\DeclareMathOperator{\Stab}{Stab}
\DeclareMathOperator{\Aut}{Aut}
\DeclareMathOperator{\rk}{rk}
\theoremstyle{plain}
\newtheorem{thm}{Theorem}[section]
\newtheorem*{thmA}{Theorem A}
\newtheorem*{thmB}{Theorem B}
\newtheorem{lem}[thm]{Lemma}
\newtheorem{pro}[thm]{Proposition}
\newtheorem{cor}[thm]{Corollary}
\theoremstyle{definition}
\newtheorem{dfn}[thm]{Definition}
\newcommand{\F}{\mathbb{F}}
\newcommand{\Z}{\mathbb{Z}}
\newcommand{\T}{\mathcal{T}}
\newcommand{\B}{\mathcal{B}}
\DeclareMathOperator{\img}{im}
\begin{document}

\title[GGS-groups: congruence quotients and Hausdorff dimension]%
{GGS-groups: order of congruence quotients and Hausdorff dimension}

\author{Gustavo A. Fern\'andez-Alcober}
\address{Matematika Saila\\ Euskal Herriko Unibertsitatea
\\ 48080 Bilbao (Spain)}
\email{gustavo.fernandez@ehu.es}

\author{Amaia Zugadi-Reizabal}
\address{Matematika Saila\\ Euskal Herriko Unibertsitatea
\\ 48080 Bilbao (Spain)}
\email{amaia.zugadi@ehu.es}

\thanks{Supported by the Spanish Government, grant
MTM2008-06680-C02-02, partly with FEDER funds, and by the
Basque Government, grant IT-460-10.
The second author is also supported by grant BFI07.95 of the
Basque Government.}

\begin{abstract}
If $G$ is a GGS-group defined over a $p$-adic tree, where $p$ is an
odd prime, we calculate the order of the congruence quotients $G_n=G/\Stab_G(n)$
for every $n$.
If $G$ is defined by the vector $\mathbf{e}=(e_1,\ldots,e_{p-1})\in\F_p^{p-1}$,
the determination of the order of $G_n$ is split into three cases,
according as $\mathbf{e}$ is non-symmetric, non-constant symmetric,
or constant.
The formulas that we obtain only depend on $p$, $n$, and the rank of the
circulant matrix whose first row is $\mathbf{e}$.
As a consequence of these formulas, we also obtain the Hausdorff dimension
of the closures of all GGS-groups over the $p$-adic tree.
\end{abstract}

\maketitle

\section{Introduction}

Subgroups of the group of automorphisms of a regular rooted tree have turned
out to be a source of many interesting examples in group theory.
Particular attention has been given to the so-called Grigorchuk groups
and to the Gupta-Sidki group, introduced in \cite{gri} and \cite{gupta-sidki},
respectively.
The second of the Grigorchuk groups and the Gupta-Sidki group are
particular instances of the family of \emph{GGS-groups} (GGS after Grigorchuk,
Gupta, and Sidki, a term coined by Gilbert Baumslag), to which this paper is devoted.
We work over the $p$-adic tree, where $p$ is an odd prime, and we determine
the order of all congruence quotients of GGS-groups; these are the automorphism
groups induced by GGS-groups on the finite trees which are obtained by truncating
the $p$-adic tree at every level.
As a consequence, we also obtain the Hausdorff dimension of the closures of
GGS-groups.

\vspace{10pt}

Before defining GGS-groups and stating our main results, it is convenient
to recall some concepts from the theory of automorphisms of rooted trees.
If $m\ge 2$ is an integer and $X=\{1,\ldots,m\}$, the $m$-adic tree $\T$
is the tree whose set of vertices is the free monoid $X^*$, where a word
$u$ is a descendant of $v$ if $u=vx$ for some $x\in X$.
If we consider only words of length $\le n$, then we have a finite tree
$\T_n$, which we refer to as \emph{the tree $\T$ truncated at level $n$\/}.
The group $\Aut\T$ of all automorphisms of $\T$ is a profinite group
with respect to the topology induced by the filtration of the level
stabilizers $\Stab(n)$, and we have $\Aut\T\cong \varprojlim_n \Aut\T_n$.
The stabilizer $\Stab(n)$ of the $n$th level of $\T$ is the normal subgroup
of $\Aut\T$ consisting of all automorphisms leaving fixed all words of
length $n$ (and, consequently, also all vertices of $\T_n$).
These stabilizers can be considered as natural congruence subgroups for $\Aut\T$.
If $G$ is a subgroup of $\Aut\T$ and we put $\Stab_G(n)=\Stab(n)\cap G$,
then we refer to the quotient $G_n=G/\Stab_G(n)$ as the
\emph{$n$th congruence quotient\/} of $G$.
Since the kernel of the action of $G$ on $\T_n$ is $\Stab_G(n)$, it
follows that $G_n$ can be naturally seen as a subgroup of $\Aut\T_n$.

\vspace{10pt}

If an automorphism $g$ fixes a vertex $u$, then the restriction of $g$ to the
subtree hanging from $u$ induces an automorphism $g_u$ of $\T$.
In particular, if $g\in\Stab(1)$ then $g_i$ is defined for every $i=1,\ldots,m$,
and we can consider the map
\[
\begin{matrix}
\psi & : & \Stab(1) & \longrightarrow & \Aut\T \times \overset{m}{\cdots}
\times \Aut\T
\\
& & g & \longmapsto & (g_1,\ldots,g_m).
\end{matrix}
\]
Clearly, $\psi$ is a group isomorphism.

On the other hand, any $g\in\Aut\T$ can be completely determined by describing how $g$
sends the descendants of every vertex $u$ to the descendants of $g(u)$.
This can be done by indicating, for every $x\in X$, the element $\alpha(x)\in X$
such that $g(ux)=g(u)\alpha(x)$.
Then $\alpha$ is a permutation of $X$, which we call the \emph{label} of
$g$ at $u$, and we denote by $g_{(u)}$.
The set of all labels of $g$ constitutes the \emph{portrait} of $g$.
Thus $g$ is determined by its portrait.
We have the following rules for labels under composition and inversion:
\begin{equation}
\label{label laws}
(fg)_{(u)}
=
f_{(u)}g_{(f(u))}
\quad
\text{and}
\quad
(f^{-1})_{(u)}
=
(f_{(f^{-1}(u))})^{-1}.
\end{equation}

\vspace{10pt}

An important automorphism of $\T$ is the automorphism that permutes the
$m$ subtrees hanging from the root rigidly according to the permutation
$(1\ 2\ \ldots \ m)$.
This is called a \emph{rooted automorphism\/} and will be denoted by the
letter $a$.
Since $a$ has order $m$, it makes sense to write $a^k$ for $k\in\Z/m\Z$.
Now, given a non-zero vector $\mathbf{e}=(e_1,\ldots,e_{m-1})\in (\Z/m\Z)^{m-1}$,
we can define recursively an automorphism $b$ of $\T$ via
\[
\psi(b) = (a^{e_1},\ldots,a^{e_{m-1}},b).
\]
We say that the subgroup $G=\langle a,b \rangle$ of $\Aut\T$ is the
\emph{GGS-group\/} corresponding to the \emph{defining vector\/} $\mathbf{e}$.
If $m=2$ then there is only one GGS-group, which is isomorphic to $D_{\infty}$,
the infinite dihedral group.
The second Grigorchuk group is obtained by choosing $m=4$ and $\mathbf{e}=(1,0,1)$,
and the Gupta-Sidki group arises for $m$ equal to an odd prime and
$\mathbf{e}=(1,-1,0,\ldots,0)$.
The groups corresponding to $\mathbf{e}=(1,0,\ldots,0)$ and arbitrary $m$
have also deserved special attention.
In the case $m=3$, this group was introduced by Fabrykowski and Gupta in \cite{fab-gupta}.
As a reference for GGS-groups, the reader can consult Section 2.3 of the monograph
\cite{branch} by Bartholdi, Grigorchuk, and \v{S}uni\'{c}, the habilitation thesis
\cite{rozh} of Rozhkov, or the papers \cite{vov} by Vovkivsky and \cite{per,per2}
by Pervova.

\vspace{10pt}

Little is known about the orders of the congruence quotients $G_n$ when $G$
is a GGS-group.
In the case that $\mathbf{e}=(1,0,\ldots,0)$ and $m=p$ is a prime,
\v{S}uni\'{c} found in \cite{sunic} that, for every $n\ge 2$,
\[
\log_p |G_n|
=
\begin{cases}
p^{n-1}+1,
&
\text{if $p$ is odd,}
\\
2^{n-2}+2,
&
\text{if $p=2$.}
\end{cases}
\]
Hence we may always assume that $m\ge 3$, as far as the problem of determining
$|G_n|$ is concerned.
To the best of our knowledge, the only other cases in which the order of $G_n$
has been determined for every $n$ correspond to $m=3$.
For the Gupta-Sidki group, Sidki himself (see \cite{sid}) proved that
\[
\log_3 |G_n| = 2 \cdot 3^{n-2} + 1,
\quad
\text{for every $n\ge 2$.}
\]
On the other hand, for $\mathbf{e}=(1,1)$, Bartholdi and Grigorchuk showed in
\cite{on parabolic} that
\[
\log_3 |G_n| = \frac{3^n+2n+3}{4},
\quad
\text{for every $n\ge 2$.}
\]

\vspace{10pt}

From now onwards, we assume that $m$ is equal to an odd prime $p$, and so
$\T$ stands for the $p$-adic tree.
The first of our main results is the determination of the order of $G_n$
for \textit{all\/} GGS-groups under this assumption.
Before giving the statement of the theorem, we introduce some notation.
Given a vector $\mathbf{a}=(a_1,\ldots,a_n)$, we write
$C(\mathbf{a})$ to denote the circulant matrix generated by $\mathbf{a}$,
i.e.\ the matrix of size $n\times n$ whose first row is $\mathbf{a}$, and
every other row is obtained from the previous one by applying a shift of
length one to the right.
In other words, the entries of $C(\mathbf{a})$ are $c_{ij}=a_{j-i+1}$,
where $a_k$ is defined for every integer $k$ by reducing $k$ modulo $n$
to a number between $1$ and $n$.
If $\mathbf{e}$ is the defining vector of a GGS-group, then we write
$C(\mathbf{e},0)$ for the circulant matrix $C(e_1,\ldots,e_{p-1},0)$
over $\F_p$.
We say that $\mathbf{e}$ is \emph{symmetric\/} if $e_i=e_{p-i}$ for all
$i=1,\ldots,p-1$.

\begin{thmA}
Let $G$ be a GGS-group over the $p$-adic tree, where $p$ is an odd prime,
and let $\mathbf{e}$ be the defining vector of $G$.
Then, for every $n\ge 2$, we have
\[
\log_p |G_n|
=
tp^{n-2}+1-\delta\,\frac{p^{n-2}-1}{p-1}
-\varepsilon\,\frac{p^{n-2}-(n-2)p+n-3}{(p-1)^2},
\]
where $t$ is the rank of the circulant matrix $C(\mathbf{e},0)$,
\begin{equation*}
\delta
=
\begin{cases}
1, & \text{if $\mathbf{e}$ is symmetric,}
\\
0, & \text{otherwise,}
\end{cases}
\qquad
\text{and}
\qquad
\varepsilon
=
\begin{cases}
1, & \text{if $\mathbf{e}$ is constant,}
\\
0, & \text{otherwise.}
\end{cases}
\end{equation*}
\end{thmA}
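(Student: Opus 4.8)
The plan is to compute $|G_n|$ by analysing the congruence quotients level by level via the embedding $\psi\colon\Stab_G(1)\hookrightarrow G\times\overset{p}{\cdots}\times G$. Since $G/\Stab_G(1)\cong\langle a\rangle$ has order $p$, we have $|G_n|=p\cdot|\Stab_G(n):\Stab_{\Stab_G(1)}(n)|^{-1}\cdot\ldots$; more precisely, writing $H=\Stab_G(1)$ and using that $\psi$ identifies $H/\Stab_H(n)$ with a subgroup of $(G_{n-1})^p$, one gets a recursion relating $|G_n|$ to $|G_{n-1}|$. The first key step is therefore to pin down the image $\psi(H)$ inside $G\times\cdots\times G$, or rather the image of $H/\Stab_H(n)$ inside $(G_{n-1})^p$; this is governed by which tuples $(g_1,\dots,g_p)$ of elements of $G$ arise, and the constraints come from the relation $\psi(b)=(a^{e_1},\dots,a^{e_{p-1}},b)$ together with the abelianization data. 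I expect the relevant invariant to be exactly the rank $t$ of the circulant matrix $C(\mathbf e,0)$: the rooted-automorphism coordinates of elements of $H'$ (or of suitable subgroups) span a subspace of $\F_p^p$ whose dimension is controlled by $t$, and the ``defect'' from the full product $(G_{n-1})^p$ is what produces the $p^{n-2}$ main term with coefficient $t$.

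The second step is to separate the three cases according to whether $\mathbf e$ is non-symmetric, non-constant symmetric, or constant, because the structure of $G$ (in particular whether $G$ is branch, weakly branch, the index $|G:G'|$, and the behaviour of $\Stab_G(1)'$ under $\psi$) differs in these cases; the symmetry of $\mathbf e$ is precisely what detects whether a certain extra automorphism normalizing $G$ exists, and constancy is the degenerate case where the Fabrykowski–Gupta-type phenomena appear. In each case I would first establish a \emph{branching-type} statement: an identification, up to finite index computed explicitly, of $\psi(\Stab_G(1))$ or of $\psi(\gamma_3(G)\cap\Stab_G(1))$ with a subdirect product of copies of a known subgroup. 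From such a statement the recursion for $\log_p|G_n|$ follows, and solving it yields the closed formula; the correction terms $\delta\,\frac{p^{n-2}-1}{p-1}$ and $\varepsilon\,\frac{p^{n-2}-(n-2)p+n-3}{(p-1)^2}$ are exactly the solutions of the inhomogeneous linear recursions $x_n=p\,x_{n-1}+\text{(constant)}$ and $x_n=p\,x_{n-1}+(n-3)$ respectively, so once the recursion is identified the arithmetic is routine.

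The base of the induction, $n=2$, must be checked by hand: one computes $G_2=G/\Stab_G(2)$ directly as a subgroup of $\Aut\T_2$, and verifies $\log_p|G_2|=t$ (for $\delta=\varepsilon=0$), $t-1$ (symmetric non-constant), or $t-1$ again adjusted (constant), matching the formula at $n=2$ where the last two terms vanish. The main obstacle, I expect, is the precise determination of the index $|\,(G_{n-1})^p:\psi(\Stab_G(n-1)\bmod\Stab_G(n))\,|$ — that is, proving that the only relations among the coordinate projections are the ``obvious'' ones coming from $\mathbf e$ and from the abelianization, with no further unexpected collapse. This requires a careful study of $\Stab_G(1)\cap G'$ and of how $G'$ sits inside $\Stab_G(1)$, and it is here that the rank $t$ enters decisively; controlling this index uniformly in $n$, and separately handling the sporadic extra constraints that force the $\delta$- and $\varepsilon$-corrections in the symmetric and constant cases, is the technical heart of the argument. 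Once this index is computed, Theorem A follows by induction on $n$, with the closed form obtained by solving the resulting linear recursion.
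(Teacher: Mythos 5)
Your outline has the right overall shape---induction on $n$ starting from $n=2$, a branching statement identifying $\psi$ of a suitable subgroup with a product of copies of a known subgroup, the rank $t$ of $C(\mathbf{e},0)$ as the governing invariant, and a case division according to symmetry and constancy of $\mathbf{e}$---and your identification of the correction terms as solutions of $x_n=px_{n-1}+1$ and $x_n=px_{n-1}+(n-3)$ is correct arithmetic. But as written this is a strategy, not a proof, and it contains one outright error and omits the parts that actually carry the difficulty. The error is in the base case: since both correction terms vanish at $n=2$, the formula asserts $\log_p|G_2|=t+1$ in \emph{all} three cases (and this is what one gets by computing $\Stab_{G_2}(1)$ as the row space of $C(\mathbf{e},0)$ in $\F_p^p$ and adjoining the cyclic factor $\langle a\rangle$); your claimed values $t$, $t-1$, and ``$t-1$ adjusted'' are wrong and moreover inconsistent with your own remark that the last two terms vanish at $n=2$.

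The deeper gap is that your uniform framework---a recursion obtained by comparing $\psi(\Stab_G(n))$ with $\Stab_G(n-1)\times\cdots\times\Stab_G(n-1)$---only works when $\mathbf{e}$ is non-constant. In that case one proves $G$ is regular branch over $\gamma_3(G)$, deduces $\psi(\Stab_G(n))=\Stab_G(n-1)\times\overset{p}{\cdots}\times\Stab_G(n-1)$ for $n\ge 3$, and reduces everything to computing $|\Stab_{G_3}(2)|$, which equals $p^{t(p-1)}$ or $p^{t(p-1)-1}$ according as $\mathbf{e}$ is non-symmetric or symmetric; the index-$p$ defect in the symmetric case is detected not by an ``extra normalizing automorphism'' as you suggest, but by an explicit linear functional built from the weight vectors (namely $g\mapsto\sum_i i r_i$ summed over coordinates), and you give no mechanism that would actually produce this defect. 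When $\mathbf{e}$ is constant the group is \emph{not} regular branch, $\psi(\Stab_G(n))$ is strictly smaller than the full product, and the recursion you posit does not arise from $\psi(\Stab_G(1))$ at all: one needs the maximal subgroup $K=\langle ba^{-1}\rangle^G$, weak branching over $K'$, the fact that the quotients $G/K'\Stab_G(n)$ are $p$-groups of maximal class of order $p^{n+1}$, and a three-factor decomposition of $|G_n|$ through $K'\Stab_G(n)$ and $L\Stab_G(n)$ with $L=\psi^{-1}(K'\times\cdots\times K')$. None of this is forecast by your plan, and without it the $\varepsilon$-correction cannot be derived. So the proposal identifies the answer's shape but does not constitute a proof of any of the three cases.
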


\vspace{10pt}

If $\sigma=(1\ 2\ \ldots \ p)$, then the automorphisms whose portrait
consists only of powers of $\sigma$ form a Sylow pro-$p$ subgroup of $\Aut\T$,
which we denote by $\Gamma$.
Observe that, under the assumption $m=p$ that we have made, all GGS-groups
are subgroups of $\Gamma$.
According to Theorem 1 of \cite{vov}, the requirement that $\mathbf{e}$
is non-zero implies that GGS-groups are infinite if $m=p$.
Since they are countable groups, they cannot be closed in the pro-$p$ group
$\Gamma$.
Our second main result is related to the Hausdorff dimension of the closures
of GGS-groups.

\vspace{10pt}

The determination of the Hausdorff dimension of closed subgroups of $\Gamma$
has received special attention in the last few years
(see \cite{abert-virag,fer-zug,siegh,sunic}).
The most natural choice is to calculate the Hausdorff dimension with respect
to the metric induced by the filtration of $\Gamma$ given by the level
stabilizers $\Stab_{\Gamma}(n)$.
In this case, it follows from a result of Abercrombie \cite{abercrombie}, and
Barnea and Shalev \cite{bar-sha}, that the Hausdorff dimension of the closure
$\overline G$ of a subgroup $G$ of $\Gamma$ is given by the following formula:
\begin{equation}
\label{dim-formula}
\dim_{\Gamma}\, \overline G
=
\liminf_{n\to\infty} \, \frac{\log_p |G_n|}{\log_p |\Gamma_n|}
=
(p-1) \liminf_{n\to\infty} \, \frac{\log_p |G_n|}{p^n}.
\end{equation}
As an immediate consequence of Theorem A, we get the Hausdorff dimension of
the closure of any GGS-group.

\begin{thmB}
Let $G$ be a GGS-group over the $p$-adic tree, where $p$ is an odd prime,
and let $\mathbf{e}$ be the defining vector of $G$.
Then
\[
\dim_{\Gamma} \overline G
=
\frac{(p-1)t}{p^2} - \frac{\delta}{p^2} - \frac{\varepsilon}{(p-1)p^2},
\]
where $t$ is the rank of the circulant matrix $C(\mathbf{e},0)$,
\begin{equation*}
\delta
=
\begin{cases}
1, & \text{if $\mathbf{e}$ is symmetric,}
\\
0, & \text{otherwise,}
\end{cases}
\qquad
\text{and}
\qquad
\varepsilon
=
\begin{cases}
1, & \text{if $\mathbf{e}$ is constant,}
\\
0, & \text{otherwise.}
\end{cases}
\end{equation*}
\end{thmB}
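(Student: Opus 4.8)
The plan is to feed the value of $\log_p|G_n|$ provided by Theorem~A directly into the Hausdorff dimension formula \eqref{dim-formula}; everything else is the evaluation of an explicit limit. First I would divide the expression of Theorem~A by $p^n$ and separate from each summand its dominant term, which gives
\[
\frac{\log_p|G_n|}{p^n}
=
\frac{t}{p^2}
+
\frac{1}{p^n}
-
\frac{\delta}{p-1}\Bigl(\frac{1}{p^2}-\frac{1}{p^n}\Bigr)
-
\frac{\varepsilon}{(p-1)^2}\Bigl(\frac{1}{p^2}-\frac{(n-2)p-(n-3)}{p^n}\Bigr).
\]

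Next I would let $n\to\infty$. Since $1/p^n\to 0$ and $\bigl((n-2)p-(n-3)\bigr)/p^n\to 0$ (an exponential beats a linear function), all the error terms vanish, so the $\liminf$ in \eqref{dim-formula} is in fact a genuine limit, equal to
\[
\frac{t}{p^2}-\frac{\delta}{(p-1)p^2}-\frac{\varepsilon}{(p-1)^2p^2}.
\]
Multiplying this by $p-1$, as \eqref{dim-formula} prescribes, produces exactly the formula in the statement, with the same case distinctions for $\delta$ and $\varepsilon$ as in Theorem~A.

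Since the argument is just the computation of the limit of an explicit sequence, there is no real obstacle; the only point deserving a little care is the bookkeeping of the lower-order terms in the $\varepsilon$-summand and the elementary observation that they are $o(1)$. Note that the liminf is applied to the full formula of Theorem~A rather than merely to its leading term, but only its leading coefficients survive the passage to the limit.
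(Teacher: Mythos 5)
Your proposal is correct and is exactly the paper's argument: the paper also derives Theorem B by substituting the formula of Theorem A into the dimension formula \eqref{dim-formula} and observing that the lower-order terms (including the linear-in-$n$ term divided by $p^n$) vanish in the limit, so the $\liminf$ is a genuine limit. The arithmetic in your displayed expressions checks out and yields the stated formula after multiplying by $p-1$.
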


\vspace{10pt}

Our proof of Theorem A relies on finding some kind of branch structure inside a
GGS-group $G$.
In particular, if $\mathbf{e}$ is not constant, we show that $G$ is regular branch
(see Section \ref{sec: non-constant} for the definition).
This result had been previously proved by Pervova and Rozhkov for \emph{periodic}
GGS-groups.
On the other hand, it is worth mentioning that the theory of $p$-groups of maximal
class plays also a crucial role in the proof of Theorem A, particularly in the case
that $\mathbf{e}$ is constant.

\vspace{10pt}

\textit{Notation.}
The $i$th row and $j$th column of a matrix $C$ are denoted by $C_i$ and $C^j$, respectively.

\section{General properties of GGS-groups}

Throughout the paper, $a$ and $b$ denote the canonical generators of a
GGS-group $G$, and $b_i=b^{a^i}$ for every integer $i$.
Note that $b_i=b_j$ if $i\equiv j\pmod p$.
The images of the elements $b_i$ under the map $\psi$ of the introduction can be
easily described:
\begin{equation}
\label{tuplak}
\begin{split}
\psi(b_0) &= (a^{e_1},a^{e_2},\ldots,a^{e_{p-1}},b),
\\
\psi(b_1) &= (b,a^{e_1},\ldots,a^{e_{p-2}},a^{e_{p-1}}),
\\
&\vdots
\\
\psi(b_{p-1}) &= (a^{e_2},a^{e_3},\ldots,b,a^{e_1}).
\end{split}
\end{equation}

We begin with some easy facts about GGS-groups.

\begin{thm}
\label{Stab(1)}
If $G=\langle a,b \rangle$ is a GGS-group, then:
\begin{enumerate}
\item
$\Stab_G(1)=\langle b \rangle^G=\langle b_0,\ldots,b_{p-1}\rangle$ and
$G=\langle a \rangle \ltimes \Stab_G(1)$.
\item
$\Stab_G(2)\le G'\le\Stab_G(1)$.
\item
$|G:G'|=p^2$ and $|G:\gamma_3(G)|=p^3$.
\end{enumerate}
\end{thm}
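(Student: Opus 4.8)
The overall strategy is to exploit the semidirect product structure coming from the rooted automorphism $a$ together with the recursive description \eqref{tuplak} of the generators $b_i$. For part (i), I would start from the fact that $G=\langle a,b\rangle$ and that $a$ is a rooted automorphism of order $p$, so $\langle a\rangle$ maps isomorphically onto $\Aut\T_1\cong\Z/p\Z$ under the action of $G$ on the first level. Consequently $G/\Stab_G(1)$ is cyclic of order $p$ generated by the image of $a$, so $|G:\Stab_G(1)|=p$ and $\langle a\rangle\cap\Stab_G(1)=1$, which already gives $G=\langle a\rangle\ltimes\Stab_G(1)$. To identify $\Stab_G(1)$ with $\langle b\rangle^G$, note that $b\in\Stab_G(1)$ (its portrait at the root is trivial by the defining relation $\psi(b)=(a^{e_1},\dots,a^{e_{p-1}},b)$), hence $\langle b\rangle^G\le\Stab_G(1)$; for the reverse inclusion, since $G=\langle a\rangle\langle b\rangle^G$ (as $\langle b\rangle^G$ is normal and together with $a$ generates $G$) and $\langle a\rangle\cap\Stab_G(1)=1$, a Dedekind/order argument forces $\Stab_G(1)=\langle b\rangle^G$. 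Finally $\langle b\rangle^G=\langle b^{a^i}:i\in\Z\rangle=\langle b_0,\dots,b_{p-1}\rangle$ because $a$ has order $p$, so conjugating by $a^i$ for $i=0,\dots,p-1$ exhausts the conjugates of $b$, and normality of this subgroup in $G$ is then automatic.

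For part (ii), the inclusion $G'\le\Stab_G(1)$ is immediate since $G/\Stab_G(1)$ is abelian (cyclic). For $\Stab_G(2)\le G'$, I would work in the abelianization: it suffices to show that every element of $\Stab_G(2)$ lies in $G'$, equivalently that the image of $\Stab_G(2)$ in $G/G'$ is trivial. Using part (i), write a general element of $\Stab_G(1)$ modulo $G'$ as a power of $b$ times commutators; more precisely, modulo $\Stab_G(1)'$ the conjugates $b_i$ all become equal to $b$, so an element $g\in\Stab_G(1)$ satisfies $g\equiv b^{k}\pmod{[\Stab_G(1),G]}$ for some $k$, and one checks via \eqref{tuplak} that the component sum $\sum e_i$ governs whether $\psi(b^k)$ fixes the second level. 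The cleanest route is: compute $\psi(g)$ for $g\in\Stab_G(1)$ in terms of its image in $G/G'$, observe that $g\in\Stab_G(2)$ iff all components $g_1,\dots,g_p$ lie in $\Stab_G(1)$, and then a short calculation with the relation $[a,b]$ and the defining vector shows this forces $g\in G'$.

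For part (iii), to get $|G:G'|=p^2$ it suffices to show $G/G'$ is generated by the images of $a$ and $b$, each of order exactly $p$, and that these images are independent. Both have order dividing $p$ since $a^p=1$ and $b^p\in\Stab_G(1)$ with $\psi(b^p)=(a^{e_1+\dots},\dots)$—actually $b^p\in G'$ by a direct check, so $b$ has order $p$ in $G/G'$; independence follows because $a\notin\Stab_G(1)$ while the image of $b$ lies in $\Stab_G(1)/G'$. Hence $|G:G'|=p^2$. For $|G:\gamma_3(G)|=p^3$, I would show $G'/\gamma_3(G)$ is cyclic of order $p$, generated by the image of $[a,b]$; this uses that $G'$ is generated modulo $\gamma_3(G)$ by the single commutator $[a,b]$ (all other basic commutators reduce to it modulo $\gamma_3$ because $G/G'$ is $2$-generated), and that $[a,b]^p\equiv 1\pmod{\gamma_3(G)}$, which one verifies by examining $\psi([a,b]^p)$ and using part (ii) together with $\mathbf{e}\ne 0$ to see the order is exactly $p$ and not $1$.

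The main obstacle is part (ii): establishing $\Stab_G(2)\le G'$ is the one place where the recursive structure genuinely has to be used rather than formal nonsense about the semidirect product, and one must be careful that the argument does not secretly depend on $\mathbf{e}$ being non-constant or on any symmetry hypothesis—it must hold for every nonzero defining vector. The delicate point is controlling the image of $\Stab_G(2)$ in $G/G'$ uniformly; I expect the computation of $\psi$ on a general coset representative, combined with the observation that $\sum_{i=1}^{p-1} e_i$ need not vanish but the discrepancy is absorbed by the $b$-component, to be exactly what makes it work.
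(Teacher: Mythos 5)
Part (i) of your plan is fine and matches what the paper does. The serious problem is part (ii). Your route reduces an element $g\in\Stab_G(1)$ modulo $[\Stab_G(1),G]$ to a power $b^k$ and then claims that ``the component sum $\sum e_i$ governs whether $\psi(b^k)$ fixes the second level''. Two things go wrong. First, discarding the commutator part is not harmless: if $g=b^ku$ with $u\in[\Stab_G(1),G]$, the first-level components of $\psi(u)$ contribute nontrivial powers of $a$, so membership of $g$ in $\Stab_G(2)$ is not detected by $\psi(b^k)$ alone. What your computation actually reduces to is this: writing $r=(r_0,\dots,r_{p-1})$ for the exponent sums of $b_0,\dots,b_{p-1}$ in a word representing $g$, one has $g\in\Stab_G(2)$ exactly when $rC=0$ with $C=C(\mathbf{e},0)$, while $g\in G'$ amounts to $r_0+\cdots+r_{p-1}=0$. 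The implication you need is therefore that the all-ones vector $\mathbf{1}$ lies in the column space of $C$, i.e.\ $\rk C=\rk(C\mid\mathbf{1})$; this is Lemma~\ref{rank circulant}(ii) of the paper, a genuinely nontrivial fact about circulant matrices over $\F_p$, proved there via the basis $\{\overline{(X-1)^i}\}$ of $\F_p[X]/(X^p-1)$ and the identity $1+X+\cdots+X^{p-1}=(X-1)^{p-1}$. Your heuristic about $\sum e_i$ cannot substitute for it: the statement must hold for every nonzero $\mathbf{e}$, including the Gupta--Sidki vector $(1,-1,0,\dots,0)$, for which $\sum e_i=0$ and $C$ is singular.

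The paper sidesteps all of this. It first proves (iii) by noting that $G/G'$ is elementary abelian of rank at most $2$ and that, since $G$ is residually a finite $p$-group, $\bigcap_i\gamma_i(G)=1$; hence if $G/G'$ were cyclic, or if $G'=\gamma_3(G)$, the lower central series would stabilize and force $G'=1$, contradicting $b^a\ne b$. This is also the step your sketch of (iii) is missing: you assert that the images of $a$ and $b$ in $G/G'$ are independent and that $[a,b]\notin\gamma_3(G)$, but neither follows from a ``direct check'' on $\psi([a,b]^p)$ or from $b$ lying in $\Stab_G(1)$ --- you must rule out $b\in G'$ and $G'=\gamma_3(G)$, and residual nilpotence (or some equivalent) is what does it. With (iii) in hand, (ii) becomes purely group-theoretic: since $G/\Stab_G(2)$ contains the two distinct subgroups $\langle a\Stab_G(2)\rangle$ and $\langle b\Stab_G(2)\rangle$ of order $p$, it is a non-cyclic finite $p$-group, so its abelianization has order at least $p^2=|G:G'|$; as $(G/\Stab_G(2))'=G'\Stab_G(2)/\Stab_G(2)$, this forces $G'\Stab_G(2)=G'$, i.e.\ $\Stab_G(2)\le G'$. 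I recommend adopting this order of argument rather than the computational one, which in effect requires developing the weight machinery of Section~2 first.
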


\begin{proof}
One can easily check the equalities in part (i).
Thus $G/\Stab_G(1)$ is cyclic and $G'\le\Stab_G(1)$.

The quotient $G/G'=\langle aG',bG' \rangle$ is elementary abelian of
order at most $p^2$.
It follows that $G'/\gamma_3(G)=\langle [a,b]\gamma_3(G) \rangle$
has order at most $p$.
If $G'=\gamma_3(G)$ then $\gamma_i(G)=G'$ for every $i\ge 3$.
On the other hand, since $G$ is residually a finite $p$-group, the
intersection of all the $\gamma_i(G)$ is trivial.
Consequently $G'=1$, which is a contradiction, since $b^a\ne b$ by
(\ref{tuplak}).
We conclude that $|G':\gamma_3(G)|=p$.
Now, if $|G:G'|\le p$ then $G/G'$ is cyclic, and $G'=\gamma_3(G)$.
Hence we necessarily have $|G:G'|=p^2$, and (iii) follows.

It only remains to prove that $N=\Stab_G(2)$ is contained in $G'$.
Since $|G:G'|=p^2$, it suffices to prove that $|G/N:(G/N)'|=p^2$.
If $|G/N:(G/N)'|\le p$ then $G/N$, being a finite $p$-group, must be
cyclic.
This is a contradiction, since $\langle aN \rangle$ and $\langle bN \rangle$
are two different subgroups of order $p$ in $G/N$.
(Note that $\langle bN \rangle$ is contained in $\Stab_G(1)/N$ while
$\langle aN \rangle$ is not.)
\end{proof}

Now if $g\in\Stab_G(1)$, it readily follows from (\ref{tuplak}) and the previous
theorem that $g_i\in G$ for all $i=1,\ldots,p$.
Thus the image of $\Stab_G(1)$ under $\psi$ is actually contained in
$G\times \overset{p}{\cdots} \times G$, and so
\begin{equation}
\label{psi on kth stabilizer}
\psi(\Stab_G(k))\subseteq \Stab_G(k-1)\times \overset{p}{\cdots} \times \Stab_G(k-1)
\end{equation}
for all $k\ge 1$.
Another important property of the map $\psi$ is the following.

\begin{pro}
\label{surjective}
If $G$ is a GGS-group, then the composition of $\psi$ with the projection on any component
is surjective from $\Stab_G(1)$ onto $G$.
\end{pro}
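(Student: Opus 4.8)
The plan is to show that $\pi_i\circ\psi\colon\Stab_G(1)\to G$ is onto, where $\pi_i$ denotes projection onto the $i$th coordinate. Since $G=\langle a,b\rangle$, it suffices to exhibit, for each $i$, elements of $\Stab_G(1)$ whose $i$th sections are $a$ and $b$ respectively. By conjugating by a suitable power of the rooted automorphism $a$, which cyclically permutes the coordinates, it is enough to treat a single value of $i$, say $i=p$; indeed conjugation by $a$ inside $G$ carries $\Stab_G(1)$ to itself and shifts coordinates, so surjectivity onto the $p$th component implies surjectivity onto every component.

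First I would produce an element with $p$th section equal to $b$: from \eqref{tuplak} we have $\psi(b_0)=(a^{e_1},\dots,a^{e_{p-1}},b)$, so $b_0\in\Stab_G(1)$ already has $(b_0)_p=b$. Next I need an element of $\Stab_G(1)$ whose $p$th section is $a$. The natural candidate is a commutator $[b_j,b_k]$ or a product of the $b_i$ with a nontrivial $a$-power appearing in the last slot; more directly, since $\mathbf{e}\ne 0$, some $e_j\ne 0$, and one can arrange an element whose $p$th coordinate is $a^{e_j}$, hence a generator of $\langle a\rangle$. Concretely, by looking at the list \eqref{tuplak} one sees that $b_i$ has $b$ in position $p-i$ (indices mod $p$) and powers of $a$ elsewhere; choosing $i$ so that position $p$ carries $a^{e_j}$ with $e_j\ne 0$ gives what we want, and raising to an appropriate power turns $a^{e_j}$ into $a$ itself. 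Thus both $a$ and $b$ lie in the image of $\pi_p\circ\psi$ restricted to $\Stab_G(1)$.

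Finally I would assemble these: the image of $\pi_p\circ\psi|_{\Stab_G(1)}$ is a subgroup of $G$ containing both $a$ and $b$, hence equals $G$, and the conjugation-by-$a$ argument promotes this to all coordinates. The only subtlety — and the step to be careful with — is the bookkeeping of which coordinate of $\psi(b_i)$ carries a $b$ versus an $a$-power, together with checking that at least one of the displayed $a$-powers in the relevant slot is a generator of $\Z/p\Z$; this is exactly where the hypothesis $\mathbf{e}\ne 0$ is used, and it is what makes the argument work uniformly for every GGS-group rather than just special ones. Everything else is formal manipulation using \eqref{tuplak}, the fact that $\Stab_G(1)$ is normal and $a$-invariant, and part (i) of Theorem \ref{Stab(1)}.
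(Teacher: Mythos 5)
Your argument is correct and is essentially the paper's own proof: both exhibit, for a fixed coordinate, one generator $b_i$ whose section there is $b$ and another $b_k$ whose section there is $a^{e_j}$ with $e_j\ne 0$, and conclude since the image of the projection is a subgroup containing $\langle a^{e_j},b\rangle=G$. (Only a bookkeeping slip: $b_i$ carries its $b$ in position $i$, not $p-i$; this does not affect the argument.)
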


\begin{proof}
Let us fix a position $i\in\{1,\ldots,p\}$, and let $j\in\{1,\ldots,p-1\}$ be such that
$e_j\ne 0$.
It follows from (\ref{tuplak}) that $\psi(b_{i-j})$ and $\psi(b_i)$ have the entries $a^{e_j}$
and $b$ in the $i$th component.
Since $G=\langle a,b \rangle=\langle a^{e_j},b \rangle$, the result follows.
\end{proof}

For every positive integer $n$, we can define an isomorphism $\psi_n$ from the stabilizer
of the first level in $\Aut\T_n$ to the direct product
$\Aut\T_{n-1}\times\overset{p}{\cdots}\times\Aut\T_{n-1}$, in the same way as $\psi$ is defined.
Since $G_n$ can be seen as a subgroup of $\Aut\T_n$, we can consider the restriction of
$\psi_n$ to $\Stab_{G_n}(1)$.
It follows from (\ref{psi on kth stabilizer}) that
\[
\psi_n(\Stab_{G_n}(k))\subseteq \Stab_{G_{n-1}}(k-1)\times \overset{p}{\cdots}
\times \Stab_{G_{n-1}}(k-1).
\]

Obviously, $G_1$ is of order $p$, generated by the image $\overline a$ of $a$.
Next we deal with $G_2$.
Let us write $\tilde g$ for the image of an element $g\in G$ in $G_2$.
Since $G_2=\langle \tilde a \rangle \ltimes \Stab_{G_2}(1)$, it suffices
to understand $\Stab_{G_2}(1)=\langle \tilde b_0,\ldots,\tilde b_{p-1} \rangle$.
Observe that $\psi_2$ sends $\Stab_{G_2}(1)$ into $G_1\times \overset{p}{\cdots} \times G_1$,
which can be identified with $\F_p^p$ under the linear map
\[
(\overline a^{i_1},\ldots,\overline a^{i_p}) \longmapsto (i_1,\ldots,i_p).
\]
This allows us to consider $\Stab_{G_2}(1)$ as a vector space over $\F_p$.

\vspace{5pt}

Before analyzing $G_2$ in the next theorem, we need the following lemma
(see Exercise 4 in Section 1 of the book \cite{berkovich}) about finite $p$-groups
of maximal class, which will be also used at some other places in the
paper.

\begin{lem}
\label{maximal class}
Let $P$ be a finite $p$-group such that $|P:P'|=p^2$.
If $P$ has an abelian maximal subgroup $A$, then $P$ is a group of maximal
class.
Furthermore, if $g_0\in P\smallsetminus A$, then:
\begin{enumerate}
\item
If $a\in A\smallsetminus \gamma_2(P)$, then $\gamma_2(P)/\gamma_3(P)$
is generated by the image of $[a,g_0]$.
\item
If $i\ge 2$ and $a\in \gamma_i(P)\smallsetminus \gamma_{i+1}(P)$,
then $\gamma_{i+1}(P)/\gamma_{i+2}(P)$ is generated by the image of $[a,g_0]$.
\end{enumerate}
\end{lem}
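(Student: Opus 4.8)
The plan is to prove the statement about finite $p$-groups of maximal class directly from the hypothesis $|P:P'|=p^2$ together with the existence of an abelian maximal subgroup $A$. First I would recall the basic dichotomy for $p$-groups with $|P:P'|=p^2$: since $P/P'$ has order $p^2$, every maximal subgroup has index $p$ and contains $P'$, so $P'=\gamma_2(P)$ has index $p^2$. Now $A$ is maximal, hence $|P:A|=p$, and $A\supseteq P'$, so $|A:P'|=p$. To show $P$ has maximal class, I would argue that $\gamma_2(P)/\gamma_3(P)$ is cyclic (this is automatic from $|P:P'|=p^2$, as in the proof of Theorem~\ref{Stab(1)}(iii): $G'/\gamma_3(G)$ is generated by a single commutator $[a,b]$) and then that $\gamma_i(P)/\gamma_{i+1}(P)$ is cyclic for all $i\ge 2$. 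The cleanest route is to show each $\gamma_i(P)$ with $i\ge 2$ is generated modulo $\gamma_{i+1}(P)$ by commutators of the form $[a,g_0,\overset{i-2}{\ldots},g_0]$; this forces all lower central factors from the second onward to have order at most $p$, and combined with $|P:P'|=p^2$ this gives $|P/\gamma_i(P)|\le p^i$, i.e. the nilpotency class is at least $\log_p|P|-1$, which is the definition of maximal class for $|P|\ge p^2$.

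The heart of the argument, and where both numbered claims come from, is the following computation with commutators. Write $P=\langle A,g_0\rangle$ since $A$ is maximal and $g_0\notin A$. For (i), take $a\in A\smallsetminus\gamma_2(P)$. Since $A$ is abelian, all commutators $[a,h]$ with $h\in A$ are trivial, so working modulo $\gamma_3(P)$ the commutator map $(x,y)\mapsto [x,y]$ is bilinear and $\gamma_2(P)/\gamma_3(P)$ is generated by $[x,g_0]\gamma_3(P)$ as $x$ ranges over generators of $P$; but only the generators outside $A$ contribute nontrivially, and modulo $\gamma_3(P)$ we may reduce any such generator to $a$ times an element of $\gamma_2(P)$, which (again by bilinearity and $\gamma_2(P)/\gamma_3(P)$ being central of exponent $p$) shows $[a,g_0]$ alone generates $\gamma_2(P)/\gamma_3(P)$. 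Here I would use that $a\notin\gamma_2(P)$ precisely guarantees that $a$ is not a ``redundant'' generator. For (ii), I would proceed by induction on $i$: given $a\in\gamma_i(P)\smallsetminus\gamma_{i+1}(P)$ with $i\ge 2$, the factor $\gamma_i(P)/\gamma_{i+1}(P)$ is cyclic of order $p$ generated by $a\gamma_{i+1}(P)$ (this is part of the maximal-class conclusion established above), and $\gamma_{i+1}(P)=[\gamma_i(P),P]$ is generated modulo $\gamma_{i+2}(P)$ by $[a,x]$ for $x$ running over a generating set of $P$; since $[a,x]=1$ for $x\in A$ (as $\gamma_i(P)\subseteq A$ — here is where I need $\gamma_2(P)\subseteq A$, which follows because $\gamma_2(P)$ has index $p^2$, is contained in the index-$p$ subgroup $A$... actually $\gamma_2(P)\le A$ holds since $A\trianglelefteq P$ with $P/A$ cyclic, so $\gamma_2(P)\le A$, and then $\gamma_i(P)\le\gamma_2(P)\le A$ for $i\ge2$, and $A$ abelian kills $[a,x]$ for $x\in A$), the only surviving generator is $[a,g_0]$.

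I expect the main obstacle to be the bookkeeping in part (i), specifically the step that lets one replace an arbitrary element of $P\smallsetminus A$ by the fixed element $a$: one has to check that modulo $\gamma_3(P)$ the commutator $[a,g_0]$ does not vanish, i.e. that $a$ genuinely contributes. This is where the hypothesis $|P:P'|=p^2$ is essential — it guarantees $|\gamma_2(P):\gamma_3(P)|=p$ (exactly as argued for $G$ in Theorem~\ref{Stab(1)}) so that once we know $\gamma_2(P)/\gamma_3(P)$ is generated by \emph{some} $[x,g_0]$ with $x\in P\smallsetminus A$, and that $xA$ and $aA$ generate the same subgroup of $P/A$, a routine manipulation using the central exponent-$p$ factor $\gamma_2(P)/\gamma_3(P)$ transfers the generation to $[a,g_0]$. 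The analogous point in (ii) is handled by the induction hypothesis, so no separate difficulty arises there. Throughout, I would lean on the identity $|P:P'|=p^2 \Rightarrow$ all lower central factors beyond the first are cyclic, which is the structural backbone of the whole lemma.
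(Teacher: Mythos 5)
The paper does not actually prove this lemma: it is quoted from Berkovich's book (Exercise 4 in Section 1 of \cite{berkovich}) and used as a black box, so there is no in-paper argument to compare against. Your self-contained proof follows the standard route and is essentially sound: $A\trianglelefteq P$ with $P/A$ cyclic gives $\gamma_2(P)\le A$ and $|A:\gamma_2(P)|=p$; bilinearity of the commutator modulo the next lower central term, together with $[A,A]=1$, shows that $a\mapsto [a,g_0]$ induces a surjective homomorphism $A/\gamma_2(P)\twoheadrightarrow\gamma_2(P)/\gamma_3(P)$ and, for $i\ge 2$, $\gamma_i(P)/\gamma_{i+1}(P)\twoheadrightarrow\gamma_{i+1}(P)/\gamma_{i+2}(P)$; this yields (i), (ii), and $|\gamma_i(P):\gamma_{i+1}(P)|\le p$ for $i\ge 2$, whence the class is $\log_p|P|-1$.

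One step is justified too weakly as written: you assert that because each $\gamma_i(P)$ is generated modulo $\gamma_{i+1}(P)$ by a single iterated commutator, the factors have ``order at most $p$.'' Cyclicity alone does not bound the order by $p$ (a factor could a priori be cyclic of order $p^2$), and you genuinely need the bound, both for the maximal-class conclusion and because statement (ii) quantifies over \emph{every} $a\in\gamma_i(P)\smallsetminus\gamma_{i+1}(P)$, which only generates the factor when that factor has order exactly $p$. The repair is already implicit in your own computation: the induced maps above are surjections from groups of order (at most) $p$, starting from $A/\gamma_2(P)\cong C_p$, so each factor has order at most $p$ by induction rather than by the ``single generator'' remark. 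With that substitution, and tidying the slightly garbled reduction in (i) (an arbitrary element of $A$ is a power of $a$ times an element of $\gamma_2(P)$, and $[a^kz,g_0]\equiv[a,g_0]^k$ modulo $\gamma_3(P)$), the argument is complete and could serve as a proof of the cited exercise.
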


\begin{thm}
\label{order of G2}
Let $G$ be a GGS-group with defining vector $\mathbf{e}$, and put $C=C(\mathbf{e},0)$.
Then:
\begin{enumerate}
\item
The dimension of $\Stab_{G_2}(1)$ coincides with the rank $t$ of $C$.
\item
$G_2$ is a $p$-group of maximal class of order $p^{t+1}$.
\end{enumerate}
\end{thm}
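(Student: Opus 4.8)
The plan is to compute the dimension of the $\F_p$-vector space $V=\Stab_{G_2}(1)$ explicitly, and then to deduce (ii) from Lemma \ref{maximal class}. First I would write down the images under $\psi_2$ of the generators $\tilde b_0,\dots,\tilde b_{p-1}$ of $V$. Reducing the tuples in (\ref{tuplak}) modulo $\Stab_G(2)$ replaces each $b$ in the last-but-$i$ slot by the trivial element of $G_1$ (since $b\in\Stab_G(1)$, its image in $G_1$ is trivial), so under the identification $G_1\times\cdots\times G_1\cong\F_p^p$ the vector $\psi_2(\tilde b_i)$ is exactly the $(i+1)$st row of the $p\times p$ matrix $C=C(\mathbf e,0)$; indeed the defining vector $(e_1,\dots,e_{p-1},0)$ shifted cyclically is precisely what appears in (\ref{tuplak}) once the $b$-entry is killed. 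Hence $V$ is the row space of $C$, and $\dim V=\rk C=t$, proving (i).

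For (ii), note $G_2=\langle\tilde a\rangle\ltimes V$ by Theorem \ref{Stab(1)}(i), and $\langle\tilde a\rangle$ has order $p$, so $|G_2|=p\cdot|V|=p^{t+1}$. The subgroup $V=\Stab_{G_2}(1)$ is abelian, because it embeds via $\psi_2$ into $G_1\times\cdots\times G_1$, a product of cyclic groups of order $p$; and it is a maximal subgroup of $G_2$ since $G_2/V\cong\langle\tilde a\rangle$ has order $p$. Finally, by Theorem \ref{Stab(1)}(iii) we have $|G:G'|=p^2$, and since $\Stab_G(2)\le G'$ by Theorem \ref{Stab(1)}(ii), reduction modulo $\Stab_G(2)$ gives $|G_2:G_2'|=p^2$. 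Thus $G_2$ satisfies the hypotheses of Lemma \ref{maximal class} — a finite $p$-group with $|G_2:G_2'|=p^2$ possessing an abelian maximal subgroup — and we conclude that $G_2$ is of maximal class.

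The one point requiring care is the translation between the recursive tuple description (\ref{tuplak}) and the statement that $\psi_2(\tilde b_i)$ equals the $i$th row of $C(\mathbf e,0)$: one must track the indexing convention for the circulant (where $c_{ij}=a_{j-i+1}$ with indices read modulo $p$) against the cyclic shifts appearing in (\ref{tuplak}), and check that the "$0$" padding in $C(\mathbf e,0)$ sits in exactly the slot where the $b$-entry was removed. This is a routine but slightly fiddly bookkeeping step; everything else is immediate from the results already established. I do not anticipate any genuine obstacle beyond getting this correspondence stated cleanly.
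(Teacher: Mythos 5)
Your proposal is correct and follows essentially the same route as the paper: identify $\psi_2(\tilde b_i)$ with the $(i+1)$st row of $C(\mathbf{e},0)$ inside $\F_p^p$ so that $\Stab_{G_2}(1)$ is the row space of $C$, then combine $|G_2|=p^{t+1}$, the equality $|G_2:G_2'|=p^2$ from Theorem \ref{Stab(1)}, and Lemma \ref{maximal class} applied to the abelian maximal subgroup $\Stab_{G_2}(1)$. The bookkeeping point you flag (the $b$-entry becoming trivial in $G_1$ and landing in the $0$ slot of $C(\mathbf{e},0)$) is handled the same way in the paper and presents no difficulty.
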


\begin{proof}
(i)
If $\tilde g\in\Stab_{G_2}(1)$ and $\psi_2(\tilde g)=(\overline a^{i_1},\ldots,\overline a^{i_p})$,
where we consider the exponents $i_1,\ldots,i_p$ as elements of $\F_p$, we define
\[
\Psi_2(\tilde g) = (i_1,\ldots,i_p)\in \F_p^p.
\]
Observe that $\Psi_2$ is injective.

By (\ref{tuplak}),
\[
\Psi_2(\tilde b_0) = (e_1,e_2,\ldots,e_{p-1},0) = (\mathbf{e},0)
\]
coincides with the first row of $C$.
Since the components of the rest of the $b_i$ are obtained by permuting cyclically
those of $b_0$, and since $C=C(\mathbf{e},0)$, it follows that $\Psi_2(\tilde b_i)$
is the ($i+1$)st row of $C$.
Thus the dimension of $\Stab_{G_2}(1)$ coincides with the dimension of the subspace
of $\F_p^p$ generated by the rows of $C$, i.e.\ with the rank $t$ of the matrix $C$.

(ii)
We have
\[
|G_2| = |G_2:\Stab_{G_2}(1)||\Stab_{G_2}(1)| = p \cdot p^t = p^{t+1}.
\]
On the other hand, it follows from (ii) and (iii) of Theorem \ref{Stab(1)} that
$|G_2:G_2'|=p^2$.
Since $\Stab_{G_2}(1)$ is an abelian maximal subgroup of $G_2$, we conclude from
Lemma \ref{maximal class} that $G_2$ is a $p$-group of maximal class.
\end{proof}

As a consequence, we can improve part (ii) of Theorem \ref{Stab(1)}.

\begin{cor}
\label{Stab(2)}
If $G$ is a GGS-group, then $\Stab_G(2)\le \gamma_3(G)$.
\end{cor}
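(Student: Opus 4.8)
The plan is to exploit the structural information about $G_2=G/\Stab_G(2)$ provided by Theorem \ref{order of G2}, namely that $G_2$ is a $p$-group of maximal class of order $p^{t+1}$, and to combine it with the index data for the lower central series of $G$ supplied by Theorem \ref{Stab(1)}(iii). Since we already know $\Stab_G(2)\le G'=\gamma_2(G)$, the corollary only sharpens this to the third term of the lower central series, and the most economical way to detect membership in $\gamma_3(G)$ is to compare the indices of $\gamma_3(G)$ and of $\gamma_3(G)\Stab_G(2)$ in $G$ and show they agree.

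First I would record the elementary arithmetic of the lower central series in a maximal class group. If $P$ is a $p$-group of maximal class of order $p^{t+1}$, then $P$ has nilpotency class $t$, one has $|P:\gamma_2(P)|=p^2$, and every subsequent factor $\gamma_i(P)/\gamma_{i+1}(P)$ with $2\le i\le t$ has order $p$; hence $|\gamma_3(P)|=p^{t-2}$ and $|P:\gamma_3(P)|=p^3$. Here one uses $t\ge 2$, which is automatic because a group of maximal class has order at least $p^3$ (it also follows from the circulant computation of Theorem \ref{order of G2}, since the polynomial associated with $C(\mathbf{e},0)$ has degree at most $p-2$ and is non-zero). Applying this with $P=G_2$ gives $|G_2:\gamma_3(G_2)|=p^3$.

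Next I would transport this index back to $G$ through the correspondence theorem. Because the lower central series of a quotient is the image of the lower central series, we have $\gamma_3(G_2)=\gamma_3(G)\Stab_G(2)/\Stab_G(2)$, and therefore the third isomorphism theorem yields $G_2/\gamma_3(G_2)\cong G/\gamma_3(G)\Stab_G(2)$. Combined with the computation above this gives $|G:\gamma_3(G)\Stab_G(2)|=p^3$. On the other hand, Theorem \ref{Stab(1)}(iii) already provides $|G:\gamma_3(G)|=p^3$.

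Finally I would conclude by comparing the two subgroups $\gamma_3(G)$ and $\gamma_3(G)\Stab_G(2)$. These satisfy $\gamma_3(G)\le\gamma_3(G)\Stab_G(2)$ and, by the previous paragraph, have the same finite index $p^3$ in $G$; hence they coincide, which forces $\Stab_G(2)\le\gamma_3(G)\Stab_G(2)=\gamma_3(G)$, as desired. There is no serious obstacle beyond careful index bookkeeping: the only points requiring attention are the correct identification of $\gamma_3(G_2)$ as $\gamma_3(G)\Stab_G(2)/\Stab_G(2)$ and the fact that two nested subgroups of equal finite index must be equal, after which the containment is immediate.
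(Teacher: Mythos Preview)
Your proof is correct and follows essentially the same approach as the paper's: both use Theorem~\ref{order of G2} to get $|G_2:\gamma_3(G_2)|=p^3$ (using $t\ge 2$), compare with $|G:\gamma_3(G)|=p^3$ from Theorem~\ref{Stab(1)}(iii), and conclude $\Stab_G(2)\le\gamma_3(G)$. Your write-up is more explicit about the correspondence $\gamma_3(G_2)=\gamma_3(G)\Stab_G(2)/\Stab_G(2)$ and the equal-index argument, but the substance is the same; one small caveat is that ``maximal class'' alone does not force $|G_2|\ge p^3$ (a group of order $p^2$ has class $1$, which is maximal), so the justification of $t\ge 2$ should rest on the circulant argument or the direct observation that two distinct cyclic shifts of $(\mathbf{e},0)$ are linearly independent.
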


\begin{proof}
Since the defining vector $\mathbf{e}$ of $G$ is different from $(0,\ldots,0)$,
it is clear that the rank $t$ of the matrix $C(\mathbf{e},0)$ is at least $2$.
It follows from the previous theorem that $G_2=G/\Stab_G(2)$ is a $p$-group of maximal
class of order greater than or equal to $p^3$.
Thus $|G_2:\gamma_3(G_2)|=p^3=|G:\gamma_3(G)|$, and consequently
$\Stab_G(2)$ is contained in $\gamma_3(G)$.
\end{proof}

We have seen in Theorem \ref{Stab(1)} that $G'\le\Stab_G(1)$.
Next we want to characterize which elements of $\Stab_G(1)$ belong to $G'$.
This goal will be achieved in Theorem \ref{G'}.
If $g\in\Stab_G(1)=\langle b_0,\ldots,b_{p-1} \rangle$, then we can write
$g$ as a word in $b_0,\ldots,b_{p-1}$, i.e.\ we can write $g=\omega(b_0,\ldots,b_{p-1})$,
where $\omega=\omega(x_0,\ldots,x_{p-1})$ is a group word in the $p$ variables
$x_0,\ldots,x_{p-1}$.

\begin{dfn}
Let $\omega$ be a group word in the variables $x_0,\ldots,x_{p-1}$, where $p$
is a prime.
Then:
\begin{enumerate}
\item
The \emph{partial $p$-weight\/} of $\omega$ with respect to a variable $x_i$,
with $0\le i\le p-1$, is the sum of the exponents of $x_i$ in the expression
for $\omega$, considered as an element of $\F_p$.
\item
The \emph{total $p$-weight\/} of $\omega$ is the sum of all its partial $p$-weights.
\end{enumerate}
\end{dfn}

It is not difficult to give examples showing that the representation of an element
$g\in\Stab_G(1)$ as a word in $b_0,\ldots,b_{p-1}$ is not unique.
Our first step towards the proof of Theorem \ref{G'} will be to see that, however, the
partial and total $p$-weights are the same for all word representations.
For this purpose, we need the following lemma.

\begin{lem}
\label{rank circulant}
Let $p$ be a prime, and let $(a_0,\ldots,a_{p-1})\in\F_p^p$ be a non-zero vector.
If $C=C(a_0,\ldots,a_{p-1})$, then:
\begin{enumerate}
\item
$\rk C=p-m$, where $m$ is the multiplicity of $1$ as a root of the polynomial
$a(X)=a_0+a_1X+\cdots+a_{p-1}X^{p-1}$.
As a consequence, we have $\rk C<p$ if and only if $\sum_{i=0}^{p-1}\, a_i=0$.
\item
If $\mathbf{1}$ represents the column vector of length $p$ with all entries
equal to $1$, then
\[
\rk C
=
\rk \left(
C \mid \mathbf{1}
\right)
.
\]
\end{enumerate}
\end{lem}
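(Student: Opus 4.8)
The plan is to exploit the well-known connection between circulant matrices over $\F_p$ and the quotient ring $R=\F_p[X]/(X^p-1)$. First I would recall that, since we are over $\F_p$ and the matrix has size $p$, we have the factorization $X^p-1=(X-1)^p$ in $\F_p[X]$, so $R$ is a local ring with maximal ideal generated by the image of $X-1$. The circulant matrix $C=C(a_0,\ldots,a_{p-1})$ represents multiplication by $a(X)=a_0+a_1X+\cdots+a_{p-1}X^{p-1}$ on $R$, with respect to the basis $1,X,\ldots,X^{p-1}$. Hence the rank of $C$ equals $\dim_{\F_p} a(X)R$, equivalently $p-\dim_{\F_p}\mathrm{Ann}_R(a(X))$. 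Writing $a(X)=(X-1)^m u(X)$ with $u(1)\ne 0$, so $u(X)$ is a unit in $R$, the ideal $a(X)R$ equals $(X-1)^mR$, which has $\F_p$-dimension $p-m$ because $1,(X-1),\ldots,(X-1)^{p-1}$ is also a basis of $R$ and $(X-1)^mR$ is spanned by $(X-1)^m,\ldots,(X-1)^{p-1}$. This gives $\rk C=p-m$, proving (i); the ``as a consequence'' clause follows since $m\ge 1$ iff $(X-1)\mid a(X)$ iff $a(1)=\sum a_i=0$, and $m=0$ iff the vector is such that $C$ is invertible.

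For part (ii), the inequality $\rk C\le \rk(C\mid \mathbf 1)$ is automatic, so the content is the reverse inequality, i.e.\ that adjoining the all-ones column does not increase the rank; equivalently, $\mathbf 1$ lies in the column space of $C$. I distinguish two cases according to part (i). If $\rk C=p$ then $C$ is invertible and there is nothing to prove. If $\rk C<p$, then by (i) we have $\sum a_i=0$, i.e.\ $a(1)=0$, so $m\ge 1$ and the column space of $C$ is exactly $(X-1)R$ under the identification above. The all-ones vector $\mathbf 1$ corresponds to the element $1+X+\cdots+X^{p-1}$ of $R$, which equals $(X^p-1)/(X-1)=(X-1)^{p-1}$ in $\F_p[X]/(X^p-1)$; this lies in $(X-1)R$ since $p-1\ge 1$. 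Therefore $\mathbf 1$ is in the column space of $C$, and $\rk(C\mid\mathbf 1)=\rk C$.

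The main obstacle — really the only place where care is needed — is setting up the ring-theoretic dictionary cleanly: one must check that $C$ is the matrix of multiplication by $a(X)$ on $R$ in the monomial basis (a direct computation with the circulant entries $c_{ij}=a_{j-i+1}$, matching the indexing convention of the paper after shifting indices to start at $0$), and that $\{(X-1)^k : 0\le k\le p-1\}$ is an $\F_p$-basis of $R$ so that the filtration by powers of the maximal ideal has the expected codimensions. Both are standard, so I would state them briefly and then read off (i) and (ii) as above. An alternative, more elementary route for (i) is to compute eigenvalues: over $\overline{\F_p}$ the circulant is diagonalized by powers of a primitive $p$-th root of unity $\zeta$, but in characteristic $p$ one has $\zeta=1$, so $a(X)$ is ``evaluated'' only with multiplicity, and one recovers the same statement via the Jordan form of the shift matrix; I find the quotient-ring argument cleaner and would present that one.
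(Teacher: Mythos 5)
Your proof is correct and takes essentially the same route as the paper: both work in $V=\F_p[X]/(X^p-1)$, use that $C$ is the matrix of multiplication by $a(X)$, pass to the basis of powers of $X-1$ to get $\rk C=p-m$, and prove (ii) via the identity $1+X+\cdots+X^{p-1}=(X-1)^{p-1}$. Two small points of precision: the column space corresponds to the ideal $(X-1)^m R$, not $(X-1)R$, but since $m\le p-1$ (as the vector is non-zero) you still have $(X-1)^{p-1}\in(X-1)^m R$, so your conclusion stands; and the row-versus-column bookkeeping you gloss over is what the paper settles by observing that ${}^tC$ is again a circulant and transposing.
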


\begin{proof}
If we consider the quotient ring $V=\F_p[X]/(X^p-1)$ as an $\F_p$-vector space, then
both
\[
\B=\{\overline 1,\overline X,\ldots,\overline{X^{p-1}}\}
\]
and
\[
\B'=\{\overline 1,\overline{X-1},\ldots,\overline{(X-1)^{p-1}}\}
\]
are bases of $V$.
Multiplication by $\overline{a(X)}$ defines a linear map $\varphi:V\rightarrow V$,
and the matrix of $\varphi$ with respect to $\B$ is $C$ (we construct the matrix by rows).
Thus $\rk C=\rk\varphi$.

On the other hand, we can write $a(X)=(X-1)^mb(X)$, with $b(X)\in\F_p[X]$ and $b(1)\ne 0$.
Let $b(X)=b_0+b_1(X-1)+\cdots+b_{k-1}(X-1)^{k-1}$, where $k=p-m$ and $b_0\ne 0$.
Then the matrix of $\varphi$ with respect to $\B'$ is the block matrix
\[
\begin{pmatrix} 0 & B\\ 0 & 0 \end{pmatrix},
\quad
\text{where}
\quad
B =
\begin{pmatrix}
b_0 & b_1 & \cdots & b_{k-2} & b_{k-1}
\\
0 & b_0 & \cdots & b_{k-3} & b_{k-2}
\\
\vdots & \vdots &  & \vdots & \vdots
\\
0 & 0 & \cdots & 0 & b_0
\end{pmatrix},
\]
since $\overline{(X-1)^i}=\overline 0$ in $V$ for all $i\ge p$.
Thus $\rk\varphi=k$, and (i) follows.

Let us now prove (ii).
We first prove that
\begin{equation}
\label{C+1 row}
\rk C = \rk \begin{pmatrix} C\\ 1 \ldots 1 \end{pmatrix}.
\end{equation}
Since $C$ is the matrix of $\varphi$ with respect to $\B$ constructed by rows,
it is clear that (\ref{C+1 row}) is equivalent to $\overline{1+X+\cdots+X^{p-1}}$
lying in the image of $\varphi$.
Note that, since we are working with coefficients in $\F_p$, we have
\[
1+X+\cdots+X^{p-1}=(X-1)^{p-1}.
\]
Since
\[
\varphi(\overline{(X-1)^{k-1}})
=
\overline{b_0(X-1)^{p-1}},
\]
and $b_0\ne 0$, it follows that $\overline{(X-1)^{p-1}}\in\img\varphi$,
as desired.

Now, since the transpose ${}^t C$ of $C$ is also a circulant matrix,
we can apply (\ref{C+1 row}) to ${}^t C$ and get
\[
\rk C = \rk {}^t C = \rk \begin{pmatrix} {}^t C \\ 1 \ldots 1 \end{pmatrix}
= \rk {}^t\!\left( C \mid \mathbf{1} \right)
= \rk \left( C \mid \mathbf{1} \right).
\]
\end{proof}

Let $g=\omega(b_0,\ldots,b_{p-1})$ be an arbitrary element of $\Stab_G(1)$, and
suppose that the partial $p$-weight of $\omega$ with respect to $x_i$ is $r_i$,
for $i=0,\ldots,p-1$.
It follows from (\ref{tuplak}) that
\begin{equation}
\label{components of word}
\psi(g) = (a^{m_1}\omega_1(b_0,\ldots,b_{p-1}),\ldots,a^{m_p}\omega_p(b_0,\ldots,b_{p-1})),
\end{equation}
where each $\omega_i$ is a word of total $p$-weight $r_i$ (and where $r_p$ is to be
understood as $r_0$), and
\begin{equation}
\label{exponent}
m_i = (r_0\ r_1\ \ldots \ r_{p-1}) C^i.
\end{equation}

\begin{thm}
\label{total weight}
Let $G$ be a GGS-group, and let $g\in\Stab_G(1)$.
Then the partial and total $p$-weights are the same for all representations of
$g$ as a word in $b_0,\ldots,b_{p-1}$.
\end{thm}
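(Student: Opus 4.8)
The plan is to extract the partial and total $p$-weights of a word representation of $g$ from intrinsic data attached to $g$, namely the way $g$ acts on the first two levels of the tree. The key observation is that, by Theorem~\ref{order of G2} and formula~(\ref{exponent}), the congruence quotient $G_2$ already ``sees'' the partial weights modulo the relations coming from the circulant matrix $C$. So the first step is to reduce the problem to the finite quotient: if $g = \omega(b_0,\ldots,b_{p-1}) = \omega'(b_0,\ldots,b_{p-1})$ are two representations, with partial $p$-weights $r_i$ and $r_i'$ respectively, then the element $\omega(x_0,\ldots,x_{p-1})\omega'(x_0,\ldots,x_{p-1})^{-1}$ evaluated at the $b_i$ equals $1$ in $G$, hence in particular in $G_2$; this element lies in $\Stab_G(1)$ and has partial weights $r_i - r_i'$. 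Thus it suffices to show: if $h = \omega(b_0,\ldots,b_{p-1}) \in \Stab_G(1)$ is \emph{trivial in $G_2$} (equivalently, lies in $\Stab_G(2)$), then all its partial $p$-weights vanish; by summing, the total $p$-weight then vanishes too.

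The second step is to identify the partial weights inside $\Psi_2$. Writing $\mathbf{r} = (r_0,r_1,\ldots,r_{p-1})$ for the vector of partial weights of $\omega$, formula~(\ref{exponent}) together with the description of $\Psi_2(\tilde b_i)$ as the $(i+1)$st row of $C$ in the proof of Theorem~\ref{order of G2}(i) shows that
\[
\Psi_2(\tilde h) = \mathbf{r}\, C,
\]
that is, $\Psi_2(\tilde h) = (m_1, m_2, \ldots, m_p)$ with $m_i = \mathbf{r}\, C^i$. Now $h$ trivial in $G_2$ means $\tilde h = 1$ in $\Stab_{G_2}(1)$, i.e.\ $\Psi_2(\tilde h) = 0$, so $\mathbf{r} C = 0$: the vector $\mathbf{r}$ lies in the left kernel of $C$. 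This is where the structure of circulant matrices enters. By Lemma~\ref{rank circulant}(i), $\operatorname{rk} C = p - m$ where $m$ is the multiplicity of $1$ as a root of $e_1 X + \cdots + e_{p-1} X^{p-1}$ (note the constant term is $0$ since $C = C(\mathbf{e},0)$), so the left kernel of $C$ need not be trivial, and the conclusion $\mathbf{r} = 0$ does \emph{not} follow from $\mathbf{r}C = 0$ alone. The main obstacle is precisely to rule out the nonzero solutions of $\mathbf{r}C = 0$ using the extra information that $\omega$ produces a genuine element of $\Stab_G(2)$, not merely of $\Stab_G(1)$ with the right first-level data.

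To close this gap, the idea is to pass one level deeper and use part~(ii) of Lemma~\ref{rank circulant}. If $h \in \Stab_G(2)$ then, by~(\ref{psi on kth stabilizer}), $\psi(h) = (h_1,\ldots,h_p)$ with each $h_i \in \Stab_G(1)$; writing each $h_i$ as a word in the $b_j$ as in~(\ref{components of word}) and comparing with~(\ref{components of word})/(\ref{exponent}) applied to $h$ itself, one finds that the total $p$-weight $r_i$ of $\omega_i$ equals the common value (by one level down, an inductive hypothesis on $n$ could be invoked, but in fact total weight at the bottom is just a sum of exponents and is manifestly well defined on the nose) and that the vanishing of $h$ one level down forces an additional linear constraint. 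Concretely: the total $p$-weight of $h$, viewed as a word in the $b_j$, is $\sum_i r_i$, and summing the identities $m_i = \mathbf{r}\,C^i$ over $i$ gives $\sum_i m_i = \mathbf{r}\,C\,\mathbf{1}$. Since $h \in \Stab_G(2) \le \Stab_G(1)$ has all first-level labels trivial, each $\omega_i$ again lies in $\Stab_G(1)$, so $r_i \equiv m_i'$ for the analogous exponent vector at the next level, and iterating (or, cleanly, an induction on the number of variables / on $p$ using that $\mathbf{r}$ is a left-kernel vector of a circulant whose symbol has constant term $0$) shows $\mathbf r$ must be orthogonal both to the columns of $C$ and to $\mathbf 1$; by Lemma~\ref{rank circulant}(ii), $\operatorname{rk}(C \mid \mathbf 1) = \operatorname{rk} C$, so $\mathbf 1$ is in the column span of $C$, and there is no nonzero left-kernel vector of $C$ that is also orthogonal to every column of $C$ unless — here one uses that $C$ restricted to its row space is nondegenerate-on-the-quotient, equivalently that the left and right kernels of a circulant coincide — which forces $\mathbf r = 0$. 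I expect this last reconciliation of the two kernel conditions to be the only genuinely delicate point; everything else is a matter of carefully bookkeeping weights through $\psi$ via~(\ref{components of word}) and~(\ref{exponent}).
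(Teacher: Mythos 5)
There is a genuine gap, and it occurs at your very first step: the reduction to the quotient $G_2$ discards exactly the information the proof needs. You replace the hypothesis ``$\omega(b_0,\ldots,b_{p-1})=1$ in $G$'' by the much weaker ``$\omega(b_0,\ldots,b_{p-1})\in\Stab_G(2)$'', and then try to prove that every element of $\Stab_G(2)$ has zero weight vector. That statement is false: by Theorem \ref{Stab(2,3)}(i), $\Stab_G(2)$ consists precisely of the elements whose weight vector $\mathbf{r}$ satisfies $\mathbf{r}C=(0\ \ldots\ 0)$, and whenever $\rk C=t<p$ (for instance for the Gupta--Sidki group, where $e_1+\cdots+e_{p-1}=0$, so Lemma \ref{rank circulant}(i) gives $t<p$) this left kernel is nonzero, so $\Stab_G(2)$ contains elements with nonzero weight vector. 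Your attempt to recover in the last paragraph does not close the gap: the ``additional linear constraint'' $\mathbf{r}\cdot\mathbf{1}=0$ is already a consequence of $\mathbf{r}C=0$, precisely because Lemma \ref{rank circulant}(ii) places $\mathbf{1}$ in the column span of $C$; so it rules out nothing new, and the assertion that a vector orthogonal to all columns of $C$ and to $\mathbf{1}$ must vanish is circular (being orthogonal to all columns of $C$ \emph{is} being in the left kernel). The conclusion $\mathbf{r}=0$ simply does not follow from those two conditions.

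What you did get right is the first half: $\mathbf{r}C=0$ together with $\rk C=\rk(C\mid\mathbf{1})$ gives $r_0+\cdots+r_{p-1}=0$, i.e.\ the \emph{total} weight of any word evaluating to $1$ is zero. The missing idea is how to upgrade this to the partial weights, and it is here that the full hypothesis must be used: if $\omega(b_0,\ldots,b_{p-1})=1$ in $G$ (not merely modulo $\Stab_G(2)$), then in the decomposition (\ref{components of word}) each component word $\omega_i$ also satisfies $\omega_i(b_0,\ldots,b_{p-1})=1$ in $G$, and $\omega_i$ has total $p$-weight equal to $r_i$. Applying the total-weight result to each $\omega_i$ yields $r_i=0$ directly --- one application per component, with no iteration and no further linear algebra. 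This is the paper's argument, and it cannot be run from the hypothesis $h\in\Stab_G(2)$, since then the component words only evaluate into $\Stab_G(1)$ rather than to the identity.
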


\begin{proof}
It suffices to see that, if $\omega$ is a word such that $\omega(b_0,\ldots,b_{p-1})=1$,
then the total $p$-weight of $\omega$ is $0$, and the partial $p$-weight $r_i$ of $\omega$
with respect to $x_i$ is equal to $0$, for every $i=0,\ldots,p-1$.
Obviously, the second assertion implies the first one, but the proof will go the other
way around.

As in (\ref{components of word}), we write
\begin{equation}
\label{word value}
\psi(\omega(b_0,\ldots,b_{p-1}))
=
(a^{m_1}\omega_1(b_0,\ldots,b_{p-1}),\ldots,a^{m_p}\omega_p(b_0,\ldots,b_{p-1})).
\end{equation}
Since this element is equal to $1$, it follows that $m_i=0$ for $i=1,\ldots,p$.
According to (\ref{exponent}), this means that
\[
(r_0\ r_1\ \ldots\ r_{p-1})C=(0\ 0\ \ldots\ 0).
\]
Now, since $\rk C=\rk (C\mid \mathbf{1})$ by Lemma \ref{rank circulant}, we also have
$(r_0\ r_1\ \ldots\ r_{p-1})\mathbf{1}=0$, that is,
\[
r_0+r_1+\cdots+r_{p-1}=0.
\]
This proves that the total $p$-weight of $\omega$ is $0$.

Now we return to (\ref{word value}).
Since $\omega(b_0,\ldots,b_{p-1})=1$ by hypothesis, then we also have
$\omega_i(b_0,\ldots,b_{p-1})=1$ for all $i=1,\ldots,p$.
Now, since the total $p$-weight of $\omega_i$ is $r_i$, it follows from the previous
paragraph that $r_i=0$.
\end{proof}

The independence of the partial and total $p$-weights from the word representation
allows us to give the following definition.

\begin{dfn}
Let $G$ be a GGS-group, and let $g\in\Stab_G(1)$.
We define the \emph{partial weight\/} of $g$ with respect to $b_i$, and the
\emph{total weight\/} of $g$, as the corresponding $p$-weights for any word
$\omega$ representing $g$.
\end{dfn}

We prefer to speak simply about weights instead of $p$-weights in the case of an element
$g\in\Stab_G(1)$, since all elements $b_i$ (with respect to which the weights are
considered) have order $p$.
Now the following result is clear.

\begin{thm}
Let $G$ be a GGS-group.
Then the maps from $\Stab_G(1)$ to $\F_p$ sending every $g\in\Stab_G(1)$ to
its partial weight with respect to one of the $b_i$ or to its total weight are
well-defined homomorphisms.
\end{thm}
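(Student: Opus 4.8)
The plan is to reduce everything to the observation, already established in Theorem \ref{total weight}, that the partial weight $r_i$ and the total weight $r_0+\cdots+r_{p-1}$ depend only on the element $g\in\Stab_G(1)$ and not on the chosen word representation. Once this is known, the maps in question are manifestly well-defined as functions $\Stab_G(1)\to\F_p$, so the only thing left to check is that they are homomorphisms.

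First I would verify the homomorphism property for the partial weight with respect to a fixed $b_i$. Given $g=\omega(b_0,\ldots,b_{p-1})$ and $h=\omega'(b_0,\ldots,b_{p-1})$, the concatenated word $\omega\omega'$ represents $gh$, and the exponent sum of $x_i$ in $\omega\omega'$ is the sum of the exponent sums in $\omega$ and in $\omega'$. Reducing modulo $p$, this says exactly that the partial weight of $gh$ with respect to $b_i$ is the sum of those of $g$ and $h$. By Theorem \ref{total weight} this computation is independent of the representatives chosen, so the map is a well-defined homomorphism into the additive group $\F_p$.

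For the total weight the argument is identical, since the total $p$-weight of a word is the sum of its partial $p$-weights and each of those is additive under concatenation; alternatively, the total weight is the composite of all $p$ partial-weight homomorphisms followed by the sum map $\F_p^{\,p}\to\F_p$, hence itself a homomorphism. In fact one could simply remark that the total weight is the sum of the $p$ partial-weight homomorphisms and is therefore automatically a homomorphism.

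There is essentially no obstacle here: the real content was already extracted in Theorem \ref{total weight}, whose proof used Lemma \ref{rank circulant} to show $\rk C=\rk(C\mid\mathbf 1)$. The present statement is a formal corollary — the only point worth stating carefully is that well-definedness (not additivity) is the substantive claim, and that has already been settled. Thus I expect the proof to consist of one or two short sentences invoking Theorem \ref{total weight} and the additivity of exponent sums under juxtaposition of words.
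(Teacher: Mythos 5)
Your proposal is correct and matches the paper's intent exactly: the paper gives no proof at all, prefacing the theorem with ``Now the following result is clear,'' precisely because well-definedness is Theorem \ref{total weight} and additivity of exponent sums under concatenation of words is immediate. Your write-up just makes these two routine observations explicit.
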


\begin{thm}
\label{G'}
Let $G$ be a GGS-group.
Then the derived subgroup $G'$ consists of all the elements of $\Stab_G(1)$ whose total
weight is equal to $0$.
\end{thm}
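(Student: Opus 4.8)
The plan is to prove the two inclusions separately, using the total‑weight homomorphism $w\colon\Stab_G(1)\to\F_p$ constructed just above. Since $w$ is a homomorphism, its kernel $K=\{g\in\Stab_G(1)\mid w(g)=0\}$ is a normal subgroup of $\Stab_G(1)$; I will show it is in fact normal in $G$ and contains $G'$, and then compare indices to force equality.

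First I would check that $G'\le K$. Because $w$ is a homomorphism into the abelian group $\F_p$, any commutator $[x,y]$ of elements of $\Stab_G(1)$ lies in $\ker w$. To handle commutators involving $a$, note that $a^{-1}b_ia=b_{i+1}$ (indices mod $p$), so $b_i$ and $b_i^a$ are both among the generators $b_0,\dots,b_{p-1}$, and each has total weight $1$; hence $[a,b_i]=b_i^{-1}b_{i+1}$ has total weight $1-1=0$, i.e.\ lies in $K$. Since $G=\langle a\rangle\ltimes\Stab_G(1)$ and $G'$ is generated as a normal subgroup by the commutators of generators, and $K$ is normalized by $a$ (conjugation by $a$ permutes the $b_i$ cyclically, hence preserves total weight) and by $\Stab_G(1)$, it follows that $G'\le K$.

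For the reverse inclusion I would compare $|{\Stab_G(1)}:K|$ with $|{\Stab_G(1)}:G'|$. On one hand, $w$ is surjective: $w(b_0)=1$. So $|{\Stab_G(1)}:K|=p$. On the other hand, by Theorem \ref{Stab(1)}(iii) we have $|G:G'|=p^2$, and since $G/\Stab_G(1)$ is cyclic of order $p$ (again Theorem \ref{Stab(1)}(i)) with $G'\le\Stab_G(1)$, the quotient $\Stab_G(1)/G'$ has order $|G:G'|/|G:\Stab_G(1)|=p^2/p=p$. Thus $G'$ and $K$ are both subgroups of $\Stab_G(1)$ of index $p$ with $G'\le K$, whence $G'=K$, which is exactly the assertion.

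The only point requiring a little care — and the place where I expect the argument could stumble if one is not careful — is the well‑definedness and homomorphism property of the total weight, together with the fact that all the $b_i$ genuinely have total weight $1$ rather than some other value; but both of these are already in hand from Theorem \ref{total weight} and the definition of weight, so the proof reduces to the bookkeeping above. A secondary subtlety is making sure that $K$ is normal in all of $G$ and not merely in $\Stab_G(1)$, which is why the computation that conjugation by $a$ permutes $\{b_0,\dots,b_{p-1}\}$ cyclically (and hence preserves total weight) is needed; this is immediate from $b_i^a=b_{i+1}$.
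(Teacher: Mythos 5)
Your proof is correct and takes essentially the same route as the paper: identify $G'$ inside the kernel of the total-weight homomorphism $\vartheta\colon\Stab_G(1)\to\F_p$, then force equality by comparing indices via $|G:G'|=p^2$ and $|G:\Stab_G(1)|=p$. If anything, you are more explicit than the paper at the one delicate point, namely why $G'\le\ker\vartheta$ (the paper asserts it directly from $\F_p$ being abelian, whereas you verify that $\ker\vartheta$ is normalized by $a$ and contains $[a,b_i]$, which is the honest justification).
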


\begin{proof}
The map $\vartheta$ sending each element of $\Stab_G(1)$ to its total weight is a
homomorphism onto the abelian group $\F_p$, and consequently $G'\le\ker\vartheta$.
Since $|G:G'|=p^2$ and $|G:\Stab_G(1)|=|\Stab_G(1):\ker\vartheta|=p$, the equality
follows.
\end{proof}

\begin{dfn}
Let $G$ be a GGS-group.
If $g\in\Stab_G(1)$ has partial weight $r_i$ with respect to $b_i$ for $i=0,\ldots,p-1$,
we say that $(r_0,\ldots,r_{p-1})\in\F_p^p$ is the \emph{weight vector\/} of $g$.
\end{dfn}

As we next see, we can analyze the subgroups $\Stab_G(2)$ and $\Stab_G(3)$ by
using the weight vector.

\begin{thm}
\label{Stab(2,3)}
Let $G$ be a GGS-group with defining vector $\mathbf{e}$, and put $C=C(\mathbf{e},0)$.
If the weight vector of $g\in\Stab_G(1)$ is $(r_0,\ldots,r_{p-1})$, then:
\begin{enumerate}
\item
We have $g\in\Stab_G(2)$ if and only if $(r_0\ \ldots \ r_{p-1})C=(0\ \ldots \ 0)$.
\item
If $g\in\Stab_G(3)$ then $(r_0,\ldots,r_{p-1})=(0,\ldots,0)$.
\end{enumerate}
\end{thm}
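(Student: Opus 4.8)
The plan is to read both assertions directly off the coordinates of $\psi(g)$, using the weight bookkeeping already set up. Write $g=\omega(b_0,\ldots,b_{p-1})$ and let $(r_0,\ldots,r_{p-1})$ be its weight vector, which is a genuine invariant of $g$ by Theorem~\ref{total weight}. For part (i) I would invoke (\ref{components of word}) and (\ref{exponent}) to get $\psi(g)=(a^{m_1}\omega_1(b_0,\ldots,b_{p-1}),\ldots,a^{m_p}\omega_p(b_0,\ldots,b_{p-1}))$ with $m_i=(r_0\ \ldots\ r_{p-1})C^i$. Each $\omega_i(b_0,\ldots,b_{p-1})$ already lies in $\Stab_G(1)$, being a word in the $b_j$, so the $i$th section of $g$ lies in $\Stab_G(1)$ exactly when $a^{m_i}\in\Stab_G(1)$; and since $G=\langle a\rangle\ltimes\Stab_G(1)$ with $a$ of order $p$ (Theorem~\ref{Stab(1)}(i)), this holds precisely when $m_i=0$ in $\F_p$. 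By (\ref{psi on kth stabilizer}), $g\in\Stab_G(2)$ if and only if every section of $g$ lies in $\Stab_G(1)$, i.e.\ $m_i=0$ for all $i$; and $m_i=0$ for $i=1,\ldots,p$ is exactly the condition $(r_0\ \ldots\ r_{p-1})C=(0\ \ldots\ 0)$.

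For part (ii), suppose $g\in\Stab_G(3)\subseteq\Stab_G(2)$. Part (i) then forces $m_i=0$ for all $i$, so $\psi(g)=(\omega_1(b_0,\ldots,b_{p-1}),\ldots,\omega_p(b_0,\ldots,b_{p-1}))$ with no surviving $a$-powers. Applying (\ref{psi on kth stabilizer}) with $k=3$, each section $\omega_i(b_0,\ldots,b_{p-1})$ lies in $\Stab_G(2)$, hence in $G'$ by Theorem~\ref{Stab(1)}(ii), and therefore has total weight $0$ by Theorem~\ref{G'}. On the other hand, as noted right after (\ref{components of word}), $\omega_i$ has total $p$-weight $r_i$ with the convention $r_p=r_0$, so the total weight of $\omega_i(b_0,\ldots,b_{p-1})$ equals $r_i$ for $1\le i\le p-1$ and equals $r_0$ for $i=p$. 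As $i$ runs over $1,\ldots,p$ these values run over all of $r_0,\ldots,r_{p-1}$, so every $r_i$ vanishes, which is the claim.

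The proof is short because the substantive work is already in place: the weight vector is an invariant of $g$ (Theorem~\ref{total weight}), $G'$ is cut out by the total-weight homomorphism (Theorem~\ref{G'}), and both ultimately rest on the rank identity $\rk C=\rk(C\mid\mathbf 1)$ of Lemma~\ref{rank circulant}. The only place that needs care — and the step I expect to be the main (mild) obstacle — is the cyclic re-indexing: one must verify that the $i$th section of $g$ ``sees'' the partial weight $r_i$ of $g$ (with position $p$ seeing $r_0$), so that the multiset of section total-weights is precisely $\{r_0,\ldots,r_{p-1}\}$ and each $r_i$ is accounted for exactly once. An alternative to citing Theorem~\ref{G'} in part (ii) is to apply part (i) to each section and then use Lemma~\ref{rank circulant}(ii) to pass from $(s_0\ \ldots\ s_{p-1})C=0$ to $s_0+\cdots+s_{p-1}=0$; either route closes the argument.
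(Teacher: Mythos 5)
Your proof is correct and follows essentially the same route as the paper: part (i) is read off from (\ref{components of word}) and (\ref{exponent}) exactly as the paper does, and part (ii) likewise reduces to the observation that each section of $g$ lies in $\Stab_G(2)$ and has total weight $r_i$. The only cosmetic difference is that in part (ii) you conclude that each section has total weight $0$ via $\Stab_G(2)\le G'$ together with Theorem~\ref{G'}, whereas the paper applies part (i) to the section and then Lemma~\ref{rank circulant}(ii) --- precisely the alternative you note at the end; both are equivalent one-line deductions resting on the same rank identity.
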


\begin{proof}
(i)
If we write $\psi(g)$ as in (\ref{components of word}), then $g\in\Stab_G(2)$ if
and only if $m_i=0$ in $\F_p$ for every $i=1,\ldots,p$.
Now, by (\ref{exponent}), this is equivalent to the condition
$(r_0\ \ldots \ r_{p-1})C=(0\ \ldots \ 0)$.

(ii)
Again we use the expression in (\ref{components of word}).
If $g\in\Stab_G(3)$ then $\omega_i(b_0,\ldots,b_{p-1})$\break
$\in\Stab_G(2)$ for all $i=1,\ldots,p$.
As mentioned above, $\omega_i(b_0,\ldots,b_{p-1})$ is an element of total weight $r_i$.
Let $(s_0,\ldots,s_{p-1})$ be the weight vector of this element, so that $r_i=s_0+\cdots+s_{p-1}$.
Then, by (i), we have $(s_0\ \ldots \ s_{p-1})C=(0\ \ldots \ 0)$.
Since $\rk C=\rk (C\mid \mathbf{1})$ by Lemma \ref{rank circulant}, it follows that
$r_i=s_0+\cdots+s_{p-1}=0$, as desired.
\end{proof}

One may wonder whether the converse holds in (ii) of the previous theorem,
i.e.\ if the weight vector of an element is $(0,\ldots,0)$, does it lie
in $\Stab_G(3)$?
We make things clearer in the following theorem.

\begin{thm}
\label{Stab(1)'}
Let $G$ be a GGS-group.
Then $\Stab_G(1)'$ consists of all elements of $\Stab_G(1)$ whose weight vector is
$(0,\ldots,0)$.
Furthermore, we have $|G:\Stab_G(1)'|=p^{p+1}$.
\end{thm}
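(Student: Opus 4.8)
The plan is to identify $\Stab_G(1)'$ as the kernel of the ``weight vector'' homomorphism, and then to compute its index. For the first assertion, let $\omega\colon\Stab_G(1)\to\F_p^p$ be the map sending $g$ to its weight vector $(r_0,\ldots,r_{p-1})$; by the theorem preceding the definition of weight vector, each coordinate is a homomorphism to $\F_p$, so $\omega$ is a homomorphism into the abelian group $\F_p^p$. Hence $\Stab_G(1)'\le\ker\omega$. For the reverse inclusion I would argue that $\Stab_G(1)/\Stab_G(1)'$ is generated by the images of $b_0,\ldots,b_{p-1}$, whose weight vectors are the standard basis vectors $e_0,\ldots,e_{p-1}$ of $\F_p^p$ (since $b_i$ has partial weight $1$ with respect to $b_i$ and $0$ with respect to the other $b_j$, by the very definition of weight). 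Therefore the composite $\Stab_G(1)/\Stab_G(1)'\twoheadrightarrow\Stab_G(1)/\ker\omega\hookrightarrow\F_p^p$ sends a generating set to a basis, which forces $\Stab_G(1)/\ker\omega$ to be all of $\F_p^p$ and the first map to be an isomorphism; in particular $\ker\omega=\Stab_G(1)'$, and along the way we get $|\Stab_G(1):\Stab_G(1)'|=p^p$.

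To make the reverse inclusion argument rigorous one must know that $\Stab_G(1)^{\mathrm{ab}}$ is an $\F_p$-vector space, i.e.\ that $\Stab_G(1)$ has exponent dividing $p$ modulo its derived subgroup; equivalently, that $b_i^p\in\Stab_G(1)'$ for each $i$. This is where I expect the one genuine subtlety to lie: a direct computation of $b_i^p$ via $\psi$ shows $\psi(b_0^p)=(a^{e_1}\cdots,\ldots)$ with each entry a product of the $a^{e_j}$'s and one $b$, so that $b_0^p\in\Stab_G(2)\le\Stab_G(1)$; one then checks that its weight vector is $(e_1+\cdots+e_{p-1},\ldots)$ — but actually the cleanest route is to observe that $b_i$ has order $p$ (each $b_i$ is conjugate to $b$, and $b$ has order $p$ because $\psi(b)=(a^{e_1},\ldots,a^{e_{p-1}},b)$ shows $b^p$ fixes level $1$ and acts trivially on each subtree by induction), so $b_i^p=1\in\Stab_G(1)'$ trivially. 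With all $b_i$ of order $p$, the quotient $\Stab_G(1)/\Stab_G(1)'$ is generated by $p$ elements of order dividing $p$, hence is an $\F_p$-space of dimension at most $p$, and the weight-vector map shows the dimension is exactly $p$.

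Finally, for the index formula I would combine this with Theorem~\ref{Stab(1)}: since $|G:\Stab_G(1)|=p$ and $|\Stab_G(1):\Stab_G(1)'|=p^p$, we get
\[
|G:\Stab_G(1)'| = |G:\Stab_G(1)|\cdot|\Stab_G(1):\Stab_G(1)'| = p\cdot p^p = p^{p+1},
\]
as claimed. The only step requiring real care is pinning down that $\Stab_G(1)'$ is exactly the kernel rather than merely contained in it; everything else is bookkeeping with the weight homomorphisms already established in the preceding theorems. One should also double-check that the $b_i$ genuinely have order $p$ in $G$ (not just in some quotient), which follows from the recursive definition of $b$ together with the faithfulness of the action on $\T$.
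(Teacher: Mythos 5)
Your proposal is correct and follows essentially the same route as the paper: the weight-vector map is a surjective homomorphism onto $\F_p^p$ (so $\Stab_G(1)'$ lies in its kernel, which has index $p^p$), while $\Stab_G(1)=\langle b_0,\ldots,b_{p-1}\rangle$ with each $b_i$ of order $p$ forces $|\Stab_G(1):\Stab_G(1)'|\le p^p$, giving equality; the index formula then follows from $|G:\Stab_G(1)|=p$. Your extra care in justifying that each $b_i$ has order $p$ (via $\psi(b^p)=(1,\ldots,1,b^p)$ and faithfulness of the action) is a point the paper takes for granted, but the argument is the same.
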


\begin{proof}
The map $\rho$ which sends every element of $\Stab_G(1)$ to its weight vector is a
homomorphism onto $\F_p^p$.
Thus $|\Stab_G(1):\ker\rho|=p^p$.
Since $\F_p^p$ is abelian, it follows that $\Stab_G(1)'\le\ker\rho$.
On the other hand, since $\Stab_G(1)=\langle b_0,\ldots,b_{p-1} \rangle$ and every
$b_i$ has order $p$, we have $|\Stab_G(1):\Stab_G(1)'|\le p^p$.
Hence $\ker\rho=\Stab_G(1)'$ and $|\Stab_G(1):\Stab_G(1)'|=p^p$.
Since $|G:\Stab_G(1)|=p$, we are done.
\end{proof}

In particular, we have $\Stab_G(3)\le \Stab_G(1)'$.
Once we prove Theorem A, it will follow that $|G:\Stab_G(3)|=p^{tp+1-\delta}$,
where $t$ is the rank of $C(\mathbf{e},0)$ and $\delta$ is $1$ or $0$, according
as $\mathbf{e}$ is symmetric or not.
Since $t$ is always at least $2$, we have $|G:\Stab_G(3)|>p^{p+1}$ in every case.
Hence $\Stab_G(3)$ is always a proper subgroup of $\Stab_G(1)'$, and the converse
of (ii) in Theorem \ref{Stab(2,3)} never holds.

\vspace{10pt}

Next we prove a result which will allow us to reduce, for the calculation of
the order of congruence quotients and of the Hausdorff dimension, to the case
of GGS-groups with defining vectors of the form $\mathbf{e}=(1,e_2,\ldots,e_{p-1})$.
We need the following lemma.

\begin{lem}
\label{permutation}
Let $p$ be a prime, and let $\sigma=(1\ 2\ \ldots \ p)$.
Assume that $\alpha\in S_p$ satisfies the following two conditions:
\begin{enumerate}
\item
$\alpha$ normalizes the subgroup $\langle \sigma \rangle$.
\item
$\alpha(p)=p$.
\end{enumerate}
Then, for every $i=1,\ldots,p-1$, if $\alpha(i)=j$ we have
$\alpha(p-i)=p-j$.
\end{lem}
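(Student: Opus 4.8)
The plan is to exploit condition (i) to pin down $\alpha$ completely up to a small ambiguity, and then use condition (ii) to resolve that ambiguity. First I would recall the structure of the normalizer $N_{S_p}(\langle\sigma\rangle)$: since $\langle\sigma\rangle$ acts regularly on $\{1,\dots,p\}$, identifying this set with $\Z/p\Z$ via $i\mapsto i-p$ (so that $p$ corresponds to $0$ and $\sigma$ becomes $x\mapsto x+1$), the normalizer is exactly the group of affine maps $x\mapsto ux+v$ with $u\in(\Z/p\Z)^{\times}$ and $v\in\Z/p\Z$; this is a standard fact (it is the holomorph of $\Z/p\Z$). Thus condition (i) says $\alpha$ corresponds to some affine map $x\mapsto ux+v$.

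Next I would use condition (ii): $\alpha(p)=p$ translates, under the identification above, to $\alpha(0)=0$, which forces $v=0$. Hence $\alpha$ is the \emph{linear} map $x\mapsto ux$ on $\Z/p\Z$ for some unit $u$. Now the conclusion is immediate: for $i\in\{1,\dots,p-1\}$, the element $i$ corresponds to $i-p\equiv i\pmod p$ and $p-i$ corresponds to $-i\pmod p$; so if $\alpha(i)=j$, meaning $ui\equiv j-p\equiv j\pmod p$, then $\alpha$ sends the residue $-i$ to $u(-i)=-ui\equiv -j\pmod p$, and the point of $\{1,\dots,p\}$ with residue $-j$ is exactly $p-j$. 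Therefore $\alpha(p-i)=p-j$, as required.

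The only genuine input here is the description of $N_{S_p}(\langle\sigma\rangle)$ as the affine group $\mathrm{AGL}(1,p)$; everything after that is bookkeeping with the identification $\{1,\dots,p\}\leftrightarrow\Z/p\Z$. If one prefers to avoid quoting the normalizer structure, an equally short argument works directly: condition (i) gives $\alpha\sigma\alpha^{-1}=\sigma^{u}$ for some $u\not\equiv0\pmod p$, so by (\ref{label laws})-style conjugation one checks $\alpha(k+1)\equiv\alpha(k)+u$ in residues modulo $p$ for all $k$, whence $\alpha(k)\equiv\alpha(p)+u(k-p)\pmod p$; then (ii) kills the constant term and one concludes exactly as above. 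I expect no real obstacle; the main point to state carefully is the translation between the two labellings of the $p$ points so that "$p-i$" and "$-i$" are correctly matched.
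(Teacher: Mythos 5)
Your proof is correct and follows essentially the same route as the paper: identify $\{1,\ldots,p\}$ with $\F_p$ sending $p$ to $0$, invoke the standard fact that $N_{S_p}(\langle\sigma\rangle)$ is the affine group, use $\alpha(p)=p$ to kill the translation part, and conclude by linearity that $\alpha(-i)=-\alpha(i)$. The paper cites Lemma 14.1.2 of Cox's \emph{Galois Theory} for the normalizer description, exactly the ``genuine input'' you flag.
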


\begin{proof}
If we think of $S_p$ as the set of permutations of the field $\F_p$,
then $\sigma$ corresponds to the map $\ell\mapsto \ell+1$, and the
normalizer of $\langle \sigma \rangle$ in $S_p$ corresponds to the affine
group over $\F_p$ (see Lemma 14.1.2 of \cite{cox}).
Thus $\alpha(\ell)=a\ell+b$ for some $a\in\F_p^{\times}$ and $b\in\F_p$.
Since $\alpha(p)=p$, it follows that $b=0$, and so $\alpha(\ell)=a\ell$
for every $\ell\in\F_p$.
Hence $\alpha$ is a linear map and, as a consequence,
\[
\alpha(p-i) = \alpha(-i) = -\alpha(i) = -j = p-j.
\]
\end{proof}

We say that an automorphism $f$ of $\T$ has \emph{constant portrait\/}
if $f$ has the same label at all vertices of $\T$.
By formula (\ref{label laws}) for the labels of a composition,
the set of all automorphisms of constant portrait is a subgroup of $\Aut\T$.

\begin{thm}
\label{reduction}
Let $G$ be a GGS-group with defining vector $\mathbf{e}=(e_1,\ldots,e_{p-1})$,
and assume that $e_k\ne 0$.
Then there exists $f\in\Aut\T$ of constant portrait such that
$L=G^f$ is a GGS-group whose defining vector
$\mathbf{e}'=(e_1',\ldots,e_{p-1}')$ satisfies:
\begin{enumerate}
\item
$\mathbf{e}'$ is a permutation of the vector $\mathbf{e}/e_k$, that is,
there exists $\alpha\in S_{p-1}$ such that $e_i'=e_{\alpha(i)}/e_k$ for
all $i=1,\ldots,p-1$.
\item
$\alpha(1)=k$, and so $e_1'=1$.
\item
If $\alpha(i)=j$ then $\alpha(p-i)=p-j$.
In other words, two values which are placed in symmetric positions of
$\mathbf{e}$ are moved (after division by $e_k$) to symmetric positions
of $\mathbf{e}'$.
Thus $\mathbf{e}'$ is symmetric if and only if $\mathbf{e}$ is.
\item
$\rk C(\mathbf{e},0)=\rk C(\mathbf{e}',0)$.
\end{enumerate}
Furthermore, we have $|G_n|=|L_n|$ for every $n$, and
$\dim_{\Gamma} \overline G=\dim_{\Gamma} \overline L$.
\end{thm}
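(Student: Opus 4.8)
The plan is to construct $f$ as an automorphism of constant portrait whose common label is a suitable permutation $\alpha\in S_p$, and then to read off the effect of conjugation by $f$ on the defining data of $G$. First I would choose the permutation. Since we want to send the nonzero entry $e_k$ into the first position and normalize the cyclic group generated by $\sigma=(1\ 2\ \ldots\ p)$ (so that conjugates of rooted automorphisms are again rooted automorphisms and $L^f$ stays inside $\Gamma$), the natural choice is $\alpha\in S_p$ with $\alpha(p)=p$ that normalizes $\langle\sigma\rangle$ and sends $k\mapsto 1$; concretely, viewing $S_p$ as permutations of $\F_p$, take $\alpha(\ell)=k^{-1}\ell$. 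Let $f$ be the automorphism of $\T$ with constant portrait equal to $\alpha$ at every vertex; by the remark preceding the theorem, such an $f$ exists and lies in $\Aut\T$.

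Next I would compute $a^f$ and $b^f$. Because $\alpha$ normalizes $\langle\sigma\rangle$ and $a$ is the rooted automorphism attached to $\sigma$, conjugating $a$ by the constant-portrait automorphism $f$ yields the rooted automorphism attached to $\alpha^{-1}\sigma\alpha$, which is a power of $\sigma$; this shows $a^f\in\langle a\rangle$ and, after possibly replacing $a$ by a generating power, we may assume $a^f=a$ (or simply carry the power through — it does not affect the group $L$). For $b$, I would use the recursion $\psi(b)=(a^{e_1},\ldots,a^{e_{p-1}},b)$ together with the composition law \eqref{label laws} and the fact that $f$ has the \emph{same} label $\alpha$ at the root and at all first-level vertices. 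A direct computation with $\psi$ then gives $\psi(b^f)=(h_{\alpha^{-1}(1)},\ldots,h_{\alpha^{-1}(p)})$ where $h_i$ is the $i$-th entry of $\psi(b^f)$ obtained from $\psi(b)$ by permuting coordinates by $\alpha$ and conjugating each rooted entry $a^{e_i}$ to $a^{e_i/e_k}$ (after the normalization above), while the self-referential entry stays $b^f$. In other words $\psi(b^f)=(a^{e'_1},\ldots,a^{e'_{p-1}},b^f)$ with $e'_i=e_{\alpha(i)}/e_k$, so $L=G^f=\langle a,b^f\rangle$ is again a GGS-group with defining vector $\mathbf{e}'$ as described in (i), and (ii) holds since $\alpha(1)=k$ forces $e'_1=e_k/e_k=1$. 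Item (iii) is then immediate from Lemma \ref{permutation} applied to our $\alpha$ (which satisfies its two hypotheses), and it yields in particular that symmetry of $\mathbf{e}'$ is equivalent to symmetry of $\mathbf{e}$.

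For (iv) I would argue as follows: passing from $\mathbf{e}$ to $\mathbf{e}'$ amounts to scaling the vector $(\mathbf{e},0)$ by the nonzero scalar $e_k^{-1}$ and permuting its first $p-1$ coordinates by $\alpha$ while fixing the last; scaling does not change the rank of a circulant matrix, and a simultaneous permutation of rows and columns of $C(\mathbf{e},0)$ by the extension of $\alpha$ fixing $p$ (which, because $\alpha$ normalizes $\langle\sigma\rangle$, is exactly the kind of permutation that carries circulant matrices to circulant matrices) gives $C(\mathbf{e}',0)$ up to conjugation by a permutation matrix, hence the same rank. Finally, the last two claims are formal: conjugation by $f$ is a topological automorphism of $\Gamma$ (it is conjugation by an element of $\Aut\T$, and one checks $f$ normalizes $\Gamma$ since its label $\alpha$ normalizes $\langle\sigma\rangle$, so $\Stab_\Gamma(n)^f=\Stab_\Gamma(n)$), so it carries $\Stab_G(n)$ onto $\Stab_L(n)$, induces isomorphisms $G_n\cong L_n$ for every $n$, and therefore preserves both the orders $|G_n|=|L_n|$ and, via formula \eqref{dim-formula}, the Hausdorff dimension.

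The main obstacle I anticipate is the careful bookkeeping in computing $\psi(b^f)$: one must track simultaneously how $f$ permutes the first-level subtrees (via its root label $\alpha$), how it acts \emph{within} each subtree (again via $\alpha$, since the portrait is constant), and how these two effects combine through \eqref{label laws} so that the self-referential coordinate of $\psi(b)$ lands back in the last position — this is what forces the condition $\alpha(p)=p$ and is precisely where the hypothesis $e_k\ne 0$ and the choice $\alpha(k)=1$ get used. Once this computation is set up cleanly, everything else reduces to Lemma \ref{permutation}, elementary linear algebra on circulant matrices, and the functoriality of congruence quotients under conjugation.
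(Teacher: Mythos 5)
Your proposal follows essentially the same route as the paper's proof: a constant-portrait conjugator whose label normalizes $\langle \sigma \rangle$, fixes $p$, and sends $k$ to $1$; Lemma \ref{permutation} for part (iii); row and column permutations of the circulant matrix for part (iv); and conjugacy together with (\ref{dim-formula}) for the final claims. Two bookkeeping slips should be fixed: the concrete map $\ell\mapsto k^{-1}\ell$ that you call $\alpha$ is the \emph{label} of $f$ (the paper's $\beta$), whereas the $\alpha$ of the statement must be its inverse $\ell\mapsto k\ell$, since otherwise $\alpha(1)=k$ is false; and conjugation by $f$ multiplies all exponents by the $r$ defined by $\sigma^{\beta}=\sigma^{r}$ rather than dividing them by $e_k$, so the normalization $e_1'=1$ is achieved not by adjusting $a$ but by replacing $b^f$ with the power $(b^f)^{\ell}$ where $\ell=(re_k)^{-1}$ --- a step your write-up leaves implicit.
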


\begin{proof}
Observe that there exists a permutation $\beta\in S_p$, in fact only one, that normalizes
the subgroup $\langle \sigma \rangle$ and such that $\beta(k)=1$ and $\beta(p)=p$.
Indeed, since $\sigma^{\beta}=(\beta(1) \ \ldots \beta(p))$ and
the positions of $1$ and $p$ are already fixed in this last tuple, there is only one way
to choose the rest of the images of $\beta$ if we want to obtain a power of $\sigma$.
Let $r$ be defined by the condition $\sigma^{\beta}=\sigma^r$, and set $\alpha=\beta^{-1}$.
Note that $\alpha(1)=k$ and that, by Lemma \ref{permutation}, if $\alpha(i)=j$ then
$\alpha(p-i)=p-j$.

Now we define an automorphism $f$ of $\T$ by choosing the labels at all vertices of $\T$
equal to $\beta$.
We claim that $L=G^f$ satisfies the properties of the statement of the theorem.
We have
\[
(g^f)_{(v)}=\beta^{-1} g_{(f^{-1}(v))} \beta
\]
for every $g\in G$ and every vertex $v$ of the tree.
It readily follows that $a^f=a^r$.
We now consider $c=b^f$.
Let $S$ be the set of all vertices of the form $p\overset{n}{\ldots}pi$,
where $n\ge 0$ and $1\le i\le p-1$.
If $v\in S$, then we have
$f(v)=p\overset{n}{\ldots}p\beta(i)$, and consequently
$f^{-1}(v)=p\overset{n}{\ldots}p\alpha(i)$.
Thus
\[
c_{(v)}
=
\beta^{-1} b_{(p\ldots p\alpha(i))} \beta
=
(\sigma^{e_{\alpha(i)}})^{\beta}
=
\sigma^{re_{\alpha(i)}}
\]
in this case.
On the other hand, if $v\not\in S$, then also $f^{-1}(v)\not\in S$, and
so we have $b_{(f^{-1}(v))}=1$ and $c_{(v)}=1$.
Thus $c$ is the automorphism given by the recursive relation
\[
\psi(c) = (a^{re_{\alpha(1)}},\ldots,a^{re_{\alpha(p-1)}},c).
\]
Now, let $\ell$ be the inverse of $re_{\alpha(1)}$ modulo $p$, and put $b'=c^{\ell}$.
Then $L=\langle a,b' \rangle$, where $b'$ is the automorphism defined by
\[
\psi(b') = (a^{e'_1},\ldots,a^{e'_{p-1}},b'),
\]
i.e.\ $L$ is the GGS-group with defining vector $\mathbf{e}'$.
This proves (i), (ii), and (iii).

Let us now check (iv).
If $C=C(\mathbf{e},0)$, $C'=C(\mathbf{e}',0)$ and we define $e_p=0$, then
\[
c'_{ij}
=
e_{\alpha(j-i+1)}/e_k
=
e_{\alpha(j)-\alpha(i)+\alpha(1)}/e_k
=
c_{\alpha(i)-\alpha(1)+1,\alpha(j)}/e_k,
\]
since we know that $\alpha$ is a homomorphism by the proof of Lemma \ref{permutation}.
(Here, all indices are taken modulo $p$ between $1$ and $p$.)
By observing that the maps $i\mapsto \alpha(i)-\alpha(1)+1$ and $j\mapsto \alpha(j)$
are permutations of $\F_p$, we conclude that $\rk C=\rk C'$.

Finally, note that, since $G$ and $L$ are conjugate, we clearly have $|G_n|=|L_n|$,
and then by (\ref{dim-formula}), also $\dim_{\Gamma} \overline G=\dim_{\Gamma} \overline L$.
\end{proof}

We want to stress the fact that the automorphism $f$ conjugating $G$ to $L$ in
the previous theorem has constant portrait.
This has nice consequences, such as the following one.

\begin{pro}
\label{conjugating}
Let $J$ and $K$ be two subgroups of $\Aut\T$, where $J$ is contained in $\Stab(1)$.
If $f\in\Aut\T$ has constant portrait, then we have
\[
K\times \overset{p}{\cdots} \times K\subseteq \psi(J)
\]
if and only if
\[
K^f\times \overset{p}{\cdots} \times K^f\subseteq \psi(J^f).
\]
\end{pro}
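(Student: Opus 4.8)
\emph{Proof strategy.} The plan is to show that, through the isomorphism $\psi$, conjugation by $f$ becomes nothing more than ``permute the $p$ coordinates and then conjugate each of them by $f$''. Since this operation treats all coordinates alike, it visibly carries the diagonal set $K\times\overset{p}{\cdots}\times K$ onto $K^f\times\overset{p}{\cdots}\times K^f$, which is exactly what the statement asks for.

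Write $\beta$ for the common label of $f$ at all vertices of $\T$. A straightforward induction on the length of the vertex shows that $f$ acts ``diagonally'', sending $x_1x_2\cdots x_n$ to $\beta(x_1)\beta(x_2)\cdots\beta(x_n)$; in particular $f$ permutes the $p$ subtrees hanging from the root according to $\beta\in S_p$, and the section of $f$ at every vertex is again $f$. By (\ref{label laws}) the same applies to $f^{-1}$, with $\beta^{-1}$ in place of $\beta$. Also, $\Stab(1)$ is normal in $\Aut\T$, so $J^f\le\Stab(1)$ and $\psi(J^f)$ is defined.

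Next I would establish the key identity: for every $g\in\Stab(1)$ with $\psi(g)=(g_1,\dots,g_p)$ one has
\[
\psi(g^f)=\bigl((g_{\beta(1)})^f,\dots,(g_{\beta(p)})^f\bigr).
\]
This comes from a direct computation of the section of $g^f=f^{-1}gf$ at the vertex $i$: by the previous paragraph, $f$ carries the $i$th top subtree onto the $\beta(i)$th with restriction $f$, there $g$ acts as $g_{\beta(i)}$, and $f^{-1}$ carries it back with restriction $f^{-1}$, so $(g^f)_i=f^{-1}g_{\beta(i)}f=(g_{\beta(i)})^f$ (which also re-proves $g^f\in\Stab(1)$). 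Consequently $\psi(J^f)=(\delta_f\circ\tau_\beta)\bigl(\psi(J)\bigr)$, where $\tau_\beta$ is the coordinate permutation $(x_1,\dots,x_p)\mapsto(x_{\beta(1)},\dots,x_{\beta(p)})$ and $\delta_f$ the coordinate-wise conjugation $(x_1,\dots,x_p)\mapsto(x_1^f,\dots,x_p^f)$, both bijections of $\Aut\T\times\overset{p}{\cdots}\times\Aut\T$.

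Finally I would observe that $\tau_\beta$ leaves $K\times\overset{p}{\cdots}\times K$ invariant (all its factors coincide) and that $\delta_f$ sends it to $K^f\times\overset{p}{\cdots}\times K^f$. Since $\delta_f\circ\tau_\beta$ is a bijection of the ambient product, it preserves and reflects inclusions, so
\[
K\times\overset{p}{\cdots}\times K\subseteq\psi(J)
\iff
K^f\times\overset{p}{\cdots}\times K^f\subseteq\psi(J^f),
\]
which is the assertion, with the converse implication automatic rather than needing a separate symmetry argument. The only delicate point in the whole argument is getting the displayed section identity right — that is, pinning down exactly how conjugation by the diagonal automorphism $f$ shuffles the coordinates of $\psi$; once that is in place, everything else is purely formal bookkeeping with bijections of a direct product.
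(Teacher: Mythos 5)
Your proof is correct and takes essentially the same route as the paper's: both rest on the identity that, under $\psi$, conjugation by $f$ acts as a coordinate permutation by the root label of $f$ followed by coordinate-wise conjugation by $f$ — the paper derives this from the factorization $f=ch$ with $c$ rooted and $\psi(h)=(f,\ldots,f)$, while you derive it by a direct section computation. (Under the paper's left-to-right composition convention the permutation is by $\beta^{-1}$ rather than $\beta$, which is immaterial here; and your remark that the converse is automatic because $\delta_f\circ\tau_\beta$ is a bijection simply replaces the paper's appeal to $f^{-1}$ also having constant portrait.)
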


\begin{proof}
Since $f^{-1}$ is also an automorphism of constant portrait, it
suffices to prove the `only if' part.
Let $\beta$ be the permutation appearing at all labels of $f$.
Then we can write $f=ch$, where $c$ is the rooted automorphism
corresponding to $\beta$ and $h\in\Stab(1)$ is such that
$\psi(h)=(f,\ldots,f)$.

Let us now consider an arbitrary tuple $(k_1,\ldots,k_p)$,
with $k_i\in K$ for every $i=1,\ldots,p$.
By hypothesis, there exists $j\in J$ such that $\psi(j)=(k_1,\ldots,k_p)$.
Then $\psi(j^c)=(k_{\beta^{-1}(1)},\ldots,k_{\beta^{-1}(p)})$, and
consequently
\[
\psi(j^f) = \psi(j^c)^{\psi(h)}
= (k_{\beta^{-1}(1)},\ldots,k_{\beta^{-1}(p)})^{(f,\ldots,f)}
= (k_{\beta^{-1}(1)}^f,\ldots,k_{\beta^{-1}(p)}^f).
\]
Clearly, this implies that $K^f\times \cdots \times K^f \subseteq \psi(J^f)$.
\end{proof}

The previous proposition will be useful when we want to find a branch structure
in a GGS-group.
The same can be said about the following result.

\begin{pro}
\label{normal closure}
Let $G$ be a GGS-group, and let $L$ and $N$ be two normal subgroups of $G$.
If $L=\langle X \rangle^G$ for a subset $X$ of $G$, and
$(x,1,\ldots,1)\in \psi(N)$ for every $x\in X$, then
\[
L\times \overset{p}{\cdots} \times L \subseteq \psi(N).
\]
\end{pro}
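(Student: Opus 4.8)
The plan is to carry out the proof in two stages: first load the whole of $L$ into the first coordinate of $\psi(N)$, and then use the rooted automorphism $a$ to move it into every coordinate. Note that $N\le\Stab_G(1)$ (so that $\psi(N)$ makes sense), that $\psi$ restricted to $\Stab_G(1)$ is an injective homomorphism with image inside $G\times\overset{p}{\cdots}\times G$ (as noted after Theorem \ref{Stab(1)}), and hence that $\psi(N)$ is a subgroup of $G\times\overset{p}{\cdots}\times G$. Consider
\[
M=\{\ell\in G:\ (\ell,1,\overset{p}{\cdots},1)\in\psi(N)\}.
\]
Since $\psi(N)$ is a subgroup, $M$ is readily seen to be a subgroup of $G$: it is nonempty and closed under products and inverses.

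The first step is to show that $M$ contains every $G$-conjugate of every element of $X$, so that $M\supseteq\langle x^{g}:x\in X,\ g\in G\rangle=\langle X\rangle^{G}=L$. Fix $x\in X$ and choose $n\in N$ with $\psi(n)=(x,1,\overset{p}{\cdots},1)$. Given $g\in G$, Proposition \ref{surjective} provides $h\in\Stab_G(1)$ whose first coordinate under $\psi$ is $g$, say $\psi(h)=(g,h_2,\ldots,h_p)$. Then $n^{h}\in N$ because $N$ is normal in $G$, and
\[
\psi(n^{h})=\psi(n)^{\psi(h)}=(x^{g},1,\overset{p}{\cdots},1),
\]
so $x^{g}\in M$. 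As $M$ is a subgroup, $L\subseteq M$, i.e.\ $(\ell,1,\overset{p}{\cdots},1)\in\psi(N)$ for every $\ell\in L$.

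The second step is to spread $\ell$ across the coordinates. Conjugation by the rooted automorphism $a$ induces on $\psi(\Stab_G(1))$ a cyclic permutation of the $p$ coordinates, as one checks directly from (\ref{tuplak}) applied to $b$; iterating, the conjugates by $a^{0},a^{1},\ldots,a^{p-1}$ realize all cyclic shifts. Hence, starting from $n\in N$ with $\psi(n)=(\ell,1,\overset{p}{\cdots},1)$ and conjugating by the appropriate power of $a$ (which stays inside $N$ by normality), we obtain, for each $i\in\{1,\ldots,p\}$ and each $\ell\in L$, an element of $\psi(N)$ with $\ell$ in coordinate $i$ and $1$ elsewhere. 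Multiplying $p$ such elements together and using once more that $\psi(N)$ is a subgroup yields $(\ell_1,\ldots,\ell_p)\in\psi(N)$ for arbitrary $\ell_1,\ldots,\ell_p\in L$, which is precisely $L\times\overset{p}{\cdots}\times L\subseteq\psi(N)$.

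Every step is short, so I do not anticipate a genuine obstacle. The one point that carries the argument is that the single hypothesis $(x,1,\ldots,1)\in\psi(N)$ propagates to \emph{all} $G$-conjugates of $x$, and this rests entirely on the surjectivity of the coordinate projections from $\Stab_G(1)$ onto $G$ (Proposition \ref{surjective}); this is what makes the full normal closure $\langle X\rangle^{G}$ appear. One should also keep in mind that normality of $N$ in $G$ is used twice: once for conjugation by elements of $\Stab_G(1)$ in the first step and once for conjugation by $a$ in the second.
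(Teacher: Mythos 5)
Your argument is correct and follows essentially the same route as the paper's proof: first use Proposition \ref{surjective} together with normality of $N$ to place every conjugate $x^g$ (hence all of $L$, since the first coordinates with trivial tail form a subgroup) in the first component, then conjugate by powers of $a$ to shift into every component and multiply. The only difference is cosmetic — you make explicit the subgroup $M$ that the paper leaves implicit.
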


\begin{proof}
By Proposition \ref{surjective}, if $g\in G$ there exists $h\in \Stab_G(1)$
such that the first component of $\psi(h)$ is $g$.
Since $(x,1,\ldots,1)\in\psi(N)$ and $N$ is normal in $G$, it follows that
$(x^g,1,\ldots,1)\in \psi(N)$ for every $x\in X$ and $g\in G$.
Hence
\[
L\times \{1\} \times \overset{p-1}{\cdots} \times \{1\} \subseteq \psi(N),
\]
since $L=\langle x^g\mid x\in X,\ g\in G\rangle$.

Now, if $\psi(n)=(\ell_1,\ell_2,\ldots,\ell_p)$ then
$\psi(n^a)=(\ell_p,\ell_1,\ldots,\ell_{p-1})$.
As a consequence,
\[
\{1\} \times \cdots \times \{1\} \times L \times \{1\} \times \cdots \times \{1\}
\subseteq
\psi(N),
\]
where $L$ may appear at any position.
The result follows.
\end{proof}

\section{GGS-groups with non-constant defining vector}
\label{sec: non-constant}

In this section we prove Theorems A and B in the case that the defining vector
$\mathbf{e}$ of the GGS-group $G$ is not constant.
As it turns out, the key is to prove that $G$ has a certain branch structure.
We begin by recalling the concepts that we will need about branching in
$\Aut\T$.

\begin{dfn}
Let $G$ be a self-similar spherically transitive group of automorphisms of a
regular tree, and let $K$ be a non-trivial subgroup of $\Stab_G(1)$.
We say that $G$ is \emph{weakly regular branch\/} over $K$ if
\[
K\times \cdots \times K\subseteq \psi(K).
\]
If furthermore $K$ has finite index in $G$, we say that $G$ is
\emph{regular branch\/} over $K$.
\end{dfn}

It is well-known (and an immediate consequence of Proposition \ref{surjective})
that every GGS-group $G$ is self-similar and spherically transitive.
We next see that, if $\mathbf{e}$ is not constant, then $G$ is regular
branch over $\gamma_3(G)$.

\begin{lem}
\label{gamma3}
Let $G$ be a GGS-group with non-constant defining vector.
Then
\[
\psi(\gamma_3(\Stab_G(1)))
=
\gamma_3(G)\times \overset{p}{\cdots} \times \gamma_3(G).
\]
In particular,
\[
\gamma_3(G)\times \overset{p}{\cdots} \times \gamma_3(G)
\subseteq \psi(\gamma_3(G)),
\]
and $G$ is a regular branch group over $\gamma_3(G)$.
\end{lem}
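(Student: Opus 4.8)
The plan is to prove the two inclusions of the displayed equality separately, the containment $\subseteq$ being the routine one. By the remark following Theorem~\ref{Stab(1)}, the map $\psi$ sends $\Stab_G(1)$ isomorphically onto a subgroup of $G\times\overset{p}{\cdots}\times G$. Since $\gamma_3$ of any subgroup of a direct product is contained in the product of the $\gamma_3$'s of the factors, we get
\[
\psi(\gamma_3(\Stab_G(1)))=\gamma_3(\psi(\Stab_G(1)))\subseteq\gamma_3(G)\times\overset{p}{\cdots}\times\gamma_3(G).
\]

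For the reverse inclusion I would first reduce to a concrete computation. The subgroup $\gamma_3(\Stab_G(1))$ is characteristic in $\Stab_G(1)$ and hence normal in $G$; and $\gamma_3(G)$ is the normal closure in $G$ of the set $X=\{[[a,b],a],[[a,b],b]\}$, since $G=\langle a,b\rangle$. Applying Proposition~\ref{normal closure} with $L=\gamma_3(G)$, $N=\gamma_3(\Stab_G(1))$ and this $X$, it suffices to exhibit elements of $\gamma_3(\Stab_G(1))$ whose $\psi$-images are $([[a,b],a],1,\ldots,1)$ and $([[a,b],b],1,\ldots,1)$. The proposition then yields $\gamma_3(G)\times\overset{p}{\cdots}\times\gamma_3(G)\subseteq\psi(\gamma_3(\Stab_G(1)))$, and together with the first inclusion this gives the equality.

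To produce such elements, by Theorem~\ref{reduction} we may assume $e_1=1$. Using the recursive formulas $(\ref{tuplak})$ one computes $\psi([b_0,b_1])$, and then $\psi([[b_0,b_1],b_0])$ and $\psi([[b_0,b_1],b_1])$: each of these belongs to $\gamma_3(\Stab_G(1))$, is supported on only two of the $p$ coordinates, and has first coordinate equal to $[a,b]$, $[[a,b],a]$, respectively $[[a,b],b]$; the remaining nontrivial coordinate is an ``error term'' assembled from $[b,a^{e_{p-1}}]$. In favourable cases the error term vanishes outright (e.g.\ if, after the reduction, $e_{p-1}=0$), but in general it does not, and the crucial step --- the one place where the hypothesis that $\mathbf e$ is non-constant is really used --- is to cancel it: one forms an appropriate product of conjugates (by powers of $a$, and by the $b_i$) of triple commutators of this shape, so chosen that all the error terms cancel while the first coordinate still produces a normal generator of $\gamma_3(G)$. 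I expect this cancellation to be the main obstacle; it seems to require handling the non-symmetric and the non-constant symmetric cases by somewhat different combinations, and no such cancellation is available when $\mathbf e$ is constant, which is why that case is treated separately in the paper.

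Finally, the assertions after ``In particular'' follow easily. By Theorem~\ref{Stab(1)} we have $\gamma_3(G)\le G'\le\Stab_G(1)$, so $\psi$ is defined on $\gamma_3(G)$; and since $\Stab_G(1)\le G$ forces $\gamma_3(\Stab_G(1))\le\gamma_3(G)$, the equality just proved gives
\[
\gamma_3(G)\times\overset{p}{\cdots}\times\gamma_3(G)=\psi(\gamma_3(\Stab_G(1)))\subseteq\psi(\gamma_3(G)).
\]
Moreover $\gamma_3(G)$ is nontrivial (as $G$ is infinite) and of index $p^3$ in $G$ by part~(iii) of Theorem~\ref{Stab(1)}, and $G$ is self-similar and spherically transitive by Proposition~\ref{surjective}; hence $G$ is a regular branch group over $\gamma_3(G)$.
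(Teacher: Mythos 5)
Your setup is sound: the easy inclusion $\subseteq$, the reduction of $\supseteq$ to exhibiting tuples $([a,b,x],1,\ldots,1)$ in $\psi(\gamma_3(\Stab_G(1)))$ for a set of elements $x$ that generate $\gamma_3(G)$ as a normal subgroup, and the appeal to Proposition \ref{normal closure} all match the paper. (One small omission: to transfer the conclusion back from the conjugate group with $e_1'=1$ to $G$ you need Proposition \ref{conjugating} in addition to Theorem \ref{reduction}, since what is being proved is an inclusion of the form $K\times\cdots\times K\subseteq\psi(J)$ rather than an equality of orders.) However, the heart of the lemma --- actually producing such tuples --- is precisely the step you leave open with ``I expect this cancellation to be the main obstacle'', so the proof is not complete: everything that uses the hypothesis that $\mathbf{e}$ is non-constant is missing.

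Moreover, the route you sketch for that step is not the one that works. You propose to take $[[b_0,b_1],b_0]$ and $[[b_0,b_1],b_1]$, whose $\psi$-images are $([a,b,a],1,\ldots,1,[[b,a^{e_{p-1}}],b])$ and $([a,b,b],1,\ldots,1,[[b,a^{e_{p-1}}],a^{e_{p-1}}])$, and to kill the last coordinates by multiplying suitable conjugates; it is not at all clear such a cancellation exists, and the paper does something different. If $e_{p-1}=0$ there is no error term. Otherwise, if some consecutive pair $(e_{k-1},e_k)$, $(e_k,e_{k+1})$ is non-proportional, the paper builds from $b_{p-k+1}^{e_k}b_{p-k}^{-e_{k-1}}$ and $b_1b_{p-1}^{-e_{p-1}}$ elements $g,h\in\Stab_G(1)$ with $\psi(g)=(a,\ast,\ldots,\ast,1)$ and $\psi(h)=(b,\ast,\ldots,\ast,1)$; since their last coordinates are already trivial, $[b_0,b_1,g]$ and $[b_0,b_1,h]$ are supported on the first coordinate with nothing to cancel. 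In the remaining case $\mathbf{e}=(1,m,m^2,\ldots,m^{p-2})$ with $m\neq 0,1$ no such $g$ exists, and the paper instead uses $b_0b_1^{-m}$ and $b_1b_2^{-m}$ together with the alternative normal generating set $\{[a,b,ba^{-1}],[a,b,ab^{-m}]\}$ of $\gamma_3(G)$, which requires $m\neq 1$ --- this is exactly where non-constancy enters. Note also that the correct case division is by proportionality of consecutive pairs of entries of $\mathbf{e}$ (geometric progression or not), not by symmetry versus non-symmetry as you guessed.
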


\begin{proof}
Since $\psi(\Stab_G(1))$ is contained in $G\times \overset{p}{\cdots} \times G$,
it clearly suffices to prove the inclusion $\supseteq$.
By Theorem \ref{reduction} and Proposition \ref{conjugating}, we may assume
that $\mathbf{e}=(1,e_2,\ldots,e_{p-1})$.
If $e_{p-1}=0$ then
\[
\psi(b)
=
(a,\ldots,a^{e_{p-2}},1,b),
\]
and consequently
\[
\psi([b_0,b_1,b_0])
=
([a,b,a],1,\ldots,1)
\]
and
\[
\psi([b_0,b_1,b_1])
=
([a,b,b],1,\ldots,1).
\]
Since $G=\langle a,b \rangle$, it follows that
$\gamma_3(G)=\langle [a,b,a], [a,b,b] \rangle^G$, and then by
Proposition \ref{normal closure}, we have
$\gamma_3(G)\times \cdots \times \gamma_3(G)\subseteq \psi(\gamma_3(\Stab_G(1)))$.
Thus we may assume that $e_{p-1}\ne 0$.

Now we consider the following two cases separately:
\begin{enumerate}
\item
There exists $k\in\{2,\ldots,p-2\}$ such that $(e_{k-1},e_k)$ and
$(e_k,e_{k+1})$ are not proportional.
\item
$(e_{k-1},e_k)$ and $(e_k,e_{k+1})$ are proportional for all
$k=2,\ldots,p-2$.
\end{enumerate}
Observe that if $p=3$ then case (ii) vacuously holds.

(i)
Let us put
\[
g_k = b_{p-k+1}^{e_k}b_{p-k}^{-e_{k-1}}
\]
for $2\le k\le p-2$, so that
\[
\psi(g_k) = (a^{e_k^2-e_{k-1}e_{k+1}},\ldots,1).
\]
(The intermediate values represented by the dots are not necessarily $1$
in this case.)
Since $(e_{k-1},e_k)$ and $(e_k,e_{k+1})$ are not proportional,
we have $e_k^2-e_{k-1}e_{k+1}\ne 0$.
Hence there is a power $g$ of $g_k$ such that
\[
\psi(g) = (a,\ldots,1).
\]
On the other hand, since
\[
\psi(b_1b_{p-1}^{-e_{p-1}})
=
(ba^{-e_2e_{p-1}},\ldots,1),
\]
with the help of $g$ we can get an element $h\in\Stab_G(1)$ such that
\[
\psi(h) = (b,\ldots,1).
\]
Consequently,
\[
\psi([b_0,b_1,g])
=
([a,b,a],1,\ldots,1)
\]
and
\[
\psi([b_0,b_1,h])
=
([a,b,b],1,\ldots,1),
\]
and the result follows as before from Proposition \ref{normal closure}.

(ii)
Since $e_1=1$, it follows that $e_i=e_2^{i-1}$ for every $i=1,\ldots,p-1$.
(Note that this is valid all the same if $p=3$.)
Hence $\mathbf{e}=(1,m,m^2,\ldots,m^{p-2})$ with $m\ne 1$, because $\mathbf{e}$
is not constant.
Since $e_{p-1}\ne 0$, we also have $m\ne 0$, and consequently $m^{p-1}=1$.
Then
\[
\psi(b_0b_1^{-m}) = (ab^{-m},1,\ldots,1,ba^{-1})
\]
and
\[
\psi(b_1b_2^{-m}) = (ba^{-1},ab^{-m},1,\ldots,1).
\]
Hence
\[
\psi([b_0,b_1,b_1b_2^{-m}])
=
([a,b,ba^{-1}],1,\ldots,1)
\]
and
\[
\psi([b_2^m,b_1,b_0b_1^{-m}])
=
([a,b,ab^{-m}],1,\ldots,1).
\]
Now, since $G'=\langle [a,b] \rangle^G$ and
$\langle ab^{-m},ba^{-1} \rangle=\langle b^{1-m},ba^{-1} \rangle$ is
the whole of $G$ (at this point, it is essential that $m\ne 1$),
it follows that
\[
\gamma_3(G)=\langle [a,b,ab^{-m}], [a,b,ba^{-1}] \rangle^G.
\]
Thus the result is again a consequence of
Proposition \ref{normal closure}.
\end{proof}

As a consequence of the previous lemma, we can show that, for
$\mathbf{e}$ non-constant and $n\ge 3$, there is a close relation between
$\Stab_G(n)$ and $\Stab_G(n-1)$ in a GGS-group $G$.

\begin{lem}
\label{Stab n eta n-1}
Let $G$ be a GGS-group with non-constant defining vector $\mathbf{e}$.
Then, for every $n\ge 3$ we have
\[
\psi(\Stab_G(n))
=
\Stab_G(n-1) \times \overset{p}{\cdots} \times \Stab_G(n-1)
\]
and
\[
\psi_{n+1}(\Stab_{G_{n+1}}(n))
=
\Stab_{G_n}(n-1) \times \overset{p}{\cdots} \times \Stab_{G_n}(n-1).
\]
\end{lem}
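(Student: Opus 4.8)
The plan is to prove the two equalities by downward induction on $n$, using Lemma \ref{gamma3} as the engine. First consider the statement for $\Stab_G(n)$. The inclusion $\psi(\Stab_G(n))\subseteq \Stab_G(n-1)\times\overset{p}{\cdots}\times\Stab_G(n-1)$ is immediate from (\ref{psi on kth stabilizer}); the content is the reverse inclusion. The base case $n=3$ is essentially Lemma \ref{gamma3}: by Corollary \ref{Stab(2)} we have $\Stab_G(2)\le\gamma_3(G)$, and since $\gamma_3(\Stab_G(1))\le\Stab_G(1)$, the lemma gives $\gamma_3(G)\times\overset{p}{\cdots}\times\gamma_3(G)=\psi(\gamma_3(\Stab_G(1)))\subseteq\psi(\Stab_G(1))$. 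I need to upgrade this to $\Stab_G(2)\times\overset{p}{\cdots}\times\Stab_G(2)\subseteq\psi(\Stab_G(3))$. The point is that an element $h\in\Stab_G(1)$ with $\psi(h)=(g_1,\ldots,g_p)$ and every $g_i\in\Stab_G(2)$ automatically lies in $\Stab_G(3)$, by the very definition of $\psi$ and the level stabilizers; so it suffices to realize each tuple $(g,1,\ldots,1)$ with $g\in\Stab_G(2)$ inside $\psi(\Stab_G(1))$, and this follows from $\Stab_G(2)\le\gamma_3(G)$ together with the inclusion $\gamma_3(G)\times\overset{p}{\cdots}\times\gamma_3(G)\subseteq\psi(\gamma_3(\Stab_G(1)))$ from Lemma \ref{gamma3}.

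For the inductive step, suppose $n\ge 4$ and the equality holds for $n-1$, i.e.\ $\psi(\Stab_G(n-1))=\Stab_G(n-2)\times\overset{p}{\cdots}\times\Stab_G(n-2)$. Given a tuple $(g_1,\ldots,g_p)$ with each $g_i\in\Stab_G(n-1)$, I want an element of $\Stab_G(n)$ mapping to it under $\psi$. By Proposition \ref{surjective} and the argument in Proposition \ref{normal closure} it is enough to handle tuples of the form $(g,1,\ldots,1)$ with $g\in\Stab_G(n-1)$, since $\Stab_G(n-1)$ is normal in $G$ and the rooted automorphism $a$ cyclically permutes the coordinates. Now apply the inductive hypothesis to $g$: write $\psi(g)=(h_1,\ldots,h_p)$ with $h_i\in\Stab_G(n-2)$. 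Using Lemma \ref{gamma3} (for $n=4$) or the inductive hypothesis at level $n-2$ (for $n\ge 5$) one can lift $g$ one level higher, producing $\tilde g\in\Stab_G(1)$ with $\psi(\tilde g)=(g,1,\ldots,1)$ and $\tilde g\in\Stab_G(n)$; the membership in $\Stab_G(n)$ is again forced because all coordinates of $\psi(\tilde g)$ lie in $\Stab_G(n-1)$. Strictly speaking the cleanest way to package this is: the equality for $n$ is equivalent to $\Stab_G(n-1)\times\overset{p}{\cdots}\times\Stab_G(n-1)\subseteq\psi(\Stab_G(1))$, and this follows from the case $n-1$ by applying $\psi$ coordinatewise once more and using that $\psi$ is injective on $\Stab_G(1)$.

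The truncated statement then follows by a standard compatibility argument: the isomorphisms $\psi_{n+1}$ are defined exactly as $\psi$ but on $\Aut\T_{n+1}$, and the canonical projection $G_{n+1}\twoheadrightarrow G_n$ intertwines $\psi_{n+1}$ with $\psi_n$ on the respective stabilizers of the first level. Since $\Stab_{G_{n+1}}(n)$ is the image of $\Stab_G(n)$ in $G_{n+1}$ and $\Stab_{G_n}(n-1)$ is the image of $\Stab_G(n-1)$ in $G_n$, applying the already-proved equality $\psi(\Stab_G(n))=\Stab_G(n-1)\times\overset{p}{\cdots}\times\Stab_G(n-1)$ and pushing everything down to the truncated quotients yields $\psi_{n+1}(\Stab_{G_{n+1}}(n))=\Stab_{G_n}(n-1)\times\overset{p}{\cdots}\times\Stab_{G_n}(n-1)$. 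The main obstacle I anticipate is the bookkeeping at the base of the induction — making precise that a tuple of elements of $\Stab_G(k-1)$ is hit by an element of $\Stab_G(k)$ rather than merely by an element of $\Stab_G(1)$ — but this is a formal consequence of the definition of $\psi$ and the level filtration, not a genuine difficulty; once $n=3$ is settled via Lemma \ref{gamma3} and Corollary \ref{Stab(2)}, the induction runs mechanically.
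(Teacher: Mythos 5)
Your proposal is correct and takes essentially the same route as the paper: Corollary \ref{Stab(2)} and Lemma \ref{gamma3} give $\Stab_G(2)\times \overset{p}{\cdots} \times \Stab_G(2)\subseteq\psi(\Stab_G(1))$, and any $\psi$-preimage of a tuple of elements of $\Stab_G(n-1)$ automatically lies in $\Stab_G(n)$, with the truncated statement following by passing to quotients. The induction you set up is superfluous, since $\Stab_G(n-1)\le\Stab_G(2)$ means the single containment established at the base case already covers every $n\ge 3$ at once.
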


\begin{proof}
Clearly, it suffices to prove the first equality.
By using Corollary \ref{Stab(2)} and Lemma \ref{gamma3}, we have
\[
\Stab_G(2)\times \overset{p}{\cdots} \times \Stab_G(2)
\subseteq
\gamma_3(G)\times \overset{p}{\cdots} \times \gamma_3(G)
=
\psi(\gamma_3(\Stab_G(1))).
\]
Thus $\Stab_G(n-1)\times \cdots \times \Stab_G(n-1)$ is contained in
the image of $\Stab_G(1)$ under $\psi$ for all $n\ge 3$, and the result
follows.
\end{proof}

If the vector $\mathbf{e}$ is non-symmetric, we can improve Lemma \ref{gamma3}
as follows.

\begin{lem}
\label{regular branch}
Let $G$ be a GGS-group with non-symmetric defining vector.
Then
\[
\psi(\Stab_G(1)') = G'\times \overset{p}{\cdots} \times G'.
\]
In particular,
\[
G'\times \overset{p}{\cdots} \times G' \subseteq \psi(G'),
\]
and $G$ is a regular branch group over $G'$.
\end{lem}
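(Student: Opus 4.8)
The plan is to follow the same strategy as in Lemma~\ref{gamma3}, but now exploiting the extra freedom coming from non-symmetry to produce generators of $G'$ (rather than only of $\gamma_3(G)$) with first component arbitrary and all other components trivial. As in the previous proofs, by Theorem~\ref{reduction} and Proposition~\ref{conjugating} we may assume that $\mathbf{e}=(1,e_2,\ldots,e_{p-1})$; note that non-symmetry is preserved by this reduction, so there is an index $i$ with $e_i\ne e_{p-i}$. Since $G'=\langle[a,b]\rangle^G$, by Proposition~\ref{normal closure} it suffices to exhibit an element $h\in\Stab_G(1)$ with $\psi(h)=([a,b],1,\ldots,1)$, because then $G'\times\{1\}\times\cdots\times\{1\}\subseteq\psi(\Stab_G(1)')$ will follow from the fact that $[a,b]$ and its conjugates generate $G'$ as a normal subgroup, and the cyclic shift by $a$ spreads this over all positions, giving $G'\times\overset{p}{\cdots}\times G'\subseteq\psi(\Stab_G(1)')$; the reverse inclusion is automatic from $\psi(\Stab_G(1))\subseteq G\times\overset{p}{\cdots}\times G$ together with the observation that $\Stab_G(1)'$ maps into $G'\times\overset{p}{\cdots}\times G'$ since $\psi$ is a homomorphism.

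The core of the argument is therefore the construction of an element of $\Stab_G(1)$ whose $\psi$-image has first component $[a,b]$ and trivial in all other components. First I would look for an element $u\in\Stab_G(1)$ with $\psi(u)=(a^s,1,\ldots,1)$ for some $s\ne 0$: products of the form $b_jb_k^{-1}$ have $\psi$-image with a single nontrivial entry in many components, and combining two or three of these (as in case (i) of Lemma~\ref{gamma3}) one isolates a single position where only a power of $a$ survives. The key point where non-symmetry enters is that, whereas in Lemma~\ref{gamma3} one could only obtain commutators $[a,b,\,\cdot\,]$, here one wants to obtain the element $[a,b]$ itself in the first coordinate without any leftover $a$-power. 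Concretely, among the tuples (\ref{tuplak}) the entry $b$ occurs in position $i$ of $\psi(b_{p-i})$ (by our normalization, in some shifted copy), and by forming a suitable word one gets an element $v\in\Stab_G(1)$ with $\psi(v)=(b a^{m_1},\ast,\ldots)$ and, using $u$, one can kill the $a^{m_1}$ tail as well as all the other components, ending up with $\psi(v')=(b,1,\ldots,1)$. Then $[b_0,v']$ (or a similar commutator using $b_0$, whose first component is $a^{e_1}=a$) has $\psi$-image $([a,b],1,\ldots,1)$ as required.

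The delicate step, and the place where I expect the main obstacle, is precisely arranging for the non-symmetry hypothesis to guarantee that such cancellations are possible for \emph{every} non-symmetric $\mathbf{e}$, including the ``proportional'' subcase $\mathbf{e}=(1,m,m^2,\ldots,m^{p-2})$ that had to be handled separately in Lemma~\ref{gamma3}: in that geometric-progression case one must check that $m\ne m^{p-2}$ (equivalently $m^{p-3}\ne 1$), i.e.\ that geometric vectors fail to be symmetric exactly in the situations one needs, and then run an argument parallel to case (ii) of Lemma~\ref{gamma3} but pushing through to $G'$ rather than $\gamma_3(G)$ by using that $\langle b^{1-m}, ba^{-1}\rangle=G$. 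I would organize the proof by the same case split as Lemma~\ref{gamma3} (existence of a ``non-proportional neighbour pair'' versus the geometric case), and in each case track carefully which single-coordinate elements of $\Stab_G(1)$ are available, verifying that the non-symmetry of $\mathbf{e}$ always supplies enough of them to generate a copy of $G'$ in the first coordinate alone. Once the first equality is established, the ``in particular'' statement follows because $\Stab_G(1)'\le\Stab_G(1)\cap\psi^{-1}(G'\times\cdots\times G')$ and $\Stab_G(1)'$ is normal of finite index in $G$ (indeed $|G:\Stab_G(1)'|=p^{p+1}$ by Theorem~\ref{Stab(1)'}), so $G$ is regular branch over $G'$.
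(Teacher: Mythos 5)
Your overall plan (normalize $e_1=1$, produce $([a,b],1,\ldots,1)$ inside $\psi(\Stab_G(1)')$, then spread it over all coordinates via Proposition \ref{normal closure} and the shift by $a$) has the right shape, but its central step is impossible: there is \emph{no} element $u\in\Stab_G(1)$ with $\psi(u)=(a^s,1,\ldots,1)$ and $s\ne 0$, and no element $v'\in\Stab_G(1)$ with $\psi(v')=(b,1,\ldots,1)$. Indeed, write $g\in\Stab_G(1)$ with weight vector $(r_0,\ldots,r_{p-1})$ and use (\ref{components of word}): if all components of $\psi(g)$ other than the first are trivial, then $m_i=0$ and $r_i=0$ for $i\ge 2$ (and $r_0=0$ from the $p$th component). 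If the first component is $a^s$, then $\omega_1(b_0,\ldots,b_{p-1})=a^{s-m_1}\in\Stab_G(1)\cap\langle a\rangle=1$, so $r_1=0$ by Theorem \ref{total weight}; the whole weight vector vanishes, hence $s=m_1=0$. If the first component is $b$, then $m_1=0$ and $r_1=1$, so $(r_0\ \ldots\ r_{p-1})C$ is the second row of $C(\mathbf{e},0)$, which is nonzero since $\mathbf{e}\ne 0$ --- contradicting $m_i=0$ for all $i$. This is precisely why, in Lemma \ref{gamma3}, the elements with $\psi$-image $(a,\ldots,1)$ and $(b,\ldots,1)$ carry uncontrolled intermediate entries and are only useful inside a triple commutator against $[b_0,b_1]$, whose intermediate entries are trivial; that device cannot produce $[a,b]$ itself in a single coordinate.

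The idea you are missing is to use Lemma \ref{gamma3} as a cushion: since $\gamma_3(G)\times\cdots\times\gamma_3(G)\subseteq\psi(\Stab_G(1)')$ is already established, it suffices to exhibit $([a,b]^{c},1,\ldots,1)$ with $c\ne 0$ only \emph{modulo} $\gamma_3(G)\times\cdots\times\gamma_3(G)$. The paper normalizes via Theorem \ref{reduction} so that $e_1=1$ and $m:=e_{p-1}\ne 1$ (this, rather than the case split of Lemma \ref{gamma3}, is where non-symmetry enters: one chooses $k$ with $e_k\ne 0$ and $e_k\ne e_{p-k}$), and then multiplies $[b_0,b_1]\,[b_{p-1},b_0]^m\cdots[b_1,b_2]^{m^{p-1}}$. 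By (\ref{tuplak}), each factor contributes, modulo $\gamma_3(G)$ in every coordinate, a power of $[a,b]$ in exactly two coordinates, and the product telescopes to $([a,b]^{1-m},1,\ldots,1)$; since $m\ne 1$, the element $[a,b]^{1-m}$ normally generates $G'$ and Proposition \ref{normal closure} finishes the argument. Your closing remarks on the reverse inclusion and the ``in particular'' statement are fine, but as written the proof has a genuine gap at its core construction.
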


\begin{proof}
Observe that we only need to care about the inclusion $\supseteq$.
By Theorem \ref{reduction} and Proposition \ref{conjugating}, we may assume
that $e_1=1$ and $e_{p-1}\ne 1$, since $\mathbf{e}$ is non-symmetric.
Let us write $m$ for $e_{p-1}$.

By using (\ref{tuplak}), we get
\begin{align}
\label{tochazo}
\notag
\begin{split}
\psi([b_0,b_1])
&=
([a,b],1,\ldots,1,[b,a^m])
\\
&\equiv
([a,b],1,\ldots,1,[a,b]^{-m})
\pmod{\gamma_3(G)\times \overset{p}{\cdots} \times \gamma_3(G)},
\end{split}
\\
\notag
\begin{split}
\psi([b_{p-1},b_0]^m)
&=
(1,\ldots,1,[b,a^m]^m,[a,b]^m)
\\
&\equiv
(1,\ldots,1,[a,b]^{-m^2},[a,b]^m)
\pmod{\gamma_3(G)\times \overset{p}{\cdots} \times \gamma_3(G)},
\end{split}
\\
\notag
&\ \vdots
\\
\notag
\begin{split}
\psi([b_1,b_2]^{m^{p-1}})
&=
([b,a^m]^{m^{p-1}},[a,b]^{m^{p-1}},1,\ldots,1)
\\
&\equiv
([a,b]^{-m^p},[a,b]^{m^{p-1}},1,\ldots,1)
\pmod{\gamma_3(G)\times \overset{p}{\cdots} \times \gamma_3(G)}.
\end{split}
\end{align}
Since $m^p=m$ (recall that $m\in\F_p$), if we multiply together all the
expressions above, we obtain that
\begin{multline*}
\psi([b_0,b_1] [b_{p-1},b_0]^m \ldots [b_1,b_2]^{m^{p-1}})
\equiv
([a,b]^{1-m},1,\ldots,1)
\\
\pmod{\gamma_3(G)\times \overset{p}{\cdots} \times \gamma_3(G)}.
\end{multline*}
If we use the inclusion
\[
\gamma_3(G)\times \overset{p}{\cdots} \times \gamma_3(G)
\subseteq
\psi(\Stab_G(1)'),
\]
which is a consequence of Lemma \ref{gamma3}, we get
\[
([a,b]^{1-m},1,\ldots,1)\in \psi(\Stab_G(1)').
\]
Now, since $G=\langle a,b \rangle$ and $m\ne 1$, it follows that $G'$ is
the normal closure of $[a,b]^{1-m}$.
By Proposition \ref{normal closure}, we conclude that
$G'\times \cdots \times G'\subseteq \psi(\Stab_G(1)')$.
\end{proof}

Now we can proceed to calculate the order of $G_n$ for every $n\ge 1$,
and as a consequence, to obtain the Hausdorff dimension of $\overline G$
in $\Gamma$, provided that the defining vector $\mathbf{e}$ is not constant.
We deal separately with the following two cases: (i) $\mathbf{e}$ is
not symmetric; (ii) $\mathbf{e}$ is symmetric and not constant.
In both cases, the key is to determine the order of $\Stab_{G_3}(2)$
and to use Lemma \ref{Stab n eta n-1}.
We begin by case (i).

\begin{thm}
\label{non-symmetric stabilizer}
Let $G$ be a GGS-group with non-symmetric defining vector $\mathbf{e}$.
Then
\[
|\Stab_{G_3}(2)| = p^{t(p-1)},
\]
where $t$ is the rank of $C(\mathbf{e},0)$.
\end{thm}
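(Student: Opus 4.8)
I want to compute $|\Stab_{G_3}(2)|$ for a non-symmetric $\mathbf{e}$ by exploiting the branch structure established in Lemma~\ref{regular branch}, namely $\psi(\Stab_G(1)')=G'\times\overset{p}{\cdots}\times G'$, together with the description of $G'$ from Theorem~\ref{G'}. The first step is to use the isomorphism $\psi_3$ and the inclusion $\psi_3(\Stab_{G_3}(2))\subseteq\Stab_{G_2}(1)\times\overset{p}{\cdots}\times\Stab_{G_2}(1)$. By Theorem~\ref{order of G2}(i), each factor $\Stab_{G_2}(1)$ has dimension $t$ over $\F_p$, so the target direct product has order $p^{tp}$. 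The goal is then to pin down exactly which tuples $(\tilde g_1,\dots,\tilde g_p)$ arise as $\psi_3(\tilde g)$ for $\tilde g\in\Stab_{G_3}(2)$, and to show the constraints cut the dimension down from $tp$ to $t(p-1)$.

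Next I would identify the constraint. An element $g\in\Stab_G(1)$ lies in $\Stab_G(2)$ iff $(r_0\ \ldots\ r_{p-1})C=0$ by Theorem~\ref{Stab(2,3)}(i), where $(r_0,\ldots,r_{p-1})$ is its weight vector and $C=C(\mathbf{e},0)$; projecting modulo $\Stab_G(3)$ (and noting $\Stab_G(3)\le\Stab_G(1)'$ by Theorem~\ref{Stab(1)'}, hence is killed when we pass to $G_3$ and componentwise to $G_2$), what survives is controlled by the weight vector. Concretely, for $\tilde g\in\Stab_{G_3}(2)$ with $\psi_3(\tilde g)=(\tilde g_1,\dots,\tilde g_p)$ and $g_i$ lifted to $\Stab_G(1)$, equation (\ref{components of word}) together with (\ref{exponent}) shows that the tuple of \emph{total weights} $(r_1,\dots,r_p)$ of the $g_i$ is precisely the weight vector of $g$, and the condition $g\in\Stab_G(2)$ forces $(r_0\ \ldots\ r_{p-1})C=0$, i.e. the weight-tuple of the components lies in the left kernel of $C$. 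Since $\rk C=t$, the left kernel has dimension $p-t$. On the other hand, by Lemma~\ref{regular branch} the components $g_i$ range \emph{freely} over $\Stab_G(1)$ modulo $\Stab_G(1)'$ subject only to this single linear constraint on their total weights — more precisely, the map $\Stab_{G_3}(2)\to\Stab_{G_2}(1)^p$ has image equal to the set of tuples $(\tilde g_1,\dots,\tilde g_p)$ whose total-weight vector $(r_1,\ldots,r_p)$ satisfies $(r_1\ \ldots\ r_p)C=0$.

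To make that precise and get the count, I would argue as follows. Fix the isomorphism $\Stab_{G_2}(1)\xrightarrow{\sim}\img C$ (rows of $C$) from Theorem~\ref{order of G2}; the total weight of an element of $\Stab_{G_2}(1)$ corresponds, under $\Psi_2$, to summing the coordinates, i.e. to pairing with $\mathbf 1$. By Lemma~\ref{rank circulant}(ii) this linear functional on $\img C$ is \emph{surjective onto $\F_p$} for each component (as long as $t<p$; if $t=p$ then $C$ is invertible, $\mathbf 1\notin\ker$ business is vacuous and one checks the formula directly). Now consider the composite surjection $\Stab_{G_3}(2)\to\Stab_{G_2}(1)^p\to\F_p^p$ sending $\tilde g\mapsto$ (total-weight vector of its components). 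Its image is contained in $\ker C$ (left kernel, dimension $p-t$) by the $\Stab_G(2)$-condition above; conversely, Lemma~\ref{regular branch} gives $G'\times\overset{p}{\cdots}\times G'\subseteq\psi(\Stab_G(1)')\subseteq\psi(\Stab_G(2))$ (using Theorem~\ref{Stab(1)} (ii)), so every tuple of elements of $G'$ — equivalently, every tuple of components of total weight $0$ — is realized, and adding to one chosen realization of each basis vector of $\ker C$ shows the image is all of $\ker C$, hence has order $p^{p-t}$. The kernel of this composite is the set of $\tilde g$ all of whose components have total weight $0$, i.e. lie in $\Stab_{G_2}(1)\cap(G_2)'$; by Theorem~\ref{order of G2} $G_2$ has maximal class, so $\Stab_{G_2}(1)\cap\gamma_2(G_2)=\gamma_2(G_2)$ has order $p^{t-1}$, giving $p^{(t-1)p}$ for the $p$-fold product (again using Lemma~\ref{regular branch} to see all such tuples occur). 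Multiplying, $|\Stab_{G_3}(2)|=p^{(t-1)p}\cdot p^{p-t}=p^{tp-p+p-t}=p^{t(p-1)}$, as claimed.

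**Main obstacle.** The delicate point is the surjectivity half: showing that the image of $\Stab_{G_3}(2)$ in $\Stab_{G_2}(1)^p$ is \emph{exactly} the set of total-weight-constrained tuples, not something smaller. The inclusion into the constraint set is routine from (\ref{exponent}) and Theorem~\ref{Stab(2,3)}(i); the reverse inclusion is where Lemma~\ref{regular branch} does the real work, and one must be careful that it is being applied at the level of the finite quotient $G_3$ (using the second displayed inclusion in Lemma~\ref{Stab n eta n-1}'s proof pattern, i.e. that $\gamma_3(G)\times\cdots\times\gamma_3(G)\subseteq\psi(\Stab_G(1)')$ survives to $G_3$) and that passing to $G_2$ in each coordinate is compatible. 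I also need to double-check the boundary case $t=p$ (invertible $C$, only possible when $\mathbf e$ is non-symmetric with $\sum e_i\ne 0$), where $\ker C=0$ and the formula reads $|\Stab_{G_3}(2)|=p^{p(p-1)}$; this should follow directly since then $\Stab_{G_3}(2)\cong\Stab_{G_2}(1)^p$ with no constraint at all.
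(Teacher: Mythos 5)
Your argument is correct and is essentially the paper's own proof: both fibre $\Stab_{G_3}(2)$ over the weight-vector map (well defined on $G_3$ because $\Stab_G(3)\le\Stab_G(1)'$), identify the image with the left kernel of $C$ (order $p^{p-t}$) and each fibre with a full coset of $(G_2')^p$ (order $p^{p(t-1)}$) via Lemma \ref{regular branch}, and multiply. The one blemish is your side claim that the total weight on $\Stab_{G_2}(1)$ is the coordinate sum of the $\Psi_2$-image and is surjective ``by Lemma \ref{rank circulant}(ii)'' --- this is false whenever $\sum e_i\ne 1$ (for the Gupta--Sidki vector every row of $C$ sums to $0$, so that functional vanishes on the row space) --- but it is never actually used, since your fibre count rests on $|G_2'|=p^{t-1}$ coming from the maximal class structure of $G_2$, which is correct.
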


\begin{proof}
By Theorem \ref{reduction}, we may assume that $e_1=1$ and
$e_{p-1}\ne 1$.
For simplicity, let us write $C$ for $C(\mathbf{e},0)$.
Since $\Stab_{G_3}(2)=\Stab_G(2)/\Stab_G(3)$, we are going to study the image of
$\Stab_G(2)$ under the canonical epimorphism $\pi$ from $G$ onto $G_3$.

Let $g$ be an arbitrary element of $\Stab_G(1)$, and let $(r_0,\ldots,r_{p-1})$
denote the weight vector of $g$.
By Theorem \ref{Stab(2,3)}, we have $g\in\Stab_G(2)$ if and only if
\[
(r_0\ r_1\ \ldots\ r_{p-1})C=(0\ 0\ \ldots\ 0).
\]
Since the rank of $C$ is $t$, this system has $p^{p-t}$ solutions,
which we denote by
\[
r^{(i)} = (r^{(i)}_0,\ldots,r^{(i)}_{p-1}),
\]
for $i=1,\ldots,p^{p-t}$.
We may assume that $r^{(1)}=(0,\ldots,0)$.

Each solution $r^{(i)}$ determines a subset $R^{(i)}$ of $\Stab_G(2)$,
consisting of all the elements whose weight vector is $r^{(i)}$.
Put $S^{(i)}=\pi(R^{(i)})$.
By the discussion in the previous paragraph, we know that $\Stab_{G_3}(2)$
is the union of all the $S^{(i)}$ for $i=1,\ldots,p^{p-t}$.
We will prove the following:
\begin{enumerate}
\item
If $i\ne j$ then $S^{(i)}$ and $S^{(j)}$ are disjoint.
(By Theorem \ref{total weight}, we know that $R^{(i)}$ and $R^{(j)}$ are
 disjoint, but we have to rule out the possibility that an element in $R^{(i)}$
 and an element in $R^{(j)}$ have the same image in $G_3$.)
\item
$|S^{(i)}|=p^{p(t-1)}$ for all $i=1,\ldots,p^{p-t}$.
\end{enumerate}
Once (i) and (ii) are proved, it readily follows that
$|\Stab_{G_3}(2)|=p^{t(p-1)}$, as desired.

We begin by proving (i).
For this purpose, assume that $g\in R^{(i)}$ and $h\in R^{(j)}$ are two elements
with the same image in $G_3$.
Then $gh^{-1}\in\Stab_G(3)$ and, by Theorem \ref{Stab(2,3)}, the weight vector of
$gh^{-1}$ is $(0,\ldots,0)$.
Since the weight vector defines a homomorphism from $\Stab_G(1)$ to $\F_p^p$,
it follows that $r^{(i)}=r^{(j)}$, and so $i=j$, as desired.

Now we proceed to the proof of (ii).
By definition, each $S^{(i)}$ is non-empty.
If $h_i$ is an element of $S^{(i)}$, then it is clear that $S^{(i)}=h_iS^{(1)}$.
Thus $|S^{(i)}|=|S^{(1)}|$, and it suffices to see that $S^{(1)}$ has the desired
cardinality.
Let $g$ be an arbitrary element of $\Stab_G(2)$.
According to (\ref{components of word}), we have $g\in R^{(1)}$ if and only if
each component of $\psi(g)$ has total weight equal to $0$.
By Theorem \ref{G'}, this is equivalent to $\psi(g)$ lying in $G'\times \cdots \times G'$.
On the other hand, since $G'\le\Stab_G(1)$, we have
$\psi^{-1}(G'\times \cdots \times G')\le \Stab(2)$.
Hence
\begin{equation}
\label{R1}
R^{(1)} = G\cap \psi^{-1}(G'\times \cdots \times G').
\end{equation}
Note that this equality is valid for any defining vector $\mathbf{e}$.
Now, since we are working under the assumption that $\mathbf{e}$ is non-symmetric,
we have $G'\times \cdots \times G'\le \psi(G')$ by Lemma \ref{regular branch}.
Thus we conclude that $R^{(1)}=\psi^{-1}(G'\times \cdots \times G')$ in this case or,
equivalently, that
\[
\psi(R^{(1)})=G'\times \cdots \times G'.
\]

We consider now the following commutative diagram:
\begin{equation}
\label{commutative diagram}
\begin{CD}
R^{(1)} @>{\pi}>> S^{(1)}
\\
@V{\psi}VV @VV{\psi_3}V
\\
G'\times \cdots \times G' @>{\tilde\pi \times \cdots \times \tilde\pi}>>
\displaystyle \frac{G'}{\Stab_G(2)} \times \cdots \times \frac{G'}{\Stab_G(2)},
\end{CD}
\end{equation}
where $\tilde\pi$ denotes reduction modulo $\Stab_G(2)$.
(Take into account that $G'$ contains $\Stab_G(2)$ by Theorem \ref{Stab(1)}.)
By the discussion of the preceding paragraph, the left vertical arrow of the
diagram is surjective.
Consequently, the right vertical arrow is also surjective, and since it is
obviously injective, it follows that it is a bijective map.
In particular,
\[
|S^{(1)}|=|G':\Stab_G(2)|^p.
\]
Now, by Theorems \ref{Stab(1)} and \ref{order of G2}, we have $|G:G'|=p^2$
and $|G:\Stab_G(2)|=p^{t+1}$.
Thus $|G':\Stab_G(2)|=p^{t-1}$, and we conclude that $|S^{(1)}|=p^{p(t-1)}$,
as desired.
\end{proof}

\begin{thm}
\label{non-symmetric}
Let $G$ be a GGS-group with non-symmetric defining vector $\mathbf{e}$.
Then
\[
\log_p |G_n| = tp^{n-2}+1,
\quad
\text{for every $n\ge 2$,}
\]
where $t$ is the rank of $C(\mathbf{e},0)$, and
\[
\dim_{\Gamma} \overline G = \frac{(p-1)t}{p^2}.
\]
\end{thm}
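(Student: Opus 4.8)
The plan is to run an induction on $n$, feeding the relations between consecutive congruence quotients from Lemma~\ref{Stab n eta n-1} and the value of $|\Stab_{G_3}(2)|$ from Theorem~\ref{non-symmetric stabilizer} into a simple recursion, and then to substitute the resulting formula for $|G_n|$ into~(\ref{dim-formula}).

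\textbf{Base values.} By Theorem~\ref{order of G2} we have $\log_p|G_2|=t+1$, which is $tp^{n-2}+1$ at $n=2$. For $n=3$, since $\Stab_G(3)\le\Stab_G(2)$ the group $\Stab_{G_3}(2)=\Stab_G(2)/\Stab_G(3)$ is exactly the kernel of the natural epimorphism $G_3\twoheadrightarrow G_2$, so by Theorem~\ref{non-symmetric stabilizer}
\[
|G_3|=|G_2|\,|\Stab_{G_3}(2)|=p^{t+1}\cdot p^{t(p-1)}=p^{tp+1},
\]
which again agrees with $tp^{n-2}+1$ at $n=3$.

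\textbf{Recursion and conclusion.} For every $m$ the kernel of $G_m\twoheadrightarrow G_{m-1}$ is $\Stab_{G_m}(m-1)$, hence $|\Stab_{G_m}(m-1)|=|G_m|/|G_{m-1}|$. Since $\psi_{m}$ is injective on the first-level stabilizer, the second displayed equality in Lemma~\ref{Stab n eta n-1} gives, for $n\ge 3$,
\[
|\Stab_{G_{n+1}}(n)|=|\Stab_{G_n}(n-1)|^{\,p},
\qquad\text{that is,}\qquad
\frac{|G_{n+1}|}{|G_n|}=\left(\frac{|G_n|}{|G_{n-1}|}\right)^{p}.
\]
Writing $d_n=\log_p|G_n|-\log_p|G_{n-1}|$, this says $d_{n+1}=p\,d_n$ for $n\ge 3$, so $d_m=p^{m-3}d_3$ for all $m\ge 3$, and the base values give $d_3=(tp+1)-(t+1)=t(p-1)$. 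Summing the geometric series,
\[
\log_p|G_n|=\log_p|G_2|+\sum_{k=3}^{n}d_k
=(t+1)+t(p-1)\sum_{j=0}^{n-3}p^{\,j}
=(t+1)+t\bigl(p^{n-2}-1\bigr)=tp^{n-2}+1
\]
for every $n\ge 2$. Finally, plugging this into~(\ref{dim-formula}),
\[
\dim_{\Gamma}\overline G
=(p-1)\liminf_{n\to\infty}\frac{tp^{n-2}+1}{p^{n}}
=\frac{(p-1)t}{p^{2}},
\]
as claimed.

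\textbf{Where the difficulty lies.} Essentially all of the substance has already been carried by Theorem~\ref{non-symmetric stabilizer} and Lemma~\ref{Stab n eta n-1}; what remains is only bookkeeping. The two points to be careful about are the identification of $\Stab_{G_m}(m-1)$ with the kernel of $G_m\twoheadrightarrow G_{m-1}$ (which relies on $\Stab_G(m)\le\Stab_G(m-1)$), and the fact that Lemma~\ref{Stab n eta n-1} is valid only for $n\ge 3$, so the multiplicative recursion $d_{n+1}=p\,d_n$ may be invoked only from $d_3$ onwards — which is why the separate computation of $|G_2|$ and $|G_3|$ is needed to seed it.
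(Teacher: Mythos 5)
Your proposal is correct and follows essentially the same route as the paper: both arguments reduce to the relation $|\Stab_{G_{n+1}}(n)|=|\Stab_{G_n}(n-1)|^p$ from Lemma~\ref{Stab n eta n-1} (valid for $n\ge 3$), seeded by $|G_2|=p^{t+1}$ from Theorem~\ref{order of G2} and $|\Stab_{G_3}(2)|=p^{t(p-1)}$ from Theorem~\ref{non-symmetric stabilizer}, and then conclude via~(\ref{dim-formula}). Your reformulation as a geometric series in $d_n=\log_p|G_n|-\log_p|G_{n-1}|$ is just a repackaging of the paper's induction.
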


\begin{proof}
We argue by induction on $n\ge 2$.
By Theorem \ref{order of G2}, we have $|G_2|=p^{t+1}$.
Suppose now that $n>2$ and that the result is true for $n-1$.
By using Lemma \ref{Stab n eta n-1}, we have
\[
|\Stab_{G_n}(n-1)|=|\Stab_{G_{n-1}}(n-2)|^p=\cdots
=|\Stab_{G_3}(2)|^{p^{n-3}}.
\]
Since $|\Stab_{G_3}(2)|=p^{t(p-1)}$ by Theorem \ref{non-symmetric stabilizer},
we conclude that
\[
|G_n|=|G_{n-1}||\Stab_{G_n}(n-1)|=p^{tp^{n-3}+1} \cdot p^{tp^{n-3}(p-1)}
=p^{tp^{n-2}+1},
\]
as desired.
Finally, the value of $\dim_{\Gamma} \overline G$ follows directly from
(\ref{dim-formula}).
\end{proof}

Next we consider the case when the vector $\mathbf{e}$ is non-constant and
symmetric.

\begin{thm}
\label{symmetric stabilizer}
Let $G$ be a GGS-group with symmetric non-constant defining vector $\mathbf{e}$.
Then
\[
|\Stab_{G_3}(2)| = p^{t(p-1)-1},
\]
where $t$ is the rank of $C(\mathbf{e},0)$.
\end{thm}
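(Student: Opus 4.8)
The plan is to follow the same strategy as in Theorem \ref{non-symmetric stabilizer}, tracking the extra constraint that appears when $\mathbf{e}$ is symmetric. As before, by Theorem \ref{reduction} (together with Proposition \ref{conjugating}, which preserves branch-type inclusions) we may assume $e_1=1$; since $\mathbf{e}$ is symmetric and non-constant, we then have $e_{p-1}=e_1=1$ but $\mathbf{e}$ is not the all-ones vector. Write $C=C(\mathbf{e},0)$ and let $\pi\colon G\to G_3$ be the canonical epimorphism. Exactly as in the non-symmetric case, $\Stab_G(2)$ splits as a disjoint union of the fibres $R^{(i)}$ of the weight-vector homomorphism over the $p^{p-t}$ solutions $r^{(i)}$ of $(r_0\ \ldots\ r_{p-1})C=0$, and the images $S^{(i)}=\pi(R^{(i)})$ are pairwise disjoint by Theorem \ref{Stab(2,3)}(ii) (the argument for disjointness only uses that the weight vector is a homomorphism, so it carries over verbatim). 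Since each $S^{(i)}$ is a coset of $S^{(1)}$, we again reduce to computing $|S^{(1)}|$, where $S^{(1)}=\pi(R^{(1)})$ and, by equation (\ref{R1}), $R^{(1)}=G\cap\psi^{-1}(G'\times\cdots\times G')$ — an identity valid for any defining vector.

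The point of departure from the non-symmetric case is that Lemma \ref{regular branch} is no longer available: when $\mathbf{e}$ is symmetric we do \emph{not} expect $G'\times\cdots\times G'\subseteq\psi(G')$. So the first main task is to identify exactly the subgroup $\psi(R^{(1)})$ of $G'\times\cdots\times G'$. I expect that, instead of the full product, $\psi(R^{(1)})$ is the index-$p$ subgroup cut out by a single linear relation among the $p$ coordinates modulo $\gamma_3(G)$ — concretely, something like the set of tuples $(g_1,\ldots,g_p)\in G'\times\cdots\times G'$ whose images in $(G'/\gamma_3(G))^p\cong\F_p^p$ satisfy $\sum_i\lambda_i\overline{g_i}=0$ for suitable scalars $\lambda_i$ coming from $\mathbf{e}$. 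The natural way to pin this down is to compute, as in the display (\ref{tochazo}) in the proof of Lemma \ref{regular branch}, the images under $\psi$ of the commutators $[b_i,b_{i+1}]$ modulo $\gamma_3(G)\times\cdots\times\gamma_3(G)$, using that $\gamma_3(G)\times\cdots\times\gamma_3(G)\subseteq\psi(\Stab_G(1)')$ by Lemma \ref{gamma3}; here the symmetry $e_i=e_{p-i}$ forces the combination that produced $([a,b]^{1-m},1,\ldots,1)$ in the non-symmetric proof to instead produce something in $\gamma_3(G)\times\cdots\times\gamma_3(G)$, which is precisely why one relation is lost. So I would show $\psi(\Stab_G(1)')$ has index $p$ in $G'\times\cdots\times G'$, hence $\psi(R^{(1)})$ does too; since $R^{(1)}=\psi^{-1}(\psi(R^{(1)}))$ by (\ref{R1}), this gives $|\psi(R^{(1)})|=|G'\times\cdots\times G'|/p$ at the level of the relevant finite quotients.

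With $\psi(R^{(1)})$ identified, the commutative diagram (\ref{commutative diagram}) goes through with the top-left corner $R^{(1)}$ mapping onto $\psi(R^{(1)})$ rather than the full product; since $\psi_3$ (the right vertical arrow) is injective, $|S^{(1)}|=|\psi(R^{(1)}):\text{(kernel of }\tilde\pi\times\cdots\times\tilde\pi\text{ restricted there)}|$. The kernel of $\tilde\pi\times\cdots\times\tilde\pi$ is $\Stab_G(2)\times\cdots\times\Stab_G(2)$, which by Corollary \ref{Stab(2)} and Lemma \ref{gamma3} is contained in $\gamma_3(G)\times\cdots\times\gamma_3(G)\subseteq\psi(\Stab_G(1)')\subseteq\psi(R^{(1)})$, so it sits entirely inside $\psi(R^{(1)})$. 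Hence $|S^{(1)}|=|\psi(R^{(1)}):(\Stab_G(2))^p|=|G'\times\cdots\times G':(\Stab_G(2))^p|/p=|G':\Stab_G(2)|^p/p=p^{p(t-1)-1}$, using $|G':\Stab_G(2)|=p^{t-1}$ from Theorems \ref{Stab(1)} and \ref{order of G2} as before. Summing over the $p^{p-t}$ disjoint translates gives $|\Stab_{G_3}(2)|=p^{p-t}\cdot p^{p(t-1)-1}=p^{t(p-1)-1}$, which is the claim. The main obstacle is the middle step: proving cleanly that the extra relation is exactly one-dimensional — i.e. that $\psi(\Stab_G(1)')$ has index precisely $p$ (not $1$, not $p^2$) in $G'\times\cdots\times G'$ — which requires the explicit commutator computation and a careful use of the symmetry of $\mathbf{e}$ to see that no further independent elements of the product can be realized, and conversely that $\gamma_3(G)\times\cdots\times\gamma_3(G)$ together with the one surviving combination already generates an index-$p$ subgroup. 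I would handle the "no further elements" direction by the weight-vector bookkeeping (any $g$ with $\psi(g)\in G'\times\cdots\times G'$ automatically lies in $\Stab_G(2)$, and the map $R^{(1)}\to (G'/\gamma_3(G))^p$ has image constrained by equation (\ref{exponent}) applied one level down) and the "index exactly $p$" lower bound by exhibiting enough explicit elements.
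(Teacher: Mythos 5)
Your proposal follows the paper's strategy exactly: the same decomposition of $\Stab_{G_3}(2)$ into the fibres $S^{(i)}$, the same reduction to showing that $|G'\times\cdots\times G':\psi(R^{(1)})|=p$ (where $R^{(1)}=\Stab_G(1)'$ by Theorem \ref{Stab(1)'}), and the same final count. The upper bound ``index at most $p$'' via the computation of $\psi([b_i,b_{i+1}])=(1,\ldots,1,[b,a],[a,b],1,\ldots,1)$ and the fact that $\gamma_3(G)\times\cdots\times\gamma_3(G)\subseteq\psi(\Stab_G(1)')$ is also what the paper does, and your bookkeeping $|S^{(1)}|=p^{p(t-1)-1}$, $|\Stab_{G_3}(2)|=p^{p-t}\cdot p^{p(t-1)-1}=p^{t(p-1)-1}$ is correct.

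However, there is a genuine gap at the step you yourself flag as ``the main obstacle'': proving that the index is exactly $p$ rather than $1$, i.e.\ that $([a,b],1,\ldots,1)\notin\psi(\Stab_G(1)')$. Observing that the particular product from the proof of Lemma \ref{regular branch} degenerates into $\gamma_3(G)\times\cdots\times\gamma_3(G)$ when $e_{p-1}=e_1$ only shows that one attempted construction fails; it does not rule out that some other word in the conjugates $[b_i,b_j]^h$ realizes $([a,b],1,\ldots,1)$. What is needed --- and what your ``weight-vector bookkeeping'' gestures at without supplying --- is an explicit linear functional that vanishes on all of $\psi(\Stab_G(1)')$ but not on $([a,b],1,\ldots,1)$. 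The paper takes $\Lambda(g_1,\ldots,g_p)=\sum_k\lambda(g_k)$ with $\lambda(g)=\sum_i ir_i$, where $(r_0,\ldots,r_{p-1})$ is the weight vector, and the argument has two non-trivial ingredients absent from your sketch: first, $\lambda$ is invariant under conjugation by $G$ on elements of $G'$ (this uses that elements of $G'$ have total weight zero, and is what allows one to pass from the bare generators $[b_i,b_j]$ to all conjugates $[b_i,b_j]^h$ appearing in (\ref{generators R1})); second, $\Lambda(\psi([b_i,b_j]))=e_{i-j}-e_{j-i}=0$, which is exactly where the symmetry of $\mathbf{e}$ enters, while $\Lambda([a,b],1,\ldots,1)=\lambda(b_1^{-1}b_0)=-1\neq 0$. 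Until you construct this invariant (or make the equivalent observation that the image of $\psi(\Stab_G(1)')$ in $(G'/\gamma_3(G))^p$ is spanned by the images of the $\psi([b_i,b_j])$ alone, because $G$ acts trivially on $G'/\gamma_3(G)$, and that each of those images has coordinate sum $e_{i-j}-e_{j-i}=0$), the possibility that the index is $1$ --- and hence that the answer is $p^{t(p-1)}$ as in the non-symmetric case --- has not been excluded.
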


\begin{proof}
Let $\pi$, $R^{(i)}$ and $S^{(i)}$ for $i=1,\ldots,p^{p-t}$ be as in
the proof of Theorem \ref{non-symmetric stabilizer}.
The plan of the proof is the same as in that theorem.
The difference is that, in this case, we need to see that
\[
|S^{(1)}|
=
p^{p(t-1)-1}.
\]
For that purpose, it suffices to prove that the image of $S^{(1)}$ under
the injective map $\psi_3$ is a subgroup of index $p$ of
\[
\frac{G'}{\Stab_G(2)}\times\cdots\times\frac{G'}{\Stab_G(2)}.
\]

We know from (\ref{R1}) that $R^{(1)}=G\cap \psi^{-1}(G'\times\cdots\times G')$
consists of all elements of $G$ whose weight vector is $(0,\ldots,0)$.
According to Theorem \ref{Stab(1)'}, we have $R^{(1)}=\Stab_G(1)'$.
Hence
\begin{equation}
\label{generators R1}
R^{(1)} = \langle [b_i,b_j]^h \mid 0\le i,j\le p-1,\ h\in\Stab_G(1) \rangle.
\end{equation}

Let us consider again the commutative diagram in (\ref{commutative diagram}).
Since
\[
\ker (\tilde\pi \times \cdots \times \tilde\pi)
=
\Stab_G(2) \times \cdots \times \Stab_G(2)
=
\psi(\Stab_G(3))
\]
by Lemma \ref{Stab n eta n-1}, and since $\Stab_G(3)\le R^{(1)}$ by
Theorem \ref{Stab(2,3)}, it follows that the index
\begin{multline*}
\left|
\frac{G'}{\Stab_G(2)}\times\cdots\times\frac{G'}{\Stab_G(2)} : \psi_3(S^{(1)})
\right|
=
\\
|(\tilde\pi \times \cdots \times \tilde\pi)(G'\times\cdots\times G')
:(\tilde\pi \times \cdots \times \tilde\pi)(\psi(R^{(1)}))|
\end{multline*}
is the same as
\[
|G'\times\cdots\times G':\psi(R^{(1)})|.
\]
Thus it suffices to prove that this last index is $p$.

Let $\overline\psi$ the map from $R^{(1)}$ to
$G'/\gamma_3(G)\times \overset{p}{\cdots} \times G'/\gamma_3(G)$ which is obtained by
first applying $\psi$ and then reducing every component modulo $\gamma_3(G)$.
Observe that $G'/\gamma_3(G)\times \overset{p}{\cdots} \times G'/\gamma_3(G)$ can be
seen as a vector space of dimension $p$ over $\F_p$, since
$|G':\gamma_3(G)|=p$.
Since we may assume that $e_1=1$, and since $e_{p-1}=e_1$, we have
\[
\psi([b_i,b_{i+1}]) = (1,\ldots,1,[b,a],[a,b],1,\ldots,1),
\quad
\text{for $i=1,\ldots,p-1$,}
\]
where $[b,a]$ appears at the $i$th position.
Now, $G'/\gamma_3(G)$ is generated by the image of $[b,a]$, and so it readily
follows that the dimension of $\overline\psi(R^{(1)})$ is at least $p-1$.
Hence
\[
|G'\times \cdots \times G': \psi(R^{(1)})(\gamma_3(G)\times \cdots \times \gamma_3(G))|
=
1 \text{ or } p.
\]
Since $\gamma_3(G)\times \cdots \times \gamma_3(G)\le \psi(R^{(1)})$ by
Lemma \ref{gamma3} and (\ref{R1}), we get
\[
|G'\times \cdots \times G':\psi(R^{(1)})|
=
1 \text{ or } p.
\]
Thus it suffices to see that $([a,b],1,\ldots,1)\not\in \psi(R^{(1)})$ in order to
conclude that $|G'\times\cdots\times G':\psi(R^{(1)})|=p$, as desired.

Let $\lambda:\Stab_G(1) \longrightarrow \F_p$ be the homomorphism given by
\[
g \longmapsto \sum_{i=0}^{p-1} \, ir_i,
\]
where $(r_0,\ldots,r_{p-1})$ is the weight vector of $g$.
If $g\in\Stab_G(1)$ then the weight vector of $g^b$ is also $(r_0,\ldots,r_{p-1})$,
and the weight vector of $g^a$ is $(r_{p-1},r_0,\ldots,r_{p-2})$.
Hence $\lambda(g^b)=\lambda(g)$, and if $g\in G'$, then furthermore
\[
\lambda(g^a)
=
\sum_{i=0}^{p-1} \, ir_{i-1}
=
\sum_{i=0}^{p-1} \, r_{i-1}
+
\sum_{i=0}^{p-1} \, (i-1)r_{i-1}
\\
=
\lambda(g),
\]
since $r_0+\cdots+r_{p-1}=0$ by Theorem \ref{G'}.
It follows that $\lambda(g^h)=\lambda(g)$ for every $g\in G'$ and $h\in G$.

Now we define $\Lambda:G'\times\cdots\times G'\longrightarrow \F_p$
by means of
\[
\Lambda(g_1,\ldots,g_p)
=
\lambda(g_1) + \cdots + \lambda(g_p).
\]
By the preceding paragraph, we have
\[
\Lambda(g^h)
=
\Lambda(g),
\quad
\text{for all $g\in G'\times\cdots\times G'$ and $h\in G\times\cdots\times G$.}
\]
Hence $\ker\Lambda$ is a normal subgroup of $G\times\cdots\times G$.

For every $1\le i<j\le p$, we have
\begin{multline*}
\psi([b_i,b_j])
=
(1,\ldots,1,[b,a^{e_{i-j}}],1,\ldots,1,[a^{e_{j-i}},b],1,\ldots,1)
=
\\
(1,\ldots,1,b_0^{-1}b_{e_{i-j}},1,\ldots,1,b_{e_{j-i}}^{-1}b_0,1,\ldots,1),
\end{multline*}
where the non-trivial components are at positions $i$ and $j$.
Since $\mathbf{e}$ is symmetric, we have $e_{i-j}=e_{j-i}$, and consequently
\[
\Lambda(\psi([b_i,b_j]))
=
e_{i-j}-e_{j-i}
=
0.
\]
Hence $\psi([b_i,b_j])\in \ker\Lambda$, and since $\ker\Lambda$ is a normal
subgroup of $G\times \cdots \times G$, it follows from  (\ref{generators R1})
that $\psi(R^{(1)})\le\ker\Lambda$.
Since
\[
\Lambda([a,b],1,\ldots,1)
=
\Lambda(b_1^{-1}b_0,1,\ldots,1)
=
-1,
\]
we deduce that $([a,b],1,\ldots,1)\not\in\psi(R^{(1)})$, which completes the proof.
\end{proof}

\begin{thm}
\label{symmetric}
Let $G$ be a GGS-group with a non-constant symmetric defining vector $\mathbf{e}$.
Then
\[
\log_p |G_n| = tp^{n-2}+1-\frac{p^{n-2}-1}{p-1},
\quad
\text{for every $n\ge 2$,}
\]
where $t$ is the rank of $C(\mathbf{e},0)$, and
\[
\dim_{\Gamma} \overline G = \frac{(p-1)t-1}{p^2}.
\]
\end{thm}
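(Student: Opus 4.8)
The strategy mirrors exactly the proof of Theorem \ref{non-symmetric}, but now feeding in the sharper count from Theorem \ref{symmetric stabilizer} in place of Theorem \ref{non-symmetric stabilizer}. First I would argue by induction on $n\ge 2$, with base case $|G_2|=p^{t+1}$ given by Theorem \ref{order of G2}; one checks directly that the claimed formula reduces to $t+1$ when $n=2$, since the correction term $(p^{n-2}-1)/(p-1)$ vanishes. For the inductive step, since $\mathbf{e}$ is non-constant, Lemma \ref{Stab n eta n-1} applies and gives, for every $k\ge 3$,
\[
|\Stab_{G_n}(n-1)|
=
|\Stab_{G_{n-1}}(n-2)|^p
=
\cdots
=
|\Stab_{G_3}(2)|^{p^{n-3}},
\]
so that by Theorem \ref{symmetric stabilizer} we get $|\Stab_{G_n}(n-1)|=p^{(t(p-1)-1)p^{n-3}}$.

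Then I would combine this with $|G_n|=|G_{n-1}|\,|\Stab_{G_n}(n-1)|$ and the inductive hypothesis. Writing $a_n=\log_p|G_n|$, the recursion is
\[
a_n
=
a_{n-1}
+
\bigl(t(p-1)-1\bigr)p^{n-3},
\qquad
a_2=t+1.
\]
Summing the geometric-type series,
\[
a_n
=
(t+1)
+
\bigl(t(p-1)-1\bigr)\sum_{j=3}^{n} p^{j-3}
=
(t+1)
+
\bigl(t(p-1)-1\bigr)\,\frac{p^{n-2}-1}{p-1}.
\]
Expanding $t(p-1)-1=t(p-1)-1$ and distributing over $(p^{n-2}-1)/(p-1)$ yields
\[
a_n
=
(t+1)
+
t(p^{n-2}-1)
-
\frac{p^{n-2}-1}{p-1}
=
tp^{n-2}+1-\frac{p^{n-2}-1}{p-1},
\]
which is the desired formula. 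This part is purely a routine summation, so I do not expect any obstacle here; the real content has already been extracted in Theorem \ref{symmetric stabilizer}.

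Finally, for the Hausdorff dimension I would invoke formula (\ref{dim-formula}), which gives
\[
\dim_{\Gamma}\overline G
=
(p-1)\liminf_{n\to\infty}\frac{\log_p|G_n|}{p^n}.
\]
Dividing $a_n=tp^{n-2}+1-(p^{n-2}-1)/(p-1)$ by $p^n$ and letting $n\to\infty$, the constant terms and the $-1/(p-1)\cdot(-1)$ piece die, leaving $t/p^2 - 1/((p-1)p^2)$; multiplying by $p-1$ gives
\[
\dim_{\Gamma}\overline G
=
\frac{(p-1)t-1}{p^2},
\]
as claimed. There is genuinely no hard step in this theorem once Theorem \ref{symmetric stabilizer} is in hand — the only thing to be careful about is the arithmetic of the geometric sum and checking the $n=2$ base case is consistent with the closed form.
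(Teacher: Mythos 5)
Your proposal is correct and follows exactly the route the paper intends: the paper's own proof of this theorem consists of the single remark that it is ``completely similar to that of Theorem \ref{non-symmetric}'', i.e.\ induction on $n$ using Theorem \ref{order of G2} for the base case, Lemma \ref{Stab n eta n-1} to reduce $|\Stab_{G_n}(n-1)|$ to $|\Stab_{G_3}(2)|^{p^{n-3}}$, and Theorem \ref{symmetric stabilizer} for the value $p^{t(p-1)-1}$. Your geometric-sum computation and the passage to the Hausdorff dimension via (\ref{dim-formula}) are both accurate.
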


\begin{proof}
The proof is completely similar to that of Theorem \ref{non-symmetric}.
\end{proof}

\section{GGS-groups with constant defining vector}

In this section, we deal with the case where the defining vector
is constant, say $\mathbf{e}=(e,\ldots,e)$, where $e\in\F_p^{\times}$.
Let $m$ be the inverse of $e$ in $\F_p^{\times}$, and $b^*=b^m$.
Then $G=\langle a,b^* \rangle$, and $\psi(b^*)=(a,\ldots,a,b^*)$.
For this reason, we may assume in the remainder of this section
that $\mathbf{e}=(1,\ldots,1)$.

We begin by defining a sequence of elements of $G$ that will be fundamental
in the sequel.
We put $y_0=ba^{-1}$ and, more generally, $y_i=y_0^{a^i}$ for every integer $i$.
Thus $y_i^{a^j}=y_{i+j}$ for all $i,j\in\Z$.
Also,
\begin{equation}
\label{yi conjugate by b}
y_i^b = y_i^{aa^{-1}b} = y_{i+1}^{y_1}.
\end{equation}
Observe that $y_i=y_j$ if $i\equiv j\pmod p$, so that the set $\{y_0,\ldots,y_{p-1}\}$
already contains all the $y_i$.
In the following lemma, we collect some important properties of the elements $y_i$.
We adopt the following convention: given a vector $v$ of length $p$ and an integer $i$,
not lying in the range $\{1,\ldots,p\}$, the $i$th position of $v$ is to be understood
as the $j$th position, where $j\in\{1,\ldots,p\}$ and $i\equiv j\pmod p$.

\begin{lem}
\label{[yi,yj]}
Let $G$ be a GGS-group with constant defining vector.
Then:
\begin{enumerate}
\item
$y_{p-1}y_{p-2}\ldots y_1y_0=1$.
\item
If $z_i$ is the tuple of length $p$ having $y_2$ at position $i-2$,
$y_1^{-1}$ at position $i-1$, and $1$ elsewhere, then
\begin{equation}
\label{formula [yi,yj]}
\psi([y_i,y_j]) = z_i z_j^{-1},
\quad
\text{for every $i$ and $j$.}
\end{equation}
\item
We have
\begin{equation}
\label{decomposition [yi,yj]}
[y_i,y_j] = [y_i,y_{i-1}] [y_{i-1},y_{i-2}] \ldots [y_{j+1},y_j],
\quad
\text{for every $i>j$.}
\end{equation}
\end{enumerate}
\end{lem}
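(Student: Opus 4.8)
The three assertions are largely computational, so the plan is to reduce everything to direct calculations with the maps $\psi$ and the label laws (\ref{label laws}), together with the recursion $\psi(b)=(a,\ldots,a,b)$ that defines $b$ in the constant case. For part (i), the cleanest route is to note that $y_{p-1}y_{p-2}\cdots y_1y_0$ is a conjugate of the product $b^{a^{-1}}b^{a^{-2}}\cdots$, but more directly: by definition $y_i=y_0^{a^i}=(ba^{-1})^{a^i}=a^{-i}ba^{i-1}\cdot a^{-(i-1)}\cdots$; I would instead compute the product telescopically. Writing $y_i=a^{-i}(ba^{-1})a^i=a^{-i}b a^{i-1}$, the product $y_{p-1}\cdots y_0 = (a^{-(p-1)}b a^{p-2})(a^{-(p-2)}b a^{p-3})\cdots(a^{-1}ba^{0})(ba^{-1})$ collapses, since consecutive $a$-powers cancel, to $a^{-(p-1)}b^{p}a^{-1}\cdot(\text{correction})$; one checks the exponents of $a$ sum to $0$ mod $p$ and $b$ has order $p$, so the product is $1$. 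Alternatively, and perhaps more transparently, I would apply $\psi$: each $y_i$ lies in $\Stab_G(1)$, and $\psi(y_0)=\psi(ba^{-1})$, so I can verify that $\psi$ of the product is the identity tuple, using that $\psi$ is injective on $\Stab_G(1)$.

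For part (ii), I would first compute $\psi(y_i)$ explicitly. From $\psi(b)=(a,\ldots,a,b)$ and the conjugation rule $\psi(g^a)=\psi(g)^{(a,\ldots,a)}$ composed with the cyclic shift coming from the rooted automorphism $a$, one obtains that $\psi(y_i)=\psi(b^{a^i}a^{-1})$ has a predictable shape: a tuple that is $a$ in all but two adjacent positions, where it reads $b$ (at position $i$, say) and some compensating power of $a$. The commutator $[y_i,y_j]$ then lies in $\Stab_G(1)'$, hence $\psi([y_i,y_j])$ is a tuple of elements of $G$ with trivial rooted part; expanding $\psi([y_i,y_j])=[\psi(y_i),\psi(y_j)]$ componentwise and using that $[a,a]=1$ and $[a,b]=a^{-1}y_1 a\cdot(\ldots)=y_2 y_1^{-1}$-type identities (more precisely $[a,b]=a^{-1}b^{-1}ab = (b^{-1})^a b = y_1^{-1}\cdot\text{stuff}$), I expect the two surviving entries to be exactly $y_2$ and $y_1^{-1}$ at positions $i-2$ and $i-1$ (for $y_i$) against their counterparts for $y_j$. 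So $\psi([y_i,y_j]) = z_i z_j^{-1}$ as claimed. The bookkeeping of \emph{which} position gets which entry is the delicate point, and I would fix it by testing a single pair, e.g. $[y_1,y_0]$, against the definition $y_0=ba^{-1}$.

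Part (iii) is then purely formal and does not use the tree at all. The identity $[y_i,y_j]=[y_i,y_{i-1}][y_{i-1},y_{i-2}]\cdots[y_{j+1},y_j]$ for $i>j$ is an instance of the general fact that in any group, modulo issues of nonabelianity, a "difference of two endpoints" commutator telescopes \emph{provided the relevant commutators are central enough}; here the honest statement is that $[y_i,y_j]\cdot[y_{j+1},y_j]^{-1}\cdots = [y_i,y_{j+1}]\cdot(\ldots)$, so I would prove it by induction on $i-j$. Actually the slickest proof is via part (ii): apply $\psi$ to both sides. The left side is $z_i z_j^{-1}$; the right side is $(z_i z_{i-1}^{-1})(z_{i-1}z_{i-2}^{-1})\cdots(z_{j+1}z_j^{-1})$, a telescoping product of the \emph{abelian} tuples $z_k$ (they live in $G\times\cdots\times G$ but we only need the product of the $z$'s, and consecutive factors share the same two active positions so they commute there), which collapses to $z_i z_j^{-1}$. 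Since $\psi$ is injective on $\Stab_G(1)$ and both sides lie there, equality follows. The main obstacle throughout is purely notational: keeping the cyclic indexing of positions consistent (the stated convention about positions mod $p$) and correctly tracking the two-entry "signature" of each $\psi(y_i)$; once a single base case is verified by hand, everything else propagates by the cyclic symmetry $y_i^a=y_{i+1}$ and by induction.
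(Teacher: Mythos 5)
Your parts (i) and (iii) follow essentially the paper's route: for (i) the telescoping product $y_{p-1}\cdots y_1y_0=a^{-(p-1)}b^pa^{-1}=1$, and for (iii) the exact telescoping $(z_iz_{i-1}^{-1})(z_{i-1}z_{i-2}^{-1})\cdots(z_{j+1}z_j^{-1})=z_iz_j^{-1}$ together with injectivity of $\psi$ (no commutativity of the $z_k$ is needed, since adjacent factors cancel on the nose). The problem is part (ii), which is the substance of the lemma. Your computation starts from ``$\psi(y_i)$'', but $y_i=a^{-i}ba^{i-1}=a^{-1}b_{i-1}$ is \emph{not} in $\Stab_G(1)$ --- it acts as $a^{-1}$ on the first level --- so $\psi(y_i)$ is not a tuple and the identity $\psi([y_i,y_j])=[\psi(y_i),\psi(y_j)]$ ``expanded componentwise'' has no meaning. (The same false premise appears in your alternative route for (i), where you assert that each $y_i$ lies in $\Stab_G(1)$.) Even after repairing this by working in the wreath product, the commutator of two elements with nontrivial rooted parts is not computed coordinate by coordinate: one must first rewrite $[y_i,y_j]$ as a word in the $b_k$, as the paper does via
\[
[y_k,y_3]=b_{k-1}^{-1}b_1^{-1}b_{k-2}b_2=(b_1^{-1}b_{k-2})^{b_{k-1}}(b_{k-1}^{-1}b_2),
\]
and only then apply $\psi$, using $y_1^{b}=y_2^{y_1}$ to identify the entries.

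Your fallback --- verify a single pair such as $[y_1,y_0]$ and propagate by cyclic symmetry --- does not close this gap either. Conjugation by $a^r$ shifts both indices simultaneously, so from one pair you only reach the pairs with the same difference $i-j$; and you cannot then invoke (iii) to reach the other differences, because your proof of (iii) uses (ii) for arbitrary $i,j$. The paper instead reduces to $[y_k,y_3]$ for \emph{all} $k$ with $4\le k\le p+2$ and computes each one, and there is a genuine case distinction hiding there: when $k=p+2$ the two active blocks wrap around, $\psi((b_1^{-1}b_{k-2})^{b_{k-1}})$ acquires the entry $y_1^{-1}y_2^{-1}y_1$ in its first coordinate, and the claimed form $z_kz_3^{-1}$ only emerges after cancellation against $\psi(b_{k-1}^{-1}b_2)=(y_1^{-1},y_1,1,\ldots,1)$. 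None of this is visible from one hand-checked adjacent pair, so as written your part (ii) is a plausible prediction of the answer rather than a proof of it.
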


\begin{proof}
(i)
We have
\begin{multline*}
y_{p-1}y_{p-2}\ldots y_1y_0
=
a^{-(p-1)}ba^{p-2} \cdot a^{-(p-2)}ba^{p-3} \ldots a^{-1}b \cdot ba^{-1}
\\
=
a^{-(p-1)} b^p a^{-1}
=
1.
\end{multline*}

(ii)
Clearly, it is enough to see the result for $i>j$.
On the other hand, since both sequences $\{y_i\}$ and $\{z_i\}$ are periodic
of period $p$, we may assume that $i$ and $j$ lie in the set $\{3,\ldots,p+2\}$.
If $r=j-3$ and $k=i-r$, then
\[
[y_i,y_j] = [y_k^{a^r},y_3^{a^r}] = [y_k,y_3]^{a^r},
\]
and so $\psi([y_i,y_j])$ is the result of applying to $\psi([y_k,y_3])$
the permutation which moves every element $r$ positions to the right.
It readily follows that it suffices to prove (\ref{formula [yi,yj]}) for
$[y_k,y_3]$ with $4\le k\le p+2$.

Since $y_i=a^{-i}ba^{i-1}=a^{-1}b_{i-1}$ for every $i$, we have
\begin{equation}
\label{[yk,y3]}
[y_k,y_3]
=
b_{k-1}^{-1} a b_2^{-1} b_{k-1} a^{-1} b_2
=
b_{k-1}^{-1} b_1^{-1} b_{k-2} b_2
=
(b_1^{-1} b_{k-2})^{b_{k-1}} (b_{k-1}^{-1} b_2).
\end{equation}
Now, it follows from (\ref{tuplak}) that
\begin{multline*}
\psi((b_1^{-1} b_{k-2})^{b_{k-1}})
=
(y_1^{-1},1,\overset{k-4}{\ldots},1,y_1,1,\ldots,1)^%
{(a,\overset{k-2}{\ldots},a,b,a,\ldots,a)}
\\
=
\begin{cases}
(y_2^{-1},1,\overset{k-4}{\ldots},1,y_2,1,\ldots,1),
& \text{if $4\le k\le p+1$,}
\\
(y_1^{-1}y_2^{-1}y_1,1,\ldots,1,y_2),
& \text{if $k=p+2$.}
\end{cases}
\end{multline*}
Here, we have used that $y_1^b=y_2^{y_1}$ by (\ref{yi conjugate by b}).
Similarly,
\[
\psi(b_{k-1}^{-1} b_2)
=
\begin{cases}
(1,y_1,1,\overset{k-4}{\ldots},1,y_1^{-1},1,\ldots,1),
& \text{if $4\le k\le p+1$,}
\\
(y_1^{-1},y_1,1,\ldots,1),
& \text{if $k=p+2$.}
\end{cases}
\]
By taking these values to (\ref{[yk,y3]}), we obtain that
$\psi([y_k,y_3])=z_kz_3^{-1}$, as desired.

(iii)
This follows immediately from (ii), since
\begin{align*}
\psi([y_i,y_j])
&=
(z_iz_{i-1}^{-1}) (z_{i-1}z_{i-2}^{-1}) \ldots (z_{j+1}z_j^{-1})
\\
&=
\psi([y_i,y_{i-1}]) \psi([y_{i-1},y_{i-2}]) \ldots \psi([y_{j+1},y_j])
\\
&=
\psi([y_i,y_{i-1}] [y_{i-1},y_{i-2}] \ldots [y_{j+1},y_j]).
\end{align*}
\end{proof}

Next we introduce a maximal subgroup $K$ of $G$ that will play a
key role in the determination of the order of $G_n$ in the
case that $\mathbf{e}$ is constant.

\begin{lem}
\label{G weakly regular branch}
Let $G$ be a GGS-group with constant defining vector, and
let $K=\langle ba^{-1} \rangle^G$.
Then:
\begin{enumerate}
\item
$G'\le K$ and $|G:K|=p$.
\item
$K = \langle y_0,y_1,\ldots,y_{p-1} \rangle$ and
$K' = \langle [y_1,y_0] \rangle^G$.
\item
$K'\times \overset{p}{\cdots} \times K'
\subseteq \psi(K')
\subseteq \psi(G')
\subseteq K\times \overset{p}{\cdots} \times K$.
In particular, $G$ is a weakly regular branch group over $K'$.
\item
If $L=\psi^{-1}(K'\times \overset{p}{\cdots} \times K')$ (which, by
(iii), is contained in $K'$), then the conjugates $[y_{i+1},y_i]^{b^j}$,
where $0\le i,j\le p-1$, generate $K'$ modulo $L$.
\end{enumerate}
\end{lem}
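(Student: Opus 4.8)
The plan is to take the four parts in order, noting that (i), (ii) and (iv) are essentially bookkeeping with normal closures and the formulas already established, while the real work is the branch inclusion hidden in (iii).

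For (i): since $y_0=ba^{-1}$, the quotient $G/K$ is generated by $aK$ and $bK=aK$, hence is cyclic, so $G'\le K$; then $K/G'=\langle y_0G'\rangle$ has order $p$, because $G/G'$ is elementary abelian of order $p^2$ by Theorem \ref{Stab(1)} and $y_0G'=bG'(aG')^{-1}\ne1$, whence $|G:K|=p$. For (ii): put $H=\langle y_0,\dots,y_{p-1}\rangle\le K$; conjugation by $a$ permutes the $y_i$ cyclically so $H^a=H$, and since $b=y_0a$ we get $h^b=(h^{y_0})^a\in(H^{y_0})^a=H$ for $h\in H$ (using $y_0\in H$), so $H\trianglelefteq G$ and therefore $K=\langle y_0\rangle^G\le H$, i.e.\ $K=H$. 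For the second equality, $K'$ is characteristic in $K\trianglelefteq G$ hence normal in $G$, so $\langle[y_1,y_0]\rangle^G\le K'$; conversely $K'$ is the normal closure in $K$ of the $[y_i,y_j]$, and by (\ref{decomposition [yi,yj]}) each of these is a product of the elements $[y_{k+1},y_k]=[y_1,y_0]^{a^k}$, all lying in $\langle[y_1,y_0]\rangle^G$, so $K'\le\langle[y_1,y_0]\rangle^G$.

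For (iii): the inclusion $\psi(K')\subseteq\psi(G')$ is just $K'=[K,K]\le[G,G]=G'$. For $\psi(G')\subseteq K\times\cdots\times K$, I would note that $M:=\psi^{-1}(K\times\cdots\times K)$ is normal in $\Stab_G(1)$ (since $K\times\cdots\times K$ is normal in $G\times\cdots\times G\supseteq\psi(\Stab_G(1))$) and is $a$-invariant (conjugation by $a$ permutes coordinates), hence $M\trianglelefteq G$; and by (\ref{tuplak}) one computes $\psi([a,b])=\psi(b_1^{-1}b_0)=(y_1^{-1},1,\dots,1,y_1)\in K\times\cdots\times K$, so $[a,b]\in M$ and thus $G'=\langle[a,b]\rangle^G\le M$. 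The remaining inclusion $K'\times\cdots\times K'\subseteq\psi(K')$ is the crux: by Proposition \ref{normal closure} (applied with $L=N=K'$ and $X=\{[y_1,y_0]\}$, using part (ii)) it suffices to exhibit a single element of $K'$ whose image under $\psi$ is $([y_1,y_0],1,\dots,1)$. I expect this to be the main obstacle of the whole lemma. The idea is to build it as an explicit product of commutators $[y_i,y_j]$ and their conjugates: by Lemma \ref{[yi,yj]}(ii) one has $\psi([y_i,y_j])=z_iz_j^{-1}$ with the $z_i$ supported on only two consecutive coordinates, so a carefully chosen such product can be telescoped down to a tuple supported in a single coordinate, using the relation $y_{p-1}\cdots y_1y_0=1$ and the identity (\ref{decomposition [yi,yj]}) of Lemma \ref{[yi,yj]} to control cancellations; here Lemma \ref{gamma3} is unavailable, which is precisely what makes the constant case harder. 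Once this element is in hand, Proposition \ref{normal closure} gives the full direct power and, since $K'\ne1$, that $G$ is weakly regular branch over $K'$.

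For (iv): by (iii), $\psi(L)=K'\times\cdots\times K'\subseteq\psi(K')$, and injectivity of $\psi$ gives $L\le K'$. Next, for $c\in K'$ and $d\in G'$ the commutator $[c,d]$ lies in $\Stab_G(1)$ with $\psi([c,d])=[\psi(c),\psi(d)]\in[K\times\cdots\times K,\,K\times\cdots\times K]=K'\times\cdots\times K'$, using that $\psi(c),\psi(d)\in K\times\cdots\times K$ by (iii); hence $[K',G']\le L$. Therefore the conjugation action of $G$ on $K'/L$ is trivial on $G'$ and factors through $G/G'=\langle aG'\rangle\times\langle bG'\rangle$; since $K'=\langle[y_1,y_0]\rangle^G$, the quotient $K'/L$ is generated by the classes of $[y_1,y_0]^{a^ib^j}=[y_{i+1},y_i]^{b^j}$ for $0\le i,j\le p-1$, which is exactly the assertion. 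Thus, apart from the concrete branch computation in (iii), everything reduces to the general properties of GGS-groups and of $\psi$ proved earlier.
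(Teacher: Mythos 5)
Parts (i), (ii) and (iv) of your proposal are correct and essentially coincide with the paper's arguments: (i) and (ii) are the same normal-closure bookkeeping, the two outer inclusions of (iii) are proved by the same computation ($\psi([a,b])=(y_1^{-1},1,\ldots,1,y_1)$ together with normality of $K\times\cdots\times K$), and in (iv) your observation that $[K',G']\le L$, via $[\psi(c),\psi(d)]\in[K\times\cdots\times K,\,K\times\cdots\times K]=K'\times\cdots\times K'$, is exactly the mechanism the paper uses when it discards the factor $[y_1,y_0,h]$ with $h\in G'$ modulo $L$.

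The genuine gap is the inclusion $K'\times\cdots\times K'\subseteq\psi(K')$. You correctly reduce it, via Proposition \ref{normal closure} and part (ii), to exhibiting an element of $K'$ whose image under $\psi$ is $([y_1,y_0],1,\ldots,1)$, but you then only gesture at a ``telescoping'' of products of the tuples $z_iz_j^{-1}$, and this does not work as described: every coordinate of such a product is a word in $y_1^{\pm1},y_2^{\pm1}$, and no amount of cancellation among the $z_i$ produces the specific commutator $[y_1,y_0]$ in one slot and the identity in all others. The missing idea in the paper is to take a \emph{commutator of two elements of $K'$} whose $\psi$-images have supports meeting in a single coordinate: for $p\ge 5$, $\psi([y_1,y_2])=(y_1,1,\ldots,1,y_2,y_1^{-1}y_2^{-1})$ is supported on positions $1,p-1,p$ and $\psi([y_3,y_4])=(y_2,y_1^{-1}y_2^{-1},y_1,1,\ldots,1)$ on positions $1,2,3$, so $\psi([[y_3,y_4],[y_1,y_2]])=([y_2,y_1],1,\ldots,1)$, and conjugating by $b^{-1}$ gives $([y_1,y_0],1,\ldots,1)$. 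For $p=3$ this support-disjointness fails and the paper needs a separate explicit computation (using $y_2y_1y_0=1$ to get $([y_1,y_0],1,1)=\psi([y_0,y_1]^{ba}[y_1,y_0])$), a case your sketch does not address at all. Without this construction, part (iii) is unproved, and with it also the conclusion that $G$ is weakly regular branch over $K'$ and the step $L\le K'$ on which your proof of (iv) relies.
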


\begin{proof}
(i)
Since $[a,ba^{-1}]=[a,b]^{a^{-1}}\in K$ and $K$ is normal in $G$, it follows
that $G'$ is contained in $K$.
Then $|G:K|=|G/G':K/G'|=p$.

(ii)
Let us first prove that $K=\langle y_0,y_1,\ldots,y_{p-1} \rangle$.
For this purpose, it suffices to see that
$N=\langle y_0,y_1,\ldots,y_{p-1} \rangle$ is a normal subgroup of $G$.
This is clear, since $y_i^a=y_{i+1}$ and $y_i^b=y_{i+1}^{y_1}$ for every $i$.

It follows that
\[
K' = \langle [y_i,y_j] \mid 0\le j<i \le p-1 \rangle^K
= \langle [y_i,y_j] \mid 0\le j<i \le p-1 \rangle^G,
\]
where the second equality holds because $K'$ is normal in $G$.
By (\ref{decomposition [yi,yj]}), every commutator $[y_i,y_j]$ with $0\le j<i \le p-1$
can be expressed in terms of the $[y_k,y_{k-1}]$ with $k=1,\ldots,p-1$.
Since $[y_k,y_{k-1}]=[y_1,y_0]^{a^{k-1}}$, we conclude that
$K'=\langle [y_1,y_0] \rangle^G$.

(iii)
Let us first prove the inclusion $\psi(G')\subseteq K\times \overset{p}{\cdots} \times K$.
We have
\begin{multline*}
\psi([b,a])
=
\psi(b^{-1}b^a)
=
(a^{-1},a^{-1},\ldots,a^{-1},b^{-1}) (b,a,\ldots,a,a)
\\
=
(a^{-1}b,1,\ldots,1,b^{-1}a)\in K\times \overset{p}{\cdots} \times K.
\end{multline*}
Now, since $K$ is normal in $G$, it readily follows that
\[
\psi([b,a]^g)\in K\times \overset{p}{\cdots} \times K,
\quad
\text{for every $g\in G$.}
\]
This proves the desired inclusion.

Now we focus on proving that
$K'\times \overset{p}{\cdots} \times K'\subseteq \psi(K')$.
By Proposition \ref{normal closure} and (ii), it suffices to see that
\[
([y_1,y_0],1,\ldots,1)\in \psi(K').
\]
We consider separately the cases $p\ge 5$ and $p=3$.

Suppose first that $p\ge 5$.
By using (\ref{formula [yi,yj]}), we have
\[
\psi([y_1,y_2]) = (y_1,1,\ldots,1,y_2,y_1^{-1}y_2^{-1})
\]
and
\[
\psi([y_3,y_4]) = (y_2,y_1^{-1}y_2^{-1},y_1,1,\ldots,1).
\]
If $k=[[y_3,y_4],[y_1,y_2]]$, it follows that
\[
\psi(k) = ([y_2,y_1],1,\ldots,1),
\]
since $p\ge 5$.
Hence
\[
([y_1,y_0],1,\ldots,1) = \psi(k^{b^{-1}}) \in \psi(K'),
\]
as desired.

Assume now that $p=3$.
We have
\[
\psi([y_1,y_0]) = (y_1y_0,y_0^{-1},y_1^{-1}),
\]
since $y_2y_1y_0=1$, by (i) of Lemma \ref{[yi,yj]}.
Hence
\begin{align*}
\psi([y_0,y_1]^b)
&=
(y_0^{-1}y_1^{-1},y_0,y_1)^{(a,a,b)}
=
(y_1^{-1}y_2^{-1},y_1,y_1^b)
\\
&=
((y_2y_1)^{-1},y_1,y_2^{y_1})
=
(y_0,y_1,(y_0^{-1}y_1^{-1})^{y_1})
\\
&=
(y_0,y_1,y_1^{-1}y_0^{-1}),
\end{align*}
and
\[
([y_1,y_0],1,1) = \psi([y_0,y_1]^{ba}[y_1,y_0]) \in \psi(K'),
\]
which completes the proof.

(iv)
Let us consider an arbitrary element $g\in G$, and let us write
$g=ha^ib^j$, for some $i,j\in\Z$, $h\in G'$.
Then
\[
[y_1,y_0]^g = ([y_1,y_0][y_1,y_0,h])^{a^ib^j}
\equiv
[y_1,y_0]^{a^ib^j} = [y_{i+1},y_i]^{b^j}
\pmod{L},
\]
since $\psi([y_1,y_0,h])\in \psi(G'')
\subseteq K'\times \overset{p}{\cdots} \times K'$ by (iii).
Now, since the conjugates $[y_1,y_0]^g$ generate $K'$ by (ii), the
result follows.
\end{proof}

In the following results, we consider the action of an element of $G$
by conjugation as an endomorphism of $K/K'$, which allows us to multiply
several conjugates of an element of $K$, modulo $K'$, by adding the
elements by which we are conjugating.
This gives a meaning to expressions like $g^{1+a+\cdots+a^{p-1}}\in K'$
for an element $g\in K$.

\begin{lem}
\label{sum of powers of a}
Let $G$ be a GGS-group with constant defining vector, and
let $K=\langle ba^{-1} \rangle^G$.
If $g\in K$ then
\[
g^{1+a+\cdots+a^{p-1}} \in K'.
\]
\end{lem}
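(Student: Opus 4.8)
The statement to prove is Lemma~\ref{sum of powers of a}: if $G$ is a GGS-group with constant defining vector and $K=\langle ba^{-1}\rangle^G$, then for every $g\in K$ we have $g^{1+a+\cdots+a^{p-1}}\in K'$.

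My plan is as follows. Working modulo $K'$, the group $K/K'$ is abelian and, since $K=\langle y_0,\ldots,y_{p-1}\rangle$ by Lemma~\ref{G weakly regular branch}(ii) with $y_i=y_0^{a^i}$, it is generated by the images of the $y_i$. So it suffices to check the claim on a generating set, and since conjugation by $a$ is an automorphism commuting with the operator $1+a+\cdots+a^{p-1}$, it is in fact enough to verify that $y_0^{1+a+\cdots+a^{p-1}}\in K'$. Now $y_0^{1+a+\cdots+a^{p-1}} = y_0\, y_0^a\cdots y_0^{a^{p-1}} = y_0 y_1 \cdots y_{p-1}$ modulo $K'$, and by Lemma~\ref{[yi,yj]}(i) we know that $y_{p-1}y_{p-2}\cdots y_1 y_0 = 1$ exactly in $G$. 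The difference between this product and $y_0 y_1\cdots y_{p-1}$ is only a reordering of the factors, which changes the value by a product of commutators of the $y_i$, hence by an element of $\langle [y_i,y_j]\rangle$, and this is contained in $K'$ by Lemma~\ref{G weakly regular branch}(ii). Therefore $y_0^{1+a+\cdots+a^{p-1}}\in K'$.

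To spell out the last point cleanly: in the abelian quotient $K/K'$ the order of the factors in any product of the $y_i$ is irrelevant, so $y_0 y_1\cdots y_{p-1}$ and $y_{p-1}y_{p-2}\cdots y_1 y_0$ have the same image in $K/K'$; since the latter equals $1$ in $G$, the former lies in $K'$. Then for an arbitrary $g\in K$, write $g\equiv \prod_i y_i^{n_i}\pmod{K'}$; applying the operator $1+a+\cdots+a^{p-1}$ and using that it is additive on $K/K'$ together with $y_i^{a}=y_{i+1}$, we get $g^{1+a+\cdots+a^{p-1}}\equiv \prod_i (y_0 y_1\cdots y_{p-1})^{n_i}\pmod{K'}$, which is trivial modulo $K'$.

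I do not expect a serious obstacle here; the only thing requiring a little care is making precise the reduction to checking the statement on generators, i.e.\ confirming that the operator $1+a+\cdots+a^{p-1}$ is well defined on $K/K'$ (which is exactly the remark preceding the lemma in the excerpt) and that it is additive there, so that verifying it on the $y_i$ suffices. Everything else is a direct consequence of Lemma~\ref{[yi,yj]}(i) and the fact that commutators of the $y_i$ lie in $K'$.
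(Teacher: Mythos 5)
Your proof is correct and follows essentially the same route as the paper: both reduce, via the fact that $g\mapsto g^{1+a+\cdots+a^{p-1}}K'$ is a homomorphism into the abelian group $K/K'$, to checking the claim on the generators $y_i$, and then invoke Lemma~\ref{[yi,yj]}(i) together with the commutativity of $K/K'$ to see that the product of all the $y_i$ (in any order) is trivial modulo $K'$. Your extra observation that it suffices to treat $y_0$ alone, since the operator commutes with conjugation by $a$, is a harmless minor shortcut.
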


\begin{proof}
The map $R$ sending $g\in K$ to $g^{1+a+\cdots+a^{p-1}}K'$ is a well-defined
homomorphism from $K$ to $K/K'$, and we want to see that $R$ is the
trivial homomorphism.
Since $K=\langle y_0,\ldots,y_{p-1} \rangle$ by (ii) of
Lemma \ref{G weakly regular branch}, it suffices to check that
$y_i\in\ker R$ for every $i$.
Now,
\[
R(y_i) = y_iy_{i+1} \ldots y_{p-1}y_0 \ldots y_{i-1} K'
= y_{p-1}y_{p-2}\ldots y_1y_0 K'
= K'
\]
by (i) of Lemma \ref{[yi,yj]}, and we are done.
\end{proof}

\begin{lem}
\label{conditions to be in K'}
Let $G$ be a GGS-group with constant defining vector, and
let $K=\langle ba^{-1} \rangle^G$.
If $g\in K'$ and we write $\psi(g)=(g_1,\ldots,g_p)$, then:
\begin{enumerate}
\item
$g_pg_{p-1}\ldots g_1\in K'$.
\item
$\prod_{i=1}^{p-1} \, g_i^{a+a^2+\cdots+a^i} \in K'$.
\end{enumerate}
Similarly, if $g\in K'\Stab_G(n)$ for some $n\ge 1$, then both
$g_pg_{p-1}\ldots g_1$ and $\prod_{i=1}^{p-1} \, g_i^{a+a^2+\cdots+a^i}$
lie in $K'\Stab_G(n-1)$.
\end{lem}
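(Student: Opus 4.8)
The plan is to restate conditions (i) and (ii) as the triviality of two homomorphisms out of $K'$, and to verify this triviality on the generating set of $K'$ furnished by Lemma \ref{G weakly regular branch}(iv). Write $M=K/K'$, an abelian group on which conjugation by $a$ induces an automorphism $\bar a$; note that $\bar a^p=1$ and, by Lemma \ref{sum of powers of a}, that $\bar a+\bar a^2+\cdots+\bar a^p=0$ as an operator on $M$. First I would introduce two maps $\mu_1,\mu_2\colon K\times\overset{p}{\cdots}\times K\to M$ by
\[
\mu_1(g_1,\dots,g_p)=g_pg_{p-1}\cdots g_1K',
\qquad
\mu_2(g_1,\dots,g_p)=\Bigl(\,\prod_{i=1}^{p-1}g_i^{a+a^2+\cdots+a^i}\Bigr)K'.
\]
Because $M$ is abelian and $K\trianglelefteq G$, these are well-defined homomorphisms; moreover they factor through the classes of the $g_i$ modulo $K'$, and they are $\bar a$-equivariant, meaning $\mu_j(g_1^a,\dots,g_p^a)=\bar a\cdot\mu_j(g_1,\dots,g_p)$. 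By Lemma \ref{G weakly regular branch}(iii) we have $\psi(K')\subseteq K\times\overset{p}{\cdots}\times K$, so the composites $\Phi_j=\mu_j\circ\psi\colon K'\to M$ are homomorphisms, and assertions (i) and (ii) say exactly that $\Phi_1$ and $\Phi_2$ are trivial.

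By Lemma \ref{G weakly regular branch}(iv), $K'$ is generated by $L=\psi^{-1}(K'\times\overset{p}{\cdots}\times K')$ together with the conjugates $[y_{i+1},y_i]^{b^j}$ for $0\le i,j\le p-1$, so it is enough to check that $\Phi_1$ and $\Phi_2$ kill these generators. For $g\in L$ all components of $\psi(g)$ lie in $K'$, whence $\mu_j(\psi(g))=\overline 1$; thus $\Phi_j$ is trivial on $L$. For the conjugates, I would start from $\psi([y_{i+1},y_i])=z_{i+1}z_i^{-1}=:w^{(i)}$ given by Lemma \ref{[yi,yj]}(ii), the tuple carrying $y_2^{-1}$ in position $i-2$, $y_2y_1$ in position $i-1$, $y_1^{-1}$ in position $i$, and $1$ elsewhere. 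Conjugating by $b^j$ uses $\psi(b^j)=(a^j,\dots,a^j,b^j)$; since $b=y_0a$ with $y_0\in K$, conjugation by $b$ agrees with conjugation by $a$ on $M$, so modulo $K'\times\overset{p}{\cdots}\times K'$ the tuple $\psi([y_{i+1},y_i]^{b^j})$ is the componentwise $a^j$-conjugate of $w^{(i)}$. As $\mu_j$ only sees classes modulo $K'$ and is $\bar a$-equivariant, this yields $\Phi_j([y_{i+1},y_i]^{b^j})=\bar a^j\cdot\mu_j(w^{(i)})$, so everything reduces to proving $\mu_1(w^{(i)})=\mu_2(w^{(i)})=\overline 1$ for all $i$.

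The first equality is clear, since $\mu_1(w^{(i)})=\overline{y_2^{-1}\cdot y_2y_1\cdot y_1^{-1}}=\overline 1$. For the second, writing $S_m=\bar a+\bar a^2+\cdots+\bar a^m$ one finds
\[
\mu_2(w^{(i)})=(S_{i-1}-S_{i-2})\,\overline{y_2}+(S_{i-1}-S_i)\,\overline{y_1}=\bar a^{i-1}\,\overline{y_2}-\bar a^i\,\overline{y_1}=\overline{y_{i+1}}-\overline{y_{i+1}}=\overline 1 ,
\]
where the cases in which one of the positions $i-2,i-1,i$ is $\equiv 0\pmod p$ are handled uniformly by the relations $\bar a^p=1$ and $\bar a+\cdots+\bar a^p=0$ on $M$ (equivalently, $y_{p-1}\cdots y_1y_0=1$, from Lemma \ref{[yi,yj]}(i)). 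This shows $\Phi_1$ and $\Phi_2$ vanish on $K'$, that is, (i) and (ii). The only delicate point I anticipate is precisely this bookkeeping with the partial sums $S_m$ around the wrap-around; the rest is formal.

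For the statement modulo $\Stab_G(n)$, take $g\in K'\Stab_G(n)$ with $n\ge 1$ and write $g=g's$ with $g'\in K'$ and $s\in\Stab_G(n)$; both factors lie in $\Stab_G(1)$, and by (\ref{psi on kth stabilizer}) every component of $\psi(s)$ lies in $\Stab_G(n-1)$, so componentwise $g_i\equiv g'_i\pmod{\Stab_G(n-1)}$. Since $\Stab_G(n-1)$ is normal in $G$, reduction modulo it sends $g_pg_{p-1}\cdots g_1$ to $g'_pg'_{p-1}\cdots g'_1$ and $\prod_{i=1}^{p-1}g_i^{a+a^2+\cdots+a^i}$ to $\prod_{i=1}^{p-1}(g'_i)^{a+a^2+\cdots+a^i}$, and these lie in $K'$ by parts (i) and (ii). Hence both products lie in $K'\Stab_G(n-1)$, as claimed.
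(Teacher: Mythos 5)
Your proof is correct and follows essentially the same route as the paper's: both recast (i) and (ii) as the vanishing of two homomorphisms from $K'$ to $K/K'$ (the paper's $P$ and $Q$, your $\mu_1$ and $\mu_2$), reduce to the generators $[y_{i+1},y_i]^{b^j}$ of $K'$ modulo $L$ by noting that conjugation by $b$ agrees with conjugation by $a$ on $K/K'$, and invoke Lemma \ref{sum of powers of a} to handle the wrap-around, with an identical argument for the statement modulo $\Stab_G(n)$. The only cosmetic difference is that the paper evaluates $P$ and $Q$ on each tuple $z_i$ separately and observes the values are independent of $i$, whereas you evaluate directly on $z_{i+1}z_i^{-1}$.
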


\begin{proof}
We first deal with the case that $g\in K'$.
Let us consider the following two maps:
\[
\begin{matrix}
P & : & K\times \overset{p}{\cdots} \times K & \longrightarrow & K/K'
\\
& & (g_1,\ldots,g_p) & \longmapsto & g_p\ldots g_1 K',
\end{matrix}
\]
and
\[
\begin{matrix}
Q & : & K\times \overset{p}{\cdots} \times K & \longrightarrow & K/K'
\\
& & (g_1,\ldots,g_p) & \longmapsto
& \prod_{i=1}^{p-1} \, g_i^{a+a^2+\cdots+a^i} K'.
\end{matrix}
\]
Clearly, $P$ and $Q$ are homomorphisms.
By (iii) of Lemma \ref{G weakly regular branch}, $\psi(K')$ is contained in
the domain of $P$ and $Q$, and our goal is to prove that it is actually
in the kernels of these maps.
Since the image of $K'\times \overset{p}{\cdots} \times K'$ is trivial,
it suffices to see that $\psi(g)\in \ker P$ and $\psi(g)\in \ker Q$ for every $g$
in a system of generators of $K'$ modulo $L$, where
$L=\psi^{-1}(K'\times \overset{p}{\cdots} \times K')$.
By (iv) of Lemma \ref{G weakly regular branch}, the conjugates
$[y_{i+1},y_i]^{b^j}$, for $i,j=0,\ldots,p-1$ constitute such a set of generators.

Let $c\in\Gamma$ be defined by means of $\psi(c)=(a,a,\ldots,a)$.
We claim that
\begin{equation}
\label{b and c}
g^b \equiv g^c \pmod{L},
\quad
\text{for every $g\in K'$.}
\end{equation}
Indeed, we have $\psi(b)=\psi(c) (1,\ldots,1,a^{-1}b)$, and so
\begin{multline*}
\psi(g^b) = \psi(g^c)^{(1,\ldots,1,a^{-1}b)}
= \psi(g^c) [\psi(g^c),(1,\ldots,1,a^{-1}b)]
\\
\equiv
\psi(g^c) \pmod{K'\times \overset{p}{\cdots} \times K'},
\end{multline*}
since $\psi(g^c)\in K\times \overset{p}{\cdots} \times K$ and $a^{-1}b\in K$.

As a consequence of (\ref{b and c}), it suffices to see that
$\psi([y_{i+1},y_i]^{c^j})$ lies in both $\ker P$ and $\ker Q$.
Since
\[
P(\psi([y_{i+1},y_i]^{c^j}))=P(\psi([y_{i+1},y_i]))^{a^j}
\]
and
\[
Q(\psi([y_{i+1},y_i]^{c^j}))=Q(\psi([y_{i+1},y_i]))^{a^j},
\]
we have reduced ourselves to proving that $\psi([y_{i+1},y_i])$ is in the
kernel of $P$ and $Q$ for every $i$.
According to (\ref{formula [yi,yj]}), we have $\psi([y_{i+1},y_i])=z_{i+1}z_i^{-1}$,
with $z_i$ as defined in Lemma \ref{[yi,yj]}.
Now, one can easily check that
\[
P(z_i)=y_1^{-1}y_2K'
\quad
\text{and}
\quad
Q(z_i)=y_2^{-1}K'
\quad
\text{for every $i$,}
\]
where in the case of $Q$ and $i=1$ we need to use that
\[
y_2^{a+a^2+\cdots+a^{p-1}}\equiv y_2^{-1} \pmod{K'},
\]
by Lemma \ref{sum of powers of a}.
It readily follows that $\psi([y_{i+1},y_i])$ lies in both $\ker P$ and $\ker Q$,
as desired.

Assume now that $g\in K'\Stab_G(n)$, and let us write $g=fh$, with $f\in K'$
and $h\in\Stab_G(n)$.
Put $\psi(f)=(f_1,\ldots,f_p)$ and $\psi(h)=(h_1,\ldots,h_p)$.
Since $h_1,\ldots,h_p\in\Stab_G(n-1)$, which is a normal subgroup of $G$,
we have
\[
g_p\ldots g_1 = f_ph_p \ldots f_1h_1 = f_p \ldots f_1 h^*,
\]
for some $h^*\in\Stab_G(n-1)$.
Since $f\in K'$, we already know that $f_p\ldots f_1\in K'$, and so
we conclude that $g_p\ldots g_1\in K'\Stab_G(n-1)$, as desired.
The second assertion can be proved in a similar way.
\end{proof}

\begin{thm}
\label{isomorphisms K}
Let $G$ be a GGS-group with constant defining vector, and
let $K=\langle ba^{-1} \rangle^G$ and
$L=\psi^{-1}(K'\times \overset{p}{\cdots} \times K')$.
Then the following isomorphisms hold:
\[
K'/L \cong K/K' \times \overset{p-2}{\cdots} \times K/K',
\]
and
\[
K'\Stab_G(n)/L\Stab_G(n)
\cong
K/K'\Stab_G(n-1) \times \overset{p-2}{\cdots} \times K/K'\Stab_G(n-1),
\]
for every $n\ge 3$.
\end{thm}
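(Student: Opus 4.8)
The plan is to use $\psi$ to embed $K'/L$ and $K'\Stab_G(n)/L\Stab_G(n)$ into $(K/K')^p$ and $(K/K'\Stab_G(n-1))^p$, to identify the images with the common kernel of two explicit homomorphisms, and to observe that this common kernel is visibly a direct product of $p-2$ copies of the relevant quotient. Throughout, the reduction $\mathbf{e}=(1,\ldots,1)$ is kept. For $n\ge 2$ one has $\Stab_G(n)\le\gamma_3(G)\le G'\le K$ by Corollary \ref{Stab(2)} and Lemma \ref{G weakly regular branch}(i), so $K'$ and $K'\Stab_G(n)$ lie in $\Stab_G(1)$, where $\psi$ is injective; moreover $\psi(K')\subseteq K\times\overset{p}{\cdots}\times K$ by Lemma \ref{G weakly regular branch}(iii), and for $n\ge 3$ also $\psi(\Stab_G(n))\subseteq\Stab_G(n-1)\times\overset{p}{\cdots}\times\Stab_G(n-1)\subseteq K\times\overset{p}{\cdots}\times K$ by (\ref{psi on kth stabilizer}); this is where the hypothesis $n\ge 3$ enters. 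Composing $\psi$ with coordinatewise reduction modulo $K'$, respectively modulo $K'\Stab_G(n-1)$, we obtain homomorphisms
\[
\bar\psi\colon K'/L\longrightarrow (K/K')^p
\qquad\text{and}\qquad
\bar\psi_n\colon K'\Stab_G(n)/L\Stab_G(n)\longrightarrow (K/K'\Stab_G(n-1))^p .
\]
The map $\bar\psi$ is injective by the definition of $L$ (recall $L\le K'$ by Lemma \ref{G weakly regular branch}(iv)); and $\bar\psi_n$ is injective because, given $g\in K'\Stab_G(n)$ all of whose $\psi$-components lie in $K'\Stab_G(n-1)$, the weak branch inclusion $K'\times\overset{p}{\cdots}\times K'\subseteq\psi(K')$ lets one split off an element of $L$ and so reduce $g$, modulo $L\Stab_G(n)$, to an element whose $\psi$-components all lie in $\Stab_G(n-1)$, i.e.\ to an element of $\Stab_G(n)$.

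By Lemma \ref{conditions to be in K'}, the image of $\bar\psi$ is contained in $\ker P\cap\ker Q$, where $P,Q\colon(K/K')^p\to K/K'$ are the homomorphisms induced by those of that lemma, namely $P(g_1,\ldots,g_p)=g_1\cdots g_p$ and $Q(g_1,\ldots,g_p)=\prod_{i=1}^{p-1}g_i^{a+a^2+\cdots+a^i}$ with $a$ acting by conjugation; similarly the image of $\bar\psi_n$ lies in the common kernel of the corresponding maps $P_n,Q_n$ over $K/K'\Stab_G(n-1)$. The crucial point is that $g_p$ does not occur in $Q$, occurs in $P$ with coefficient $1$, and $g_{p-1}$ occurs in $Q$ with coefficient $a+a^2+\cdots+a^{p-1}=(1+a+\cdots+a^{p-1})-1$, which acts on $K/K'$ as $-1$ by Lemma \ref{sum of powers of a}, hence invertibly. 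Therefore, for arbitrary $g_1,\ldots,g_{p-2}$ the relation $Q=0$ determines $g_{p-1}$ uniquely and then $P=0$ determines $g_p$ uniquely, so the projection onto the first $p-2$ coordinates is an isomorphism $\ker P\cap\ker Q\cong (K/K')^{p-2}$. The same holds over $K/K'\Stab_G(n-1)$, since $1+a+\cdots+a^{p-1}$ still annihilates that quotient. Thus $K'/L$ embeds into $(K/K')^{p-2}$ and $K'\Stab_G(n)/L\Stab_G(n)$ into $(K/K'\Stab_G(n-1))^{p-2}$.

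What remains, and what I expect to be the heart of the matter, is that these embeddings are onto. For $\bar\psi$, recall from Lemma \ref{G weakly regular branch}(iv) that $K'$ is generated modulo $L$ by the conjugates $[y_{i+1},y_i]^{b^j}$, $0\le i,j\le p-1$. By (\ref{formula [yi,yj]}), $\psi([y_{i+1},y_i])=z_{i+1}z_i^{-1}$ is supported on the three consecutive positions $i-2,i-1,i$, and by (\ref{b and c}) conjugation by $b$ on $K'$ agrees, modulo $K'\times\overset{p}{\cdots}\times K'$, with coordinatewise conjugation by $a$; hence $\bar\psi([y_{i+1},y_i]^{b^j})$ is the coordinatewise $a^j$-twist of the reduction of $z_{i+1}z_i^{-1}$. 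One then has to show that, as $i$ and $j$ range, these elements already generate all of $\ker P\cap\ker Q$; equivalently, under the identification $\ker P\cap\ker Q\cong (K/K')^{p-2}$, that they span it. This is a concrete computation in the module $K/K'$ over the group ring of $\langle a\rangle$: one must find suitable $\langle a\rangle$-combinations of the reductions of the $z_{i+1}z_i^{-1}$, exploiting the periodicity $z_p=z_0$, the telescoping identity (\ref{decomposition [yi,yj]}), and the action of $a$, that recover the explicit free generators of $\ker P\cap\ker Q$ produced above; and, as with several earlier lemmas, the case $p=3$ may have to be treated separately.

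Once $\bar\psi$ is known to be onto $\ker P\cap\ker Q$, the second isomorphism follows by reduction modulo $\Stab_G(n-1)$: the same conjugates $[y_{i+1},y_i]^{b^j}$ generate $K'\Stab_G(n)$ modulo $L\Stab_G(n)$, the coordinatewise reduction $\ker P\cap\ker Q\to\ker P_n\cap\ker Q_n$ is surjective (lift the first $p-2$ coordinates arbitrarily and recompute the last two from the formulas above), and this is compatible with $\bar\psi$ and $\bar\psi_n$; hence $\bar\psi_n$ is onto $\ker P_n\cap\ker Q_n\cong (K/K'\Stab_G(n-1))^{p-2}$, which together with its injectivity gives the asserted isomorphism.
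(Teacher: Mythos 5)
Your framework is essentially the paper's own: compose $\psi$ with coordinatewise reduction modulo $K'$ (resp.\ $K'\Stab_G(n-1)$), use Lemma \ref{conditions to be in K'} together with Lemma \ref{sum of powers of a} to see that the last two coordinates are determined by the first $p-2$, and reduce the second isomorphism to the first. Your reformulation via $\ker P\cap\ker Q$ and its projection onto the first $p-2$ coordinates is a harmless repackaging of the paper's map $R=\pi\circ\psi$ (the paper phrases the same computation as $\ker R=L$), and your injectivity arguments are correct — including the one for $\bar\psi_n$, where splitting off an element of $L$ via $\psi(L)=K'\times\overset{p}{\cdots}\times K'$ replaces, quite cleanly, the paper's Dedekind-law computation of $\ker R_n=L\Stab_G(n)\cap K'$.

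The genuine gap is surjectivity, which you yourself call ``the heart of the matter'' and then leave as a programme (``one must find suitable $\langle a\rangle$-combinations\ldots''). Without it you only obtain an embedding $K'/L\hookrightarrow (K/K')^{p-2}$, which proves neither isomorphism. The missing step is shorter and more elementary than your plan suggests, and does not require recovering explicit free generators of $\ker P\cap\ker Q$ or treating $p=3$ separately: since the image is already known to lie in $\ker P\cap\ker Q$, it suffices to hit the first $p-2$ coordinates arbitrarily. By (\ref{formula [yi,yj]}) one has $\psi([y_1,y_2])=(y_1,1,\ldots,1,h_{p-1},h_p)$, with the only possibly nontrivial tail entries in positions $p-1$ and $p$, which the projection discards; conjugating by $\psi(b^{i-1})=(a^{i-1},\ldots,a^{i-1},b^{i-1})$ gives first coordinate $y_1^{a^{i-1}}=y_i$, whence the image of $R$ contains $K/K'\times\{\overline 1\}\times\cdots\times\{\overline 1\}$ because $K=\langle y_0,\ldots,y_{p-1}\rangle$. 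Conjugating by powers of $a$ and running the triangular argument of Proposition \ref{normal closure} then fills the remaining positions. Until you supply this (or an equivalent) computation, the proof is incomplete.
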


\begin{proof}
Let $\pi$ be the map given by
\[
\begin{matrix}
K\times \overset{p}{\cdots} \times K & \longrightarrow
& K/K'\times \overset{p-2}{\cdots} \times K/K'
\\
(g_1,\ldots,g_p) & \longmapsto & (g_1K',\ldots,g_{p-2}K'),
\end{matrix}
\]
and let $R$ be the composition of
$\psi:K'\longrightarrow K\times \overset{p}{\cdots} \times K$
with $\pi$.
If we see that $R$ is surjective, and that $\ker R=L$, then
the first isomorphism of the statement follows.

Let $g\in K'$ be an element lying in $\ker R$.
If $\psi(g)=(g_1,\ldots,g_p)$, then we have $g_1,\ldots,g_{p-2}\in K'$.
By (ii) of Lemma \ref{conditions to be in K'}, it follows that
\[
g_{p-1}^{a+\cdots+a^{p-1}} \in K',
\]
and by applying Lemma \ref{sum of powers of a}, we get $g_{p-1}\in K'$.
Now, (i) of Lemma \ref{conditions to be in K'} immediately yields that
also $g_p\in K'$.
This proves that $\ker R=L$.

Now we prove that
\begin{equation}
\label{surjective onto first}
K/K'\times \{ \overline 1 \} \times \cdots \times \{ \overline 1 \}
\subseteq R(K').
\end{equation}
Then, by arguing as in the proof of Proposition \ref{normal closure},
it follows that $R$ is surjective.
By (\ref{formula [yi,yj]}), we have
\[
\psi([y_1,y_2])
=
(y_1,1,\ldots,1,h_{p-1},h_p)
\]
for some elements $h_{p-1},h_p\in K$.
Hence
\[
\psi([y_1,y_2]^{b^{i-1}})
=
(y_i,1,\ldots,1,h_{p-1}^*,h_p^*)
\]
for every $i$, and we are done, since $K=\langle y_0,\ldots,y_{p-1} \rangle$.

The second isomorphism can be proved in a similar way.
Observe that the condition $n\ge 3$ guarantees that $\Stab_G(n-1)\le G'\le K$,
so that it makes sense to write $K/K'\Stab_G(n-1)$.
Consider this time the homomorphism
\[
\begin{matrix}
\pi_n & : & K\times \overset{p}{\cdots} \times K & \longrightarrow
& K/K'\Stab_G(n-1)\times \overset{p-2}{\cdots} \times K/K'\Stab_G(n-1)
\\[5pt]
& & (g_1,\ldots,g_p) & \longmapsto & (g_1K'\Stab_G(n-1),\ldots,g_{p-2}K'\Stab_G(n-1)),
\end{matrix}
\]
and let $R_n$ be the composition of
$\psi:K'\longrightarrow K\times \overset{p}{\cdots} \times K$
with $\pi_n$.
Observe that the surjectiveness of $R$ already implies that $R_n$ is surjective.
Let us prove that $\ker R_n=L\Stab_G(n)\cap K'$.
The same proof as above, but using the last part of Lemma \ref{conditions to be in K'},
shows that
\begin{multline*}
\psi(\ker R_n)
=
( K'\Stab_G(n-1) \times \overset{p}{\cdots} \times K'\Stab_G(n-1) )
\cap \psi(K')
\\
=
( K' \times \overset{p}{\cdots} \times K' )
( \Stab_G(n-1) \times \overset{p}{\cdots} \times \Stab_G(n-1) )
\cap \psi(K').
\end{multline*}
Since $K' \times \overset{p}{\cdots} \times K' \subseteq \psi(K')$,
we can apply Dedekind's Law to get
\[
\psi(\ker R_n)
=
( K' \times \overset{p}{\cdots} \times K' )
\big( (\Stab_G(n-1) \times \overset{p}{\cdots} \times \Stab_G(n-1))
\cap \psi(K') \big).
\]
Now, since $n\ge 3$, we have
\begin{multline*}
( \Stab_G(n-1) \times \overset{p}{\cdots} \times \Stab_G(n-1) )
\cap \psi(K')
=
\psi(\Stab_G(n))\cap \psi(K')
\\
=
\psi(\Stab_G(n)\cap K'),
\end{multline*}
and it follows that
\begin{multline*}
\psi(\ker R_n)
=
( K' \times \overset{p}{\cdots} \times K' )
\psi(\Stab_G(n)\cap K')
=
\psi(L)
\psi(\Stab_G(n)\cap K')
\\
=
\psi(L(\Stab_G(n)\cap K')).
\end{multline*}
Hence
\[
\ker R_n = L(\Stab_G(n)\cap K') = L\Stab_G(n)\cap K',
\]
as claimed.

Now, we can readily obtain the desired isomorphism:
\begin{multline*}
K'\Stab_G(n)/L\Stab_G(n)
\cong
K'/(L\Stab_G(n)\cap K')
=
K'/\ker R_n
\\
\cong
R_n(K')
=
K/K'\Stab_G(n-1)\times \overset{p-2}{\cdots} \times K/K'\Stab_G(n-1).
\end{multline*}
\end{proof}

\begin{thm}
\label{quotient maximal class}
Let $G$ be a GGS-group with constant defining vector, and
let $K=\langle ba^{-1} \rangle^G$.
Then, for every $n\ge 2$, the quotient $G/K'\Stab_G(n)$ is a $p$-group of
maximal class of order $p^{n+1}$.
\end{thm}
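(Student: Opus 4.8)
Write $Q_n=G/K'\Stab_G(n)$. The plan is to separate the two claims: that $Q_n$ is of maximal class will be almost immediate, and the whole difficulty is the order $|Q_n|=p^{n+1}$, which I would prove by induction on $n\ge 2$. First note that $Q_n$ is of maximal class \emph{regardless of its order}. Indeed, $\Stab_G(n)\le\Stab_G(2)\le\gamma_3(G)\le K$ by Corollary \ref{Stab(2)} and Lemma \ref{G weakly regular branch}(i), so $K'\Stab_G(n)\le K$ and $K/K'\Stab_G(n)$ is a maximal subgroup of $Q_n$, which is abelian because $K/K'$ is. Moreover $K'\Stab_G(n)\le G'$ (as $K'\le G'$ and $\Stab_G(2)\le G'$ by Theorem \ref{Stab(1)}), so $Q_n/Q_n'\cong G/G'$ has order $p^2$. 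Lemma \ref{maximal class} then applies and gives that $Q_n$ is of maximal class. Since a $p$-group of maximal class of order $p^{n+1}$ has class exactly $n$, it suffices to prove $|Q_n|=p^{n+1}$.

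\textbf{Base case $n=2$.} Here $C(\mathbf e,0)=C(1,\dots,1,0)$ has rank $p$ by Lemma \ref{rank circulant} (its entries add up to $p-1\ne 0$ in $\F_p$), so by Theorem \ref{order of G2} the group $G_2=G/\Stab_G(2)$ is of maximal class of order $p^{p+1}$, with abelian maximal subgroup $\Stab_{G_2}(1)$. Set $K_2=K\Stab_G(2)/\Stab_G(2)$; it is a maximal subgroup of $G_2$ with $K_2\ne\Stab_{G_2}(1)$, since $ba^{-1}\notin\Stab_G(1)$. I would then use the following elementary fact: in any $p$-group $P$ of maximal class of order $\ge p^3$ that has an abelian maximal subgroup $A$, every maximal subgroup $M\ne A$ satisfies $M'=\gamma_3(P)$. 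For $\gamma_3(P)\le M'$, write $P=\langle M,A\rangle$, so $\gamma_3(P)=[\gamma_2(P),P]$ is the normal closure in $P$ of $[\gamma_2(P),M]\cup[\gamma_2(P),A]$; the second set is trivial because $\gamma_2(P)=P'\le A$ and $A$ is abelian, while $[\gamma_2(P),M]\le[M,M]=M'$ (again $\gamma_2(P)\le M$) and $M'\trianglelefteq P$, whence $\gamma_3(P)\le M'$. Conversely $M/\gamma_3(P)$ has order $p^2$, hence is abelian, so $M'\le\gamma_3(P)$. Applying this to $G_2$ gives $K_2'=\gamma_3(G_2)$, and therefore $|Q_2|=|G_2:\gamma_3(G_2)|=p^3$.

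\textbf{Inductive step.} Suppose $n\ge 3$ and $|Q_{n-1}|=p^n$; I want $|Q_n|=p^{n+1}$, i.e.\ $|K:K'\Stab_G(n)|=p^n$. I would interpose the normal subgroup $L\Stab_G(n)\le K'\Stab_G(n)$ and apply Theorem \ref{isomorphisms K}, which gives $|K'\Stab_G(m):L\Stab_G(m)|=|K:K'\Stab_G(m-1)|^{p-2}$ for $m\ge 3$; the same formula $p^{(m-1)(p-2)}$ also holds for $m=2$, which one checks directly after identifying the image of $\Stab_G(2)=\Stab_G(1)'$ (use that $C(\mathbf e,0)$ has full rank together with Theorems \ref{Stab(2,3)} and \ref{Stab(1)'}) in $K/K'$. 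Feeding in the inductive values $|K:K'\Stab_G(n-1)|=p^{n-1}$ and $|K:K'\Stab_G(n-2)|=p^{n-2}$, the chain
\[
|K:K'\Stab_G(n)|
=\frac{|K:K'\Stab_G(n-1)|\,|K'\Stab_G(n-1):L\Stab_G(n-1)|\,|L\Stab_G(n-1):L\Stab_G(n)|}{|K'\Stab_G(n):L\Stab_G(n)|}
\]
collapses to $|K:K'\Stab_G(n)|=p^{\,n+1-p}\cdot|L\Stab_G(n-1):L\Stab_G(n)|$, so the desired equality $|Q_n|=p^{n+1}$ becomes the single statement $|L\Stab_G(n-1):L\Stab_G(n)|=p^{p-1}$.

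\textbf{The main obstacle.} This last equality is where the hard analysis sits. By Dedekind's law, $L\Stab_G(n-1)/L\Stab_G(n)\cong\Stab_G(n-1)/\Stab_G(n)(L\cap\Stab_G(n-1))$, and since $\psi$ is injective on $\Stab_G(1)$, with $\psi^{-1}(\Stab_G(m-1)\times\overset{p}{\cdots}\times\Stab_G(m-1))\cap\Stab_G(1)=\Stab_G(m)$ and $L=\psi^{-1}(K'\times\overset{p}{\cdots}\times K')$, this index translates into an index inside $\psi(\Stab_G(1))$ involving $\Stab_G(n-2)^{\,p}$, $\Stab_G(n-1)^{\,p}$ and $(K'\cap\Stab_G(n-2))^{\,p}$. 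Evaluating it requires pinning down $\psi(\Stab_G(n))$ and $L\cap\Stab_G(n-1)$ precisely, and here one exploits that $C(\mathbf e,0)$ has full rank $p$ (so the exponent maps $m_i=(r_0\ \ldots\ r_{p-1})C^i$ are bijective and $\Stab_G(2)=\Stab_G(1)'$) together with the weakly branch structure $K'\times\overset{p}{\cdots}\times K'\subseteq\psi(K')\subseteq\psi(G')\subseteq K\times\overset{p}{\cdots}\times K$ of Lemma \ref{G weakly regular branch}. I expect controlling this layer $L\Stab_G(n-1)/L\Stab_G(n)$ to be the genuinely delicate point; once it is shown to have order $p^{p-1}$, the induction closes and $|Q_n|=p^{n+1}$, completing the proof.
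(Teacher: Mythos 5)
Your opening reduction is fine: the maximal class property of $Q_n=G/K'\Stab_G(n)$ via Lemma \ref{maximal class} is exactly the paper's first step, and your base case (identifying $K'\Stab_G(2)/\Stab_G(2)$ with $\gamma_3(G_2)$ through the observation that every maximal subgroup other than the abelian one has derived subgroup $\gamma_3$) is a correct and rather elegant alternative to the paper's direct computation of the class of $Q_2$. The problem is the inductive step. After collapsing the chain of indices via Theorem \ref{isomorphisms K} you arrive at the single statement $|L\Stab_G(n-1):L\Stab_G(n)|=p^{p-1}$, and there you stop: you explicitly defer it as "the genuinely delicate point". But that equality is where the entire content of the theorem sits. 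Given the induction hypothesis and Theorem \ref{isomorphisms K}, it is in fact \emph{equivalent} to the recursion $\log_p|G_n|=p\log_p|G_{n-1}|-n-p+3$, i.e.\ to the order formula for $G_n$ itself, which in the constant case is exactly what Theorem \ref{quotient maximal class} is needed to produce. Moreover, your proposed route for attacking it — translating the index into indices among $\Stab_G(n-2)^{\,p}$, $\Stab_G(n-1)^{\,p}$ and $(K'\cap\Stab_G(n-2))^{\,p}$ inside $\psi(\Stab_G(1))$ — runs into the obstacle that Lemma \ref{Stab n eta n-1} is only available for non-constant defining vectors: here $\psi(\Stab_G(n))$ is a \emph{proper} subgroup of $\Stab_G(n-1)\times\overset{p}{\cdots}\times\Stab_G(n-1)$ of a priori unknown index, so the translation does not pin anything down. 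This is a genuine missing idea, not a routine verification.

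For comparison, the paper closes the induction by a completely different mechanism that avoids this index altogether: since $Q_n$ is of maximal class, $|Q_n|=p^{n+1}$ amounts to $Q_n$ having nilpotency class exactly $n$, and each section $\gamma_i(Q_n)/\gamma_{i+1}(Q_n)$ is generated by the image of the single commutator $[b,a,b,\ldots,b]$. The upper bound on the class comes from the identity $\psi([b,a,b,\overset{n-1}{\ldots},b])=([b,a,\overset{n-1}{\ldots},a],1,\ldots,1,[a,b,\overset{n-1}{\ldots},b])$ combined with the induction hypothesis and a Dedekind-law computation placing this element in $K'\Stab_G(n)$; the lower bound comes from the homomorphisms $P$ and $Q$ of Lemma \ref{conditions to be in K'}, which show that if $[b,a,b,\overset{n-2}{\ldots},b]$ were trivial in $Q_n$ then $[b,a,\overset{n-2}{\ldots},a]$ would be trivial in $Q_{n-1}$, contradicting the induction hypothesis. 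If you want to keep your index-theoretic framework, you would need an independent argument for $|L\Stab_G(n-1):L\Stab_G(n)|=p^{p-1}$, and I do not see how to obtain one without essentially redoing this commutator analysis.
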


\begin{proof}
For simplicity, let us write $T_n=K'\Stab_G(n)$, $Q_n=G/T_n$ and
$A_n=K/T_n$ (take into account that $\Stab_G(2)\le G'\le K$).
Since $|Q_n:Q_n'|=|G:G'|=p^2$ and $A_n$ is an abelian maximal subgroup of $Q_n$,
it follows from Lemma \ref{maximal class} that $Q_n$ is a $p$-group
of maximal class.
As a consequence, if we want to prove that $|Q_n|=p^{n+1}$, it suffices to
see that the nilpotency class of $Q_n$ is $n$.

We need an auxiliary result.
Let $\{x_i\}_{i\ge 1}$ be a sequence of elements of $G$ such that
$\{x_1,x_2\}=\{a,b\}$ and $x_i\in\{a,b\}$ for every $i\ge 3$.
We claim that, for every $i\ge 2$, the section
$\gamma_i(Q_n)/\gamma_{i+1}(Q_n)$ is generated by the image of the
commutator $[x_1,x_2,\ldots,x_i]$.
We argue by induction on $i$.
If $i=2$ then we have to show that the image of $[a,b]$ generates
$\gamma_2(Q_n)/\gamma_3(Q_n)$.
This follows immediately from (i) in Lemma \ref{maximal class}, since
$[a,b]=[a,a^{-1}b]$, where $bT_n\in Q_n\smallsetminus A_n$ and
$a^{-1}bT_n=(ba^{-1}T_n)^a\in A_n\smallsetminus \gamma_2(Q_n)$.
Now, if we assume that the result holds for $i-1$, we get it for $i$
by using (ii) of Lemma \ref{maximal class}.

Let us now prove that the class of $Q_n$ is $n$, by induction on $n$.
Assume first that $n=2$.
We have
\[
\psi([b,a]) = (a^{-1}b,1,\ldots,1,b^{-1}a)
\]
and
\[
\psi([b,a,b]) = ([a^{-1}b,a],1,\ldots,1,[b^{-1}a,b])
= ([b,a],1,\ldots,1,[a,b]),
\]
so that $[b,a,b]\in\Stab_G(2)$.
It follows that the image of $[b,a,b]$ in $Q_2$ is trivial.
By the previous paragraph, we necessarily have $\gamma_3(Q_2)=\gamma_4(Q_2)$.
Hence $\gamma_3(Q_2)=1$, and the class of $Q_2$ is at most $2$.
If $Q_2$ is of class $1$, then $[b,a]\in K'\Stab_G(2)$ and, by
Lemma \ref{conditions to be in K'}, $a^{-1}b\in K'\Stab_G(1)$.
Hence $a^{-1}\in\Stab_G(1)$, which is a contradiction.
Thus $Q_2$ is of class $2$.

Now we assume the result for $n-1$, and we prove it for $n$.
We have
\[
\psi([b,a,b,\overset{n-1}{\ldots},b])
=
([b,a,\overset{n-1}{\ldots},a],1,\ldots,1,[a,b,\overset{n-1}{\ldots},b]),
\]
and
\[
[b,a,\overset{n-1}{\ldots},a],[a,b,\overset{n-1}{\ldots},b]
\in
K'\Stab_G(n-1),
\]
since $Q_{n-1}$ has class $n-1$ by the induction hypothesis.
Thus
\begin{equation}
\label{psi of long commutator}
\psi([b,a,b,\overset{n-1}{\ldots},b])
\in
K'\Stab_G(n-1)\times \overset{p}{\cdots} \times K'\Stab_G(n-1).
\end{equation}
Now,
\begin{align*}
(K'
\Stab_G&(n-1)
\times \overset{p}{\cdots} \times K'\Stab_G(n-1))
\cap \psi(G)
\\
&=
(K'\times \overset{p}{\cdots} \times K')
(\Stab_G(n-1)\times \overset{p}{\cdots} \times \Stab_G(n-1))
\cap \psi(G)
\\
&\subseteq
\psi(K')(\Stab_G(n-1)\times \overset{p}{\cdots} \times \Stab_G(n-1))
\cap \psi(G)
\\
&=
\psi(K')(\Stab_G(n-1)\times \overset{p}{\cdots} \times \Stab_G(n-1)
\cap \psi(G))
\\
&=
\psi(K')\psi(\Stab_G(n))
=
\psi(K'\Stab_G(n)).
\end{align*}
It follows that $[b,a,b,\overset{n-1}{\ldots},b]\in K'\Stab_G(n)$,
and so this commutator becomes trivial in $Q_n$.
Since the image of this commutator generates the quotient
$\gamma_{n+1}(Q_n)/\gamma_{n+2}(Q_n)$, we have $\gamma_{n+1}(Q_n)=1$.
Hence the class of $Q_n$ is at most $n$.

If $Q_n$ has class strictly less than $n$, then since the image of
$[b,a,b,\overset{n-2}{\ldots},b]$ generates $\gamma_n(Q_n)/\gamma_{n+1}(Q_n)$,
it follows that
\[
[b,a,b,\overset{n-2}{\ldots},b]\in K'\Stab_G(n).
\]
Since
\[
\psi([b,a,b,\overset{n-2}{\ldots},b])
=
([b,a,\overset{n-2}{\ldots},a],1,\ldots,1,[a,b,\overset{n-2}{\ldots},b]),
\]
it follows from Lemma \ref{conditions to be in K'} that
\[
[b,a,\overset{n-2}{\ldots},a]\in K'\Stab_G(n-1).
\]
This is a contradiction, since $Q_{n-1}$ is of class $n-1$, and
$\gamma_{n-1}(Q_{n-1})/\gamma_n(Q_{n-1})$ is generated by the image of
$[b,a,\overset{n-2}{\ldots},a]$.
Thus we conclude that the nilpotency class of $Q_n$ is $n$, which
completes the proof of the theorem.
\end{proof}

\begin{thm}
\label{symmetric}
Let $G$ be a GGS-group with a constant defining vector.
Then
\[
\log_p |G_n| = p^{n-1}+1-\frac{p^{n-2}-1}{p-1}-\frac{p^{n-2}-(n-2)p+n-3}{(p-1)^2},
\]
for every $n\ge 2$, and
\[
\dim_{\Gamma} \overline G = \frac{p-2}{p-1}.
\]
\end{thm}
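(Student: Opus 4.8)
The plan is to compute $|G_n|$ by reducing to the orders of the quotients $K'\Stab_G(n)/\Stab_G(n)$, setting up a recursion for these using Theorems~\ref{isomorphisms K} and~\ref{quotient maximal class}, solving it, and then reading off the Hausdorff dimension from~(\ref{dim-formula}). As throughout this section, we assume $\mathbf{e}=(1,\ldots,1)$, and we keep the notation $K=\langle ba^{-1}\rangle^G$ and $L=\psi^{-1}(K'\times\overset{p}{\cdots}\times K')$ of the preceding results. Since the first row of $C(\mathbf{e},0)=C(1,\ldots,1,0)$ has entries summing to $p-1\equiv-1\ne0$ in $\F_p$, Lemma~\ref{rank circulant} gives $\rk C(\mathbf{e},0)=p$, and therefore $|G_2|=p^{p+1}$ by Theorem~\ref{order of G2}.

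Write $k_n=\log_p|K'\Stab_G(n)/\Stab_G(n)|$. By Theorem~\ref{quotient maximal class} we have $|G:K'\Stab_G(n)|=p^{n+1}$ for every $n\ge2$, so
\[
\log_p|G_n| = (n+1) + k_n,
\]
and combined with $|G_2|=p^{p+1}$ this gives the starting value $k_2=p-2$. Thus everything reduces to determining $k_n$.

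The next step is to prove the recursion $k_n=(p-2)(n-1)+p\,k_{n-1}$ for $n\ge3$. Since $L\le K'$, inside $K'\Stab_G(n)/\Stab_G(n)$ we have the subgroup $L\Stab_G(n)/\Stab_G(n)$ with quotient $K'\Stab_G(n)/L\Stab_G(n)$, and we compute the order of each. For the quotient, Theorem~\ref{isomorphisms K} gives
\[
K'\Stab_G(n)/L\Stab_G(n) \cong K/K'\Stab_G(n-1) \times \overset{p-2}{\cdots} \times K/K'\Stab_G(n-1);
\]
since $|G:K|=p$ by Lemma~\ref{G weakly regular branch} and $|G:K'\Stab_G(n-1)|=p^{n}$ by Theorem~\ref{quotient maximal class}, this has order $p^{(p-2)(n-1)}$. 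For the subgroup, we use that $L\le\Stab_G(1)$ and that $\psi$ carries $L$ isomorphically onto $K'\times\overset{p}{\cdots}\times K'$; by~(\ref{psi on kth stabilizer}) and the fact that an element of $\Stab_G(1)$ lies in $\Stab_G(n)$ as soon as all of its first-level sections lie in $\Stab_G(n-1)$, it follows that $\psi$ maps $L\cap\Stab_G(n)$ exactly onto $\Stab_{K'}(n-1)\times\overset{p}{\cdots}\times\Stab_{K'}(n-1)$, so
\[
L\Stab_G(n)/\Stab_G(n)\cong \bigl(K'/\Stab_{K'}(n-1)\bigr)\times\overset{p}{\cdots}\times\bigl(K'/\Stab_{K'}(n-1)\bigr)
\]
has order $p^{p\,k_{n-1}}$. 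Adding the exponents yields the recursion. This identification of $L\cap\Stab_G(n)$ under $\psi$ is the one point needing genuine care; the rest is bookkeeping resting on Theorems~\ref{isomorphisms K} and~\ref{quotient maximal class}, which is where the real work was done.

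It then remains to solve the recursion with $k_2=p-2$. Looking for a solution of the form $k_n=Cp^n+an+b$, one obtains $a=-\tfrac{p-2}{p-1}$, $b=-\tfrac{p-2}{(p-1)^2}$, and $C=\tfrac{p-2}{(p-1)^2}$, so that
\[
k_n = \frac{(p-2)\bigl(p^n-n(p-1)-1\bigr)}{(p-1)^2}, \qquad n\ge2.
\]
Substituting into $\log_p|G_n|=(n+1)+k_n$ and simplifying over the common denominator $(p-1)^2$ gives precisely the stated formula. Finally, since $k_n/p^n\to(p-2)/(p-1)^2$ and $(n+1)/p^n\to0$ as $n\to\infty$, formula~(\ref{dim-formula}) gives
\[
\dim_{\Gamma}\overline G = (p-1)\lim_{n\to\infty}\frac{\log_p|G_n|}{p^n} = (p-1)\cdot\frac{p-2}{(p-1)^2} = \frac{p-2}{p-1},
\]
as claimed.
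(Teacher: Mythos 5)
Your proposal is correct and follows essentially the same route as the paper: both decompose $|G_n|$ through the chain $\Stab_G(n)\le L\Stab_G(n)\le K'\Stab_G(n)\le G$, evaluate the three factors via Theorem \ref{quotient maximal class}, Theorem \ref{isomorphisms K}, and the identification $\psi(L\cap\Stab_G(n))=\Stab_{K'}(n-1)\times\overset{p}{\cdots}\times\Stab_{K'}(n-1)$, and arrive at the same recursion. The only (cosmetic) difference is that you isolate $k_n=\log_p|K'\Stab_G(n):\Stab_G(n)|$ and solve its recurrence in closed form, whereas the paper writes the recursion directly for $\log_p|G_n|$ and invokes induction.
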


\begin{proof}
As on previous occasions, the formula for the Hausdorff dimension of
$\overline G$ is immediate once we obtain $\log_p |G_n|$.
For that purpose, we argue by induction on $n$.
If $n=2$, then by Theorem \ref{order of G2}, we have $\log_p |G_2|=t+1$,
where $t$ is the rank of the matrix $C=C(1,\overset{p-1}{\ldots},1,0)$.
By Lemma \ref{rank circulant}, $p-t$ is the multiplicity of $1$ as a root
in $\F_p$ of the polynomial $X^{p-2}+\cdots+X+1$.
Thus $t=p$ and $\log_p |G_2|=p+1$, as desired.

Assume now that $n\ge 3$.
Let $K=\langle ba^{-1} \rangle^G$, and
$L=\psi^{-1}(K'\times \overset{p}{\cdots} \times K')$.
Then we have the following decomposition of the order of $G_n$:
\begin{equation}
\label{decomposition |Gn|}
|G_n|
=
|G:K'\Stab_G(n)||K'\Stab_G(n):L\Stab_G(n)|
|L\Stab_G(n):\Stab_G(n)|.
\end{equation}
By Theorem \ref{quotient maximal class}, we know that
$|G:K'\Stab_G(n)|=p^{n+1}$.
On the other hand, since
\[
K'\Stab_G(n)/L\Stab_G(n)
\cong
K/K'\Stab_G(n-1) \times \overset{p-2}{\cdots} \times K/K'\Stab_G(n-1)
\]
by Theorem \ref{isomorphisms K}, and since $|K/K'\Stab_G(n-1)|=p^{n-1}$
(again by Theorem \ref{quotient maximal class}), it follows that
\[
|K'\Stab_G(n):L\Stab_G(n)|
=
p^{(n-1)(p-2)}.
\]
Finally,
\begin{align*}
|L\Stab_G(n)
&:\Stab_G(n)|
=
|L:\Stab_L(n)|
=
|\psi(L):\psi(\Stab_L(n))|
\\
&=
|K'\times \overset{p}{\cdots} \times K':
\Stab_{K'}(n-1)\times \overset{p}{\cdots} \times \Stab_{K'}(n-1)|
\\
&=
|K':\Stab_{K'}(n-1)|^p
=
|K'\Stab_G(n-1):\Stab_G(n-1)|^p
\\
&=
|G/\Stab_G(n-1)|^p / |G/K'\Stab_G(n-1)|^p
\\
&=
|G_{n-1}|^p p^{-np}.
\end{align*}
Now, from (\ref{decomposition |Gn|}) we get
\begin{multline*}
\log_p |G_n| = p\log_p |G_{n-1}| +n+1+(n-1)(p-2)-np
\\
=
p\log_p |G_{n-1}| -n-p+3,
\end{multline*}
and the result follows by applying the induction hypothesis to $G_{n-1}$.
\end{proof}

\end{document}